    \newtheorem{thm}{Theorem}[section]
    \newtheorem{cor}[thm]{Corollary}
    \newtheorem{prop}[thm]{Proposition}
    \newtheorem{lem}[thm]{Lemma}
    \theoremstyle{definition}
    \newtheorem{defn}[thm]{Definition}
    \newtheorem{defns}[thm]{Definitions}
    \theoremstyle{remark}
    \newtheorem{rem}[thm]{Remark}
    \newcommand{\Z}{\mathbb{Z}}
    \newcommand{\C}{\mathbb{C}}
    \newcommand{\pP}{\mathbb{P}}
    \newcommand{\cO}{\mathcal{O}}
    \newcommand{\Hom}{\mathrm{Hom}}
    \newcommand{\Ext}{\mathrm{Ext}}
    \newcommand*{\rom}[1]{\expandafter\@slowromancap\romannumeral #1@}
    \let\c@equation\c@thm
    \numberwithin{equation}{section}
\begin{document}

\title{Compactification of the moduli space of minimal instantons on the Fano 3-fold $V_4$}

\author{Xuqiang QIN}
\address{Department of Mathematics, Indiana University, 831 E. Third St., Bloomington, IN 47405, USA}
\email{qinx@iu.edu}

\subjclass[2010]{Primary 14J10, 14J30, 14F05; Secondary 14H60}

\begin{abstract}
    We study semistable sheaves of rank $2$ with Chern classes $c_1=0$, $c_2=2$ and $c_3=0$ on the Fano threefold $V_4$ of Picard number $1$, degree $4$ and index $2$. We show that the moduli space of such sheaves is isomorphic to the moduli space of semistable rank $2$, degree $0$ vector bundles on a genus $2$ curve. This also provides a natural smooth compactification of the moduli space of Ulrich bundles of rank $2$ on $V_4$.
\end{abstract}
    
\maketitle
\section{Introduction}

Instanton bundles first appeared in \cite{AHDM}  as a way to describe Yang-Mills instantons on a 4-sphere $S^4$. They provide extremely useful links between mathematical physics and algebraic geometry. The notion of mathematical instanton bundle was first introduced on $\pP^3$. By definition a mathematical instanton of charge $n$ is a stable rank $2$ vector bundle with on $\pP^3$ with Chern classes $c_1(E)=0, c_2(E)=n$, satisfying the instantonic vanishing condition:
    \begin{align*}
    h^1(E(-2))=0.
    \end{align*}
Since then the irreducibility \cite{T} and smoothness \cite{JV} of their moduli space were heavily investigated. Faenzi \cite{Fa} and Kuznetsov \cite{Ku2} generalized this notion to Fano threefolds, we recall
\begin{defn}\cite{Ku2}
    Let $Y$ be a Fano threefold of index $2$. An \emph{instanton bundle of charge $n$} on $Y$ is a stable vector bundle $E$ of rank $2$ with $c_1(E)=0, c_2(E)=n$, enjoying the instantonic condition: 
    \begin{align*}
        H^1(Y,E(-1))=0.
    \end{align*}
\end{defn}
We mention that the charge $c_2(E)\geq 2$ \cite[Corollary 3.2]{Ku2}. The instanton bundles of charge $2$ are called the \emph{minimal instantons}.\\

 In this paper, we will be concerned with minimal instantons and natural compactification of their moduli on the Fano threefold of Picard rank $1$, index $2$ and degree $4$, which we denote by $V_4$. Such a threefold is an intersection of two quadrics in $\pP^5$. The moduli space of minimal instanton bundles on $V_4$ was discussed in \cite{Ku2} and it was shown to be an open subscheme of the moduli space of rank $2$ even degree bundles on a genus $2$ curve $C$ which is naturally associated to $V_4$ (see \cite[Theorem 5.10]{Ku2}).\\

On the other hand, Ulrich bundles are defined as vector bundles on a smooth projective variety $X$ of dimension $d$ such that 
\begin{align*}
        H^*(X,E(-t))=0
    \end{align*}
for all $t=1,\ldots,d$. They first appeared in commutative algebra and entered the world of algebraic geometry via \cite{ES}. The existence and the moduli space of Ulrich bundles provide great amount of information about the original variety. For example, in the case when $X$ is a smooth hypersurface, the existence of Ulrich bundles is equivalent to the fact that $X$ can be defined set-theoretically by a linear determinant \cite{Beau}. Inspired by \cite{Ku2}, Lahoz, Macr\`i and Stellari \cite{LMS}\cite{LMS2}, studied moduli spaces of Ulrich bundles on cubic threefolds and fourfolds using derived categories. In a recent paper \cite{CKL}, the moduli space of stable Ulrich bundles of rank $r$ was shown to be an open subscheme of the moduli space of rank $r$ degree $2r$ vector bundles on $C$.
We will see that on $V_4$, the minimal instanton bundles and the Ulrich bundles of rank $2$ differ only by a twist by $\cO_{V_4}(1)$. Thus they share the same moduli space and compactifications.\\

Our first result states that the stable rank 2 vector bundles on $V_4$ with Chern classes $c_1=0,c_2=2$ automatically satisfy the instantonic vanishing condition, and hence are minimal instantons.
\begin{thm}
    Let $E$ be a stable rank $2$ vector bundle on $V_4$, with $c_1(E)=0$ and $c_2(E)=2$, then $H^1(V_4,E(n))=0$ for all $n\in \Z$. In particular, $E$ is a minimal instanton bundle.
\end{thm}

In light of this result, we consider the moduli space of semistable rank $2$ sheaves with Chern classes $c_1=0$, $c_2=2$  and $c_3=0$ on $V_4$ as a natural compactification of the moduli space of minimal instanton bundles.\ 

We also mention that similar phenomenon was observed on cubic threefolds (See \cite[Theorem 2.4]{D}), but the proof used properties of the cubic.\

Our next result is the classification of semistable rank $2$ sheaves with Chern classes $c_1=0$, $c_2=2$  and $c_3=0$ on $V_4$. \cite{D} classified sheaves with same numerical conditions on cubic threefolds and proved that their moduli space is isomorphic to the blow-up of the intermediate Jacobian in the Fano surface of lines. We follow his method and prove that a parallel classification happens on $V_4$.
\begin{thm}
     Let $E$ be a semistable rank $2$ sheaf with Chern classes $c_1=0$, $c_2=2$  and $c_3=0$ on $V_4$. If $E$ is stable, then either $E$ is locally free or $E$ is associated to a smooth conic $Y\subset V_4$ such that we have the exact sequence:
     \begin{align*}
         0\to E\to H^0(\theta(1))\otimes \cO_{V_4}\to \theta(1)\to 0,
     \end{align*}
     where $\theta$ is the theta-characteristic of $Y$.\\
         If $E$ is strictly semistable, then  $E$ is the extension of two ideal sheaves of lines.
\end{thm}
Recall that a theta-characteristic of a non-singular curve $Y$ is a line bundle $L$ such that $L^{\otimes 2}$ is the canonical bundle. In the case when $Y$ is a smooth conic, $Y\simeq \pP^1$ and a theta-characteristic is just the negative of the ample generator in the Picard group of $Y$.

Unfortunately, the method to study the moduli space in \cite{D} does not transfer well to $V_4$. However, we note that $\mathcal{D}^b(V_4)$ has a semi-orthogonal decomposition:
\begin{align*}
    \mathcal{D}^b(V_4)=\langle \mathcal{B}_{V_4},\cO_{V_4},\cO_{V_4}(1)\rangle
\end{align*}
There is a genus $2$ smooth curve $C$ naturally associated to $V_4$ such that there is a natural choice of equivalence of triangulated categories  $\Phi:\mathcal{D}^b(C)\simeq \mathcal{B}_{V_4}$ (see Section 2.2 for the precise definition). This functor is Fourier-Mukai by Orlov's result. Our second result connects semistable rank $2$ sheaves with Chern classes $c_1=0$, $c_2=2$  and $c_3=0$ with rank $2$ bundles on $C$.
\begin{thm}
    Let $E$ be a semistable rank $2$ sheaf with Chern classes $c_1=0$, $c_2=2$  and $c_3=0$ on $V_4$, then $E\in \mathcal{B}_{V_4}$ and $\Phi^*(E)[-1]$ is a rank $2$ semistable vector bundle of degree $0$ on $C$.
    Moreover, if $E$ is stable (strictly semistable), then the vector bundle $\Phi^*(E)[-1]$ is stable (strictly semistable).
\end{thm}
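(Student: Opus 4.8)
The plan is to verify the three assertions in turn: first that $E \in \mathcal{B}_{V_4}$, then that $\Phi^*(E)[-1]$ is a rank $2$ degree $0$ bundle, and finally that stability is matched on both sides.

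For membership in $\mathcal{B}_{V_4}$, I would use the semiorthogonal decomposition $\mathcal{D}^b(V_4) = \langle \mathcal{B}_{V_4}, \cO_{V_4}, \cO_{V_4}(1)\rangle$. By definition $\mathcal{B}_{V_4}$ is the right orthogonal to the exceptional pair $(\cO_{V_4}, \cO_{V_4}(1))$, so it suffices to show $\mathrm{RHom}(\cO_{V_4}, E) = 0$ and $\mathrm{RHom}(\cO_{V_4}(1), E) = 0$, i.e. $H^*(V_4, E) = 0$ and $H^*(V_4, E(-1)) = 0$. The vanishing $H^1(V_4, E(n)) = 0$ for all $n$ is already in hand from Theorem 1.2 in the locally free case, and for the non-locally-free stable case and the strictly semistable case this should follow from the explicit description in Theorem 1.4 (using the defining sequences there and the cohomology of the theta-characteristic / ideal sheaves of lines). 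The remaining groups $H^0, H^2, H^3$ of $E$ and $E(-1)$ I would compute from semistability (which kills $H^0$ of $E$ and $E(-1)$ since their slopes are $\leq 0$ or $< 0$ and there are no global sections of a semistable sheaf of negative slope, and $E$ itself having $c_1 = 0$ forces $H^0 = 0$ by stability), together with Serre duality $H^i(E(n))^\vee \cong \mathrm{Ext}^{3-i}(E(n), \cO_{V_4}(-2)) = H^{3-i}(E^\vee(-2-n))$ using $\omega_{V_4} = \cO_{V_4}(-2)$ and, for bundles, $E^\vee \cong E$. This reduces everything to Chern-class/Riemann–Roch bookkeeping plus the established $H^1$-vanishing.

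Once $E \in \mathcal{B}_{V_4}$, write $F := \Phi^*(E)[-1] \in \mathcal{D}^b(C)$. To see $F$ is a shifted sheaf (i.e. concentrated in a single degree) and to pin down its rank and degree, I would compute the Mukai/Chern invariants: $\Phi$ (and hence $\Phi^*$) is Fourier–Mukai, so it acts on the numerical Grothendieck group / Chern characters by a fixed correspondence, and plugging $\mathrm{ch}(E) = (2, 0, -2H^2/\text{something}, 0)$ through it yields $\mathrm{ch}(F) = (r, d)$ on the curve; the normalization in Section 2.2 of the kernel should be arranged precisely so that this comes out $(2, 0)$. To upgrade "correct numerical class" to "genuine vector bundle", I would show $F$ is concentrated in degree $0$: this amounts to $\mathrm{Hom}(F, F') = 0$ for shifts, or more concretely to checking $H^{\neq 0}$ of $F$ after applying $\Phi^*$ to the explicit resolutions of $E$ from Theorem 1.4 and tracking cohomology sheaves; and a torsion-free sheaf of rank $2$ degree $0$ on a smooth curve is automatically locally free.

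For the stability matching, the key point is that $\Phi$ is an equivalence, so it induces an isomorphism on $\mathrm{Hom}$ and $\mathrm{Ext}$ groups between $E$ and its subobjects/quotients. A destabilizing sub-bundle $L \subset F$ of rank $1$ and degree $\geq 0$ (resp. $> 0$ for strict) would, via $\Phi$, produce a nonzero map in $\mathcal{B}_{V_4}$ from (or to) $E$ violating semistability of $E$; conversely a destabilizing subsheaf of $E$ transports to one of $F$. The cleanest route is probably to use that $\Phi[1]$ takes the heart of a suitable (tilted) $t$-structure on $\mathcal{D}^b(C)$ — in which the vector bundles sit — to a heart on $\mathcal{B}_{V_4}$ compatible with the abelian category of sheaves on $V_4$, so that it is exact on the relevant short exact sequences; then (semi)stable objects correspond because slopes can be read off from numerical classes, which $\Phi$ preserves up to the fixed linear transformation computed above. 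I expect the main obstacle to be precisely this last step: establishing that $\Phi$ (after the shift) is *$t$-exact* for the relevant hearts, or at least exact on the short exact sequences that arise from destabilizing subobjects, so that the notions of subobject on the two sides genuinely match rather than merely the numerical invariants. Controlling this — likely by combining the explicit descriptions in Theorem 1.4 with the fact that $\Phi^*(E)[-1]$ has already been shown to be a sheaf — will be the crux.
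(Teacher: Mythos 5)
Your overall skeleton --- reduce to the three cases of the classification theorem, check $E\in\mathcal{B}_{V_4}$ by cohomology vanishing, then push $E$ through $\Phi^*(-)[-1]$ --- is the same as the paper's, and the first step (membership in $\mathcal{B}_{V_4}$) is essentially correct as you describe it. But there are two places where the proposal does not close. First, knowing that the numerical class of $F=\Phi^*(E)[-1]$ is $(2,0)$ says nothing about $F$ being a sheaf; ``tracking cohomology sheaves'' has to be made concrete. In the locally free case the paper simply invokes Kuznetsov's Theorem 5.10, which you never mention; in the conic case it computes the derived fibers of the Fourier--Mukai transform at every point $x\in C$, reducing to $H^{\bullet}(\pP^1,\mathcal{S}_x|_Y\otimes\cO_{\pP^1}(1))$ and using global generation of $\mathcal{S}_x^*|_Y$ to conclude each fiber is $\C^2$ in degree $0$; this single computation gives concentration, local freeness and the rank all at once. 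Without some such input your argument does not produce a vector bundle.

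Second, and more seriously, the stability transfer is exactly where you stall, and the $t$-exactness route you propose is not the right tool: a destabilizing sub-line-bundle of $F$ of \emph{positive} degree does not transport to a subsheaf of $E$ under $\Phi[1]$ in any evident way, so ``slopes are preserved by the numerical correspondence'' does not yield semistability. The paper avoids this entirely. In the locally free case, semistability of $F$ comes from Kuznetsov's orthogonality conditions $\Hom(F,\mathcal{R})=\Ext^1(F,\mathcal{R})=0$ against the Raynaud bundle together with Faltings' criterion (Lemma 2.11). In the conic case, non-stability of a rank $2$ degree $0$ bundle is equivalent to the existence of a nonzero map $F\to\mathcal{L}$ with $\deg\mathcal{L}=0$, and adjunction converts this to $\Ext^1(\theta(1),I_l)$ on $V_4$, which is computed to vanish directly --- no $t$-structure comparison needed. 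In the strictly semistable case (in both directions, which is needed for the ``moreover'' clause) the argument rests on the identification $\Phi^{-1}(I_l[-1])\in\mathrm{Pic}^0(C)$ from Theorem 2.5: a Jordan--H\"older filtration of $F$ by degree $0$ line bundles transports to an extension of ideal sheaves of lines and conversely. These three concrete mechanisms are the content of the proof, and none of them appears in your proposal.
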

Using this relation, we construct a morphism between the moduli space $M_{V_4}$ of semistable rank $2$ sheaves with Chern classes $c_1=0$, $c_2=2$  and $c_3=0$ on $V_4$ and the moduli space $M_C$ of semistable vector bundles on $C$ of rank $2$ and degree $0$ and prove it is an isomorphism.
\begin{thm}
    There exists a morphism $\psi \colon M_{V_4}\to M_C$ which is an isomorphism.   As a result, the moduli space of semistable rank $2$ sheaves with Chern classes $c_1=0$, $c_2=2$  and $c_3=0$ on $V_4$ is a $\pP^3$-bundle over the Jacobian of $C$, thus a smooth projective variety of dimension $5$.
\end{thm}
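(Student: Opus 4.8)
The plan is to obtain $\psi$ by transforming families of instanton sheaves through the Fourier--Mukai equivalence, and then to recognise $\psi$ as an isomorphism by showing it is finite and birational onto the smooth (hence normal) variety $M$. For the construction, let $\Phi^{*}$ denote the Fourier--Mukai quasi-inverse of $\Phi$; being of Fourier--Mukai type it has a relative version $\Phi^{*}_{S}\colon\mathcal{D}^b(V_4\times S)\to\mathcal{D}^b(C\times S)$ for every base scheme $S$, compatible with derived base change. If $\mathcal{E}$ is a flat family of instanton sheaves on $V_4$ over $S$, then each fibre $\mathcal{E}_s$ lies in $\mathcal{B}_{V_4}$, so $R\Hom(\cO_{V_4}(i),\mathcal{E}_s)=0$ for $i=0,1$; by cohomology and base change $\mathcal{E}$ lies in the subcategory of $\mathcal{D}^b(V_4\times S)$ cut out by the base-changed semiorthogonal decomposition, and $\mathcal{G}:=\Phi^{*}_{S}(\mathcal{E})[-1]$ satisfies $\mathcal{G}|_{C\times\{s\}}\cong\Phi^{*}(\mathcal{E}_s)[-1]$, which is a rank $2$ semistable bundle of degree $0$ on $C$. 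Since a complex all of whose derived fibres are sheaves is itself a sheaf flat over the base, $\mathcal{G}$ is a flat family of semistable rank $2$ degree $0$ bundles on $C$; applied to the tautological family over the parameter space in the GIT construction of $M^{inst}$, this yields a $G$-invariant classifying morphism to the corepresenting space $M$, descending to $\psi\colon M^{inst}\to M$ with $\psi([E])=[\Phi^{*}(E)[-1]]$ on closed points.

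\emph{$\psi$ is finite.} The space $M^{inst}$ is projective, being a moduli space of Gieseker-semistable sheaves with fixed Chern character on the polarised threefold $(V_4,\cO_{V_4}(1))$, so $\psi$ is proper. It is injective on closed points: if $E_1,E_2$ are stable, their transforms are stable bundles, so an $S$-equivalence of transforms is an isomorphism, and applying $\Phi$ gives $E_1\cong E_2$; if $E_i$ is strictly semistable it is, by the classification of instanton sheaves, an extension of two ideal sheaves of lines $\mathcal{I}_{L_1^{i}},\mathcal{I}_{L_2^{i}}$, so its $S$-equivalence class is the unordered pair $\{\mathcal{I}_{L_1^{i}},\mathcal{I}_{L_2^{i}}\}$; each $\mathcal{I}_L$ lies in $\mathcal{B}_{V_4}$ (a short computation from $0\to\mathcal{I}_L\to\cO_{V_4}\to\cO_L\to0$ and its twist by $\cO_{V_4}(-1)$), $\Phi^{*}(\mathcal{I}_L)[-1]$ is a degree $0$ line bundle on $C$ (a Jordan--Hölder factor of the transform of $\mathcal{I}_L\oplus\mathcal{I}_{L'}$), and $L\mapsto\Phi^{*}(\mathcal{I}_L)[-1]$ is injective because $\Phi^{*}$ is an equivalence, so the exact triangle attached to the extension transforms to a short exact sequence of the corresponding line bundles and $\psi([E_i])$ determines $\{L_1^{i},L_2^{i}\}$. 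A stable and a strictly semistable instanton sheaf have distinct images because stability is preserved. Hence $\psi$ is proper and quasi-finite, so finite.

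\emph{$\psi$ is an isomorphism.} By \cite[Theorem~5.10]{Ku2}, whose comparison morphism is built from the same equivalence $\Phi$, the restriction of $\psi$ to the open locus $M^{inst}_{\mathrm{lf}}$ of locally free points (the minimal instanton bundles) is an open immersion onto an open subscheme $U\subseteq M$; as $U$ has dimension $5=\dim M$ and $M$ is irreducible, $U$ is dense, so $\psi$ is dominant, hence (being finite onto the irreducible $M$) surjective, $M^{inst}$ is irreducible, and $\psi$ is birational. Since $M=M_C(2,0)$ is smooth, and therefore normal, Zariski's Main Theorem shows $\psi$ is an isomorphism. The last assertion is then the classical description of $M_C(2,0)$ for a genus $2$ curve: the determinant morphism $M_C(2,0)\to\mathrm{Jac}(C)$ has all fibres isomorphic to $SU_C(2)\cong\pP^3$ (Narasimhan--Ramanan), so $M^{inst}\cong M$ is a $\pP^3$-bundle over the abelian surface $\mathrm{Jac}(C)$, a smooth projective variety of dimension $3+2=5$.

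\emph{Main obstacle.} The technical core is the family-theoretic step: that the relative transform of a flat family of instanton sheaves is again flat with semistable fibres. This reduces, via the base-change lemma for complexes with sheaf fibres, to the fibrewise Fourier--Mukai correspondence established earlier; one must be careful that $\Phi^{*}[-1]$ is $t$-exact on all the sheaves that occur, so that Jordan--Hölder factors are transported correctly in the strictly semistable stratum, and that the restriction of $\psi$ to $M^{inst}_{\mathrm{lf}}$ genuinely coincides with Kuznetsov's open immersion. One could instead build the inverse directly by $[F]\mapsto[\Phi(F[1])]$, but that requires the converse of the Fourier--Mukai correspondence — that $\Phi(F[1])$ is a semistable sheaf with instanton Chern classes for every semistable $F$ — which is of comparable difficulty, so the route through Zariski's Main Theorem seems preferable.
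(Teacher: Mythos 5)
Your proposal is correct and follows the same overall skeleton as the paper: construct $\psi$ from the Fourier--Mukai equivalence, prove it is proper and injective on closed points (hence finite), and conclude via birationality onto the normal variety $M$. The one genuine divergence is in how birationality is obtained. The paper argues directly and self-containedly: at a stable point $[E]$ the tangent space is $\Ext^1(E,E)$, and since $\Phi^*$ is an equivalence of $\mathcal{B}_{V_4}$ with $\mathcal{D}^b(C)$ it identifies $\Ext^1(E,E)$ with $\Ext^1(\Phi^*(E)[-1],\Phi^*(E)[-1])$, so $\psi$ is \'etale on the whole stable locus and, being injective over $\C$, an open immersion there. You instead import \cite[Theorem 5.10]{Ku2} to get an open immersion only on the smaller locally free locus, which buys you a shorter argument but at the cost of the compatibility check you yourself flag (that your $\psi$ restricted to $M^{inst}_{\mathrm{lf}}$ really is Kuznetsov's comparison map) and of needing nonemptiness and $5$-dimensionality of $M^{inst}_{\mathrm{lf}}$ as an input; the paper's tangent-space route avoids both and also dovetails with its separate smoothness theorem for $M^{inst}$, whereas you recover smoothness of $M^{inst}$ only a posteriori from the isomorphism. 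Your treatment of the family-theoretic construction of $\psi$ and of the final $\pP^3$-bundle statement via Narasimhan--Ramanan is more detailed than the paper's, which simply asserts algebraicity of $\Psi$ from the Fourier--Mukai property and cites \cite{NR}; both are fine.
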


We mention some related work. On projective space $\pP^3$, the question of how instanton bundles degenerate has been intensely studied. Maruyama and Trautmann \cite{MT} were the first to consider limits of instantons. Instanton sheaves of charge $n$ on $\pP^3$ are first defined by Jardim \cite{J} as torsion free sheaves $E$ with $c_1(E)=0$, $c_2(E)=n$, $c_3(E)=0$ satisfying
$$h^0(E(-1))=h^1(E(-2))=h^2(E(-2))=h^3(E(-3))=0.$$ (See also \cite{J} for a study of instanton sheaves on general $\pP^n$.) Gargate and Jardim \cite{GJ} showed (among other properties) the singular locus of $E$, i.e. the quotient of the double dual of $E$ by $E$, has pure dimension $1$.
Jardim, Markushevich, Tikhomirov \cite{JMT}  considered  the boundary of moduli space of instanton bundles of charge $n$ on $\pP^3$ in the moduli space of semistable rank $2$ sheaves with Chern classes $c_1=0,c_2=n$ and $c_3=0$. They showed that the boundary contains $n$ irreducible divisors $\mathcal{D}(m,n)$ where $m=1,2\ldots,n$, whose generic points represent instanton sheaves singular along a normal rational curves of degree $m$. Moreover, these divisors only suffice to compactify the moduli space when $n=1$ is the minimal charge. We note that the boundary sheaves in our compactification do not satisfy instantonic vanishing conditions (translate to $h^0(E)=h^1(E(-1))=h^2(E(-1)=h^3(E(-2))=0$ on Fano threefolds of index $2$). But they do satisfy the property that they are singular on rational curve of degree $\leq 2$ and suffice to compactify the moduli space of minimal instantons. \  

Faenzi \cite{Fa} and Kuznetsov \cite{Ku2} generalized the notion of instanton bundles to Fano threefolds and studied instanton bundles on Fano threefolds of index $2$ and Picard rank $1$ using techniques in derived categories. Properties of instanton bundles and their moduli spaces on degree $4,5$ cases were investigated in detail in \cite{Fa} and \cite{Ku2}.
The author \cite{Q} studied compactification of the moduli space of  minimal instantons on the degree $5$ Fano threefold of index $2$ by associating such sheaves with quiver representations via the Kuznetsov component.\ 

Casnati, Coskun, Genc and Malaspina \cite{CCGM}  proposed a general definition of instanton bundles with given charge on any Fano threefold, unifying all previous notions. They studied instanton bundles on the blow up $\widetilde{\pP^3}$ of $\pP^3$ at a point in the same paper. Henni \cite{He} examined instanton sheaves and their moduli on $\widetilde{\pP^3}$.\\

This paper is organized as follows. In the second section the reader can find some preliminary definitions and results that are used throughout the paper. In the third section we classify the semistable rank $2$ sheaves with Chern classes $c_1=0$, $c_2=2$  and $c_3=0$, showing the parallel result to that holding for cubic threefolds. In the fourth section we connect such sheaves to vector bundles on $C$ using derived category. In the last section we describe the compactification of the moduli space of instantons on $V_4$.

\subsection*{Notation and conventions}
\begin{itemize}
    \item We work over the complex numbers $\C$.
    \item Let $E$ be a sheaf on $V_4$. We use $H^i(E)$ to denote $H^i(V_4,E)$ for simplicity. Also we use $h^i(E)$ to denote the dimension of $H^i(V_4,E)$ as a complex vector space.
    \item  $M_{V_4}$ denotes the moduli space  of semistable rank $2$ sheaves with Chern classes $c_1=0$, $c_2=2$  and $c_3=0$ on $V_4$.
    \item $M_C$ denotes the moduli space of semistable vector bundles of rank $2$ and degree $0$ on the associated curve $C$.
    \item Let $F$ be a sheaf with certain characterization, we will use $[F]$ to denote the point it corresponds to in the moduli space.
\end{itemize}
\subsection*{Acknowledgement}
The author would like to thank his advisor Valery Lunts for constant support and inspiring discussions. He would like to thank Izzet Coskun, St\'ephane Druel, Alexander Kuznetsov, Michael Larsen, Emanuele Macr\`i, Shizhuo Zhang for useful discussions. He is very grateful to the anonymous referees for many helpful comments.
\section{Preliminaries}
    
\subsection{Derived Categories} Let $X$ be an algebraic variety, we use $\mathcal{D}^b(X)$ to denote the derived categories of coherent sheaves on $X$. For objects $F,G\in \mathcal{D}^b(X)$, we denote $\Ext^p(F,G)=\Hom(F,G[p])$ and $\Ext^\bullet(F,G)=\oplus_{p\in\Z}\Ext^p(F,G)[-p]$. Recall that a sequence of full admissible triangulated subcategories of a triangulated category $\mathcal{T}$
\begin{align*}
    D_1,\ldots,D_n\subset \mathcal{T}
\end{align*}
is semi-orthogonal if for all $j>i$
\begin{align*}
    D_i\subset D_j^{\perp},
\end{align*}
where $D_j^{\perp}$ is the full subcategory of objects $C\in\mathcal{T}$ such that $\Hom(B,C)=0$ for all objects $B\in D_j$. 
Such a sequence defines a semi-orthogonal decomposition of $\mathcal{T}$ if the smallest full subcategory of $\mathcal{T}$ containing $D_1,\ldots, D_n$ is itself, in this case we use the notion $\mathcal{T}=\langle D_1,\ldots,D_n\rangle$. An easy way to produce a semi-orthogonal decomposition is by using exceptional objects or collections.
\begin{defns}
    An object $F\in \mathcal{T}$ is called exceptional if $\Ext^\bullet(F,F)=\C$. A collection of exceptional objects $F_1,\ldots,F_n$ is called an exceptional collection if $\Ext^p(F_j,F_i)=0$ for all $j>i$ and all $p\in \Z$.
\end{defns}

On a smooth Fano threefold $V$ of index $2$, Kodaira vanishing theorem implies:
    \begin{align*}
        H^i(V,\cO_V)=0
    \end{align*}
    for all $i>0$. Thus all line bundles on $V$ are exceptional objects. Moreover, we can check $\{\cO_{V},\cO_{V}(1)\}$ is an exceptional collection using the fact that $V$ has index $2$. We denote their left orthogonal complement by $\mathcal{B}_{V}$ and obtain the following semi-orthogonal decomposition:
    \begin{align*}
        \mathcal{D}^b(V)=\langle \mathcal{B}_{V},\cO_{V},\cO_{V}(1)\rangle.
    \end{align*}
    Note an object $F\in \mathcal{D}^b(V)$ is in $\mathcal{B}_{V}$ if and only if 
    \begin{align*}
        &\Hom(\cO_{V},F[i])=0\\
        &\Hom(\cO_{V}(1),F[i])=0
    \end{align*}
    for all $i\in \Z$.
    
\subsection{Vector bundles}
In this section we recall several useful results about vector bundles on smooth projective varieties.
\begin{prop}\cite[Section 1]{H1}
  Let $X$ be a smooth projective variety of dimension at least $2$ and $E$ a vector bundle of rank $2$ on $X$. Suppose there exists a global section of $E$ whose zero locus $Y$ is of pure codimension $2$, then we have an exact sequence:
  \begin{align*}
      0\to \cO_X\to E\to I_Y\otimes \mathrm{det}(E)\to 0.
  \end{align*}
\end{prop}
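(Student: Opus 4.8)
The statement is the classical \emph{Serre construction}; the plan is to build the morphism $E\to I_Y\otimes\det E$ by hand from the section and then check that the resulting three-term complex is exact, the exactness being a local matter where the complex degenerates to a Koszul complex. First I would record the given global section as a morphism $s\colon\cO_X\to E$. Since $E$ has rank $2$, the exterior pairing $E\otimes E\to\wedge^2E=\det E$ is perfect, so contraction against $s$ produces a morphism $\sigma\colon E\to\det E$, $w\mapsto s\wedge w$. Under the canonical isomorphism $E^\vee\cong E\otimes(\det E)^{-1}$ valid for rank-$2$ bundles, $\sigma$ is precisely the dual section $s^\vee\colon E^\vee\to\cO_X$ twisted by $\det E$; hence, by the very definition of the vanishing scheme of a section, $\operatorname{im}(\sigma)=I_Y\otimes\det E$ and $\sigma$ is surjective onto $I_Y\otimes\det E$. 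As $s\wedge s=0$, the composite $\sigma\circ s$ vanishes, and we obtain a complex
\begin{align*}
0\to\cO_X\xrightarrow{\ s\ }E\xrightarrow{\ \sigma\ }I_Y\otimes\det E\to0.
\end{align*}

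It remains to prove exactness, which I would verify at each point $x\in X$. Passing to the regular local ring $A=\cO_{X,x}$ and trivializing $E\cong A^{\oplus2}$, $\det E\cong A$, the section becomes a pair $s=(f,g)$ with $f,g\in A$; then $\sigma$ is the map $(a,b)\mapsto fb-ga$, the stalk of $I_Y$ is the ideal $(f,g)$, and the displayed complex is exactly the Koszul complex on the pair $(f,g)$, up to signs. This is the one place the hypotheses are needed: since $Y$ has pure codimension $2$, the ideal $(f,g)$ has height $2$, equal to the number of its generators, and in the Cohen--Macaulay (here even regular) ring $A$ this forces $f,g$ to be a regular sequence. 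The Koszul complex on a regular sequence is exact, resolving $A/(f,g)$, which gives exactness at $\cO_X$ and at $E$; surjectivity onto $I_Y\otimes\det E$ was already established. Globalizing over an open cover of $X$ yields the short exact sequence.

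The only genuine obstacle is this height/regular-sequence input: one must know that a pure codimension-$2$ zero locus in a smooth (or merely Cohen--Macaulay) variety is automatically a local complete intersection cut out by a regular sequence — this is exactly what makes the Koszul complex exact, and it fails without purity, in which case $\ker\sigma$ can be strictly larger than $\cO_X\cdot s$. Everything else is a formal manipulation with rank-$2$ bundles, their determinants, and their duals.
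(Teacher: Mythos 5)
Your proof is correct and is exactly the classical Koszul-complex argument; the paper itself gives no proof of this proposition but simply cites Hartshorne [H1, Section 1], where the same construction (dualize the section to get $E^\vee\to I_Y$, twist by $\det E$, and check exactness locally via the regular sequence coming from purity of codimension) is carried out. Nothing to add.
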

    
\begin{thm}[The Serre construction]\cite[Section 1]{Ar}
    Suppose $X$ is a smooth projective variety of dimension at least $3$. Let $L$ be a invertible sheaf such that $h^1(L^{-1})=0$ and $h^2(L^{-1})=0$ and $Y\subset X$ a closed subscheme of pure codimension $2$. We have an isomorphism $\Ext^1(I_Y\otimes L,\cO_X)=H^0(\cO_Y)$. The subscheme $Y$ is the zero locus of a section of a vector bundle of rank $2$ with determinant $L$ if and only if $Y$ is locally complete intersection and $\omega_Y=(\omega_X\otimes L)|_Y.$
\end{thm}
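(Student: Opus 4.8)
The plan is to derive both assertions from the structure sequence $0\to I_Y\to\cO_X\to\cO_Y\to 0$ of $Y$, together with a local Koszul computation along $Y$. First I would handle the $\Ext$-group. Since $L$ is invertible, $\Ext^1(I_Y\otimes L,\cO_X)\cong\Ext^1(I_Y,L^{-1})$, and $\mathcal{H}om(I_Y,\cO_X)\cong\cO_X$ because $X$ is smooth and $I_Y$ agrees with $\cO_X$ outside the codimension-$2$ locus $Y$, so $I_Y^{\vee}\cong(I_Y^{\vee\vee})^{\vee}\cong\cO_X$. Applying $\Hom(-,L^{-1})$ to the structure sequence and using $\Ext^i(\cO_X,L^{-1})=H^i(X,L^{-1})$ together with $h^1(L^{-1})=0$ gives $\Ext^1(I_Y,L^{-1})\cong\Ext^2(\cO_Y,L^{-1})$; and since $Y$ has codimension $\ge 2$ we have $\mathcal{E}xt^i(\cO_Y,L^{-1})=0$ for $i\le 1$, so the local-to-global spectral sequence degenerates to $\Ext^2(\cO_Y,L^{-1})\cong H^0\big(X,\mathcal{E}xt^2(\cO_Y,L^{-1})\big)$. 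Keeping track of which of the two vanishing hypotheses is used where is part of the bookkeeping: $h^1(L^{-1})=0$ is what collapses the $\Ext^1$-term, and the remaining hypothesis $h^2(L^{-2})=0$ enters when one checks below that the distinguished class lifts globally.

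Next I would identify the sheaf $\mathcal{E}xt^2(\cO_Y,L^{-1})$ under the assumption that $Y$ is a local complete intersection of codimension $2$. Locally $I_Y$ is resolved by a Koszul complex on two elements, from which $\mathcal{E}xt^i(\cO_Y,-)$ is concentrated in degree $2$ with $\mathcal{E}xt^2(\cO_Y,\cO_X)\cong\det N_{Y/X}$; the same Koszul resolution yields the adjunction formula $\omega_Y\cong\det N_{Y/X}\otimes\omega_X|_Y$. Twisting by the line bundle $L^{-1}$, one gets $\mathcal{E}xt^2(\cO_Y,L^{-1})\cong\omega_Y\otimes(\omega_X\otimes L)^{-1}|_Y$, hence $\Ext^1(I_Y\otimes L,\cO_X)\cong H^0\big(Y,\omega_Y\otimes(\omega_X\otimes L)^{-1}|_Y\big)$. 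This last group is canonically identified with $H^0(\cO_Y)$ precisely when $\omega_Y\cong(\omega_X\otimes L)|_Y$, which establishes the claimed isomorphism (and already one implication of the equivalence).

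For the equivalence itself, the implication ``$\Rightarrow$'' is quick: if $Y=Z(s)$ for a section $s$ of a rank-$2$ bundle $E$ with $\det E=L$, then locally $s$ is given by two equations, so $Y$ is lci of codimension $2$; the Hartshorne sequence $0\to\cO_X\to E\to I_Y\otimes L\to 0$ recalled above identifies $N_{Y/X}\cong E|_Y$, hence $\det N_{Y/X}\cong L|_Y$, and adjunction gives $\omega_Y\cong\det N_{Y/X}\otimes\omega_X|_Y\cong(\omega_X\otimes L)|_Y$. Conversely, given $Y$ lci of codimension $2$ with $\omega_Y\cong(\omega_X\otimes L)|_Y$, the isomorphism above sends $1\in H^0(\cO_Y)$ to a class $e\in\Ext^1(I_Y\otimes L,\cO_X)$, producing an extension $0\to\cO_X\to E\to I_Y\otimes L\to 0$. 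Restricting to a point $y\in Y$ and using the local Koszul presentation of $I_{Y,y}$, one checks that the image of $e$ generates the stalk $\mathcal{E}xt^1(I_Y\otimes L,\cO_X)_y\cong\cO_{Y,y}$; since $e$ is a local generator, the extension is locally isomorphic to the Koszul presentation $0\to\cO_{X,y}\to\cO_{X,y}^{\oplus 2}\to(I_Y\otimes L)_y\to 0$, so $E_y\cong\cO_{X,y}^{\oplus 2}$, while $E$ is automatically locally free away from $Y$. The defining sequence gives $\det E=L$, and the tautological section $\cO_X\to E$ has zero scheme exactly $Y$.

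The step that needs the most care is this last one: one must verify that the \emph{global} class $e$ restricts to a \emph{local generator} of the sheaf $\mathcal{E}xt^1(I_Y\otimes L,\cO_X)$ at every point of $Y$ — the sheaf-theoretic Serre / Cayley--Bacharach condition — because this is exactly what forces the middle term of the extension to be locally free rather than merely reflexive. Here the hypotheses $h^1(L^{-1})=0$ and $h^2(L^{-2})=0$ are what make the local-to-global spectral sequence for $\Ext^1(I_Y\otimes L,\cO_X)$ degenerate enough both to identify it with $H^0$ of the sheafy $\mathcal{E}xt^1$ and to see that the distinguished local class is unobstructed. The remaining verifications — exactness along the spectral-sequence edge maps, the adjunction formula from the Koszul complex, and the scheme-theoretic identification of $Z(s)$ with $Y$ — are routine.
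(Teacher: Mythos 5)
The paper does not actually prove this statement: it is recalled as the classical Hartshorne--Serre correspondence (the surrounding material is attributed to \cite{H1}), so there is no in-paper argument to measure yours against. Your outline follows the standard route --- the structure sequence of $Y$, the local-to-global spectral sequence, the Koszul identification $\mathcal{E}xt^2(\mathcal{O}_Y,\mathcal{O}_X)\cong\det N_{Y/X}$ together with adjunction, and the criterion that the middle term of the extension is locally free iff the class generates $\mathcal{E}xt^1(I_Y\otimes L,\mathcal{O}_X)$ at every point of $Y$. The forward implication and the local-freeness step are fine as sketches.

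The genuine gap sits exactly where the cohomological hypotheses must do real work. In your own exact sequence, $h^1(L^{-1})=0$ gives injectivity of $\Ext^1(I_Y,L^{-1})\to\Ext^2(\mathcal{O}_Y,L^{-1})\cong H^0(\mathcal{E}xt^2(\mathcal{O}_Y,L^{-1}))$, but surjectivity is obstructed by the image in $\Ext^2(\mathcal{O}_X,L^{-1})=H^2(X,L^{-1})$ --- and $h^2(L^{-1})=0$ is \emph{not} among the hypotheses; what is assumed is $h^2(L^{-2})=0$. You defer this to the claim that ``the distinguished class lifts globally'' thanks to $h^2(L^{-2})=0$, but you never exhibit any map into or out of $H^2(X,L^{-2})$, so the existence direction (producing a global $e\in\Ext^1(I_Y\otimes L,\mathcal{O}_X)$ hitting $1\in H^0(\mathcal{O}_Y)$) is not established, and neither is the asserted isomorphism $\Ext^1(I_Y\otimes L,\mathcal{O}_X)\cong H^0(\mathcal{O}_Y)$; this lifting is precisely the delicate point of the Serre construction and cannot be waved through. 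A secondary issue: your argument identifies $\Ext^1(I_Y\otimes L,\mathcal{O}_X)$ with $H^0(\mathcal{O}_Y)$ only after assuming $Y$ is a local complete intersection with $\omega_Y\cong(\omega_X\otimes L)|_Y$, whereas the statement as printed asserts the isomorphism for an arbitrary pure codimension $2$ subscheme; you should either flag that the isomorphism requires these extra hypotheses (as in Hartshorne's original formulation) or give an argument in the stated generality, since for general $Y$ the group computes sections of $\mathcal{E}xt^2(\mathcal{O}_Y,L^{-1})$, which need not be $\mathcal{O}_Y$.
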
    

We recall the following useful result:
\begin{prop}[Mumford-Castelnuovo Criterion]\cite[Lemma 5.1]{FGA}\label{mumford}
  Let $F$ be a coherent sheaf on a projective variety $X$. Suppose $h^i(X,F(-i))=0$ for all $i\geq 1$, then $h^i(X,F(k))=0$ for all $i\geq 1$ and $k\geq -i$. Moreover $F$ is generated by global sections.
\end{prop}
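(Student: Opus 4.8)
The hypothesis $H^i(X,F(-i))=0$ for all $i\ge 1$ says precisely that $F$ is $0$-regular in the sense of Castelnuovo--Mumford, and the asserted vanishing $H^i(F(k))=0$ for $i\ge 1,\ k\ge -i$ is exactly the statement that $F$ is $k$-regular for every $k\ge 0$. So the plan is to prove the standard regularity theorem: $0$-regularity implies $k$-regularity for all $k\ge 0$ together with global generation. The first move is to reduce to projective space. Fixing the very ample $\cO_X(1)$ that defines a closed immersion $\iota\colon X\hookrightarrow \pP^N$ and setting $\~F=\iota_*F$, the projection formula gives $H^i(\pP^N,\~F(k))=H^i(X,F(k))$ for all $i,k$, and $\~F$ is globally generated on $\pP^N$ if and only if $F$ is on $X$ (apply $\iota^*$ to the evaluation map). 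Thus I may assume $X=\pP^N$, where all cohomology of line bundles is explicit and, crucially, $H^1(\cO_{\pP^N})=0$.

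The engine is the general hyperplane section. Since we work over $\C$, a general $s\in H^0(\cO(1))$ avoids the finitely many associated points of $F$, so $s$ is a non-zero-divisor on $F$ and we obtain short exact sequences
\begin{align*}
0\to F(k-1)\xrightarrow{\,s\,} F(k)\to F_H(k)\to 0
\end{align*}
for all $k$, where $H=\{s=0\}\cong\pP^{N-1}$ and $F_H=F|_H$. The associated long exact sequences are the only tool needed. First I would establish the cohomology vanishing by induction on $N$: chasing the long exact sequence shows that if $F$ is $m$-regular then $F_H$ is $m$-regular on $\pP^{N-1}$ (both neighbouring terms vanish by $m$-regularity of $F$), so by induction $F_H$ is $(m+1)$-regular; feeding this back into the long exact sequence, together with the vanishing of $H^i(F(m-i))$, yields $H^i(F(m+1-i))=0$, i.e.\ $F$ is $(m+1)$-regular. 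Starting from $m=0$ and iterating gives $k$-regularity for all $k\ge 0$, which is exactly the claimed vanishing.

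For global generation I would first establish surjectivity of the multiplication maps
\begin{align*}
\mu_k\colon H^0(F(k))\otimes H^0(\cO(1))\to H^0(F(k+1)),\qquad k\ge 0,
\end{align*}
again by induction on $N$. Given $t\in H^0(F(k+1))$, its restriction to $H$ lies in the image of $\mu_k^H$ by the inductive hypothesis (using that $F_H$ is $0$-regular); lifting the factors through the surjections $H^0(F(k))\twoheadrightarrow H^0(F_H(k))$ (surjective because $H^1(F(k-1))=0$ by regularity) and $H^0(\cO(1))\twoheadrightarrow H^0(\cO_H(1))$ (surjective because $H^1(\cO_{\pP^N})=0$) produces $t'\in\mathrm{im}(\mu_k)$ with $t-t'$ restricting to $0$ on $H$, hence $t-t'=s\cdot w\in\mathrm{im}(\mu_k)$. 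With $\mu_k$ surjective in hand I would run a downward induction on $k$: by Serre's theorem $F(k)$ is globally generated for $k\gg 0$, and a short diagram chase shows that surjectivity of $\mu_k$ propagates global generation of $F(k+1)$ down to $F(k)$, so that $F=F(0)$ is globally generated.

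The main obstacle is the global generation statement, and within it the surjectivity of the multiplication maps $\mu_k$: this is where the inductive structure, the two restriction surjections, and the exactness of the hyperplane sequence must all be combined, and it is the step most sensitive to the choice of a general $H$ avoiding the associated points of $F$ (the reduction to $\pP^N$ is what makes the two restriction maps genuinely surjective). By contrast the cohomology vanishing is a fairly mechanical long-exact-sequence induction once the hyperplane machinery is in place.
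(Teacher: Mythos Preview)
Your proof is correct and is the standard Castelnuovo--Mumford regularity argument (reduce to $\pP^N$, induct on $N$ via a general hyperplane section, then deduce global generation from surjectivity of the multiplication maps). The paper, however, does not prove this proposition at all: it is stated in the preliminaries as a well-known result being recalled for later use (the reference \cite{Mu} in the bibliography is Mumford's original source), so there is no proof in the paper to compare against.

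One small remark on your global generation step: the downward induction is fine, but you could also argue directly without Serre's theorem. Once the $\mu_k$ are surjective for all $k\ge 0$, iterating gives that $H^0(F)\otimes H^0(\cO(k))\to H^0(F(k))$ is surjective for every $k\ge 0$; since this map factors through $H^0(F)\otimes\cO(k)\to F(k)$ (via evaluation of $\cO(k)$), and $F(k)$ is globally generated for $k\gg 0$, the map $H^0(F)\otimes\cO(k)\to F(k)$ is surjective for $k\gg 0$, hence $H^0(F)\otimes\cO\to F$ is surjective after untwisting. This is essentially what you wrote, just packaged so that only one value of $k$ is needed rather than a step-by-step descent.
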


\subsection{Fano 3-fold $V_4$}
(See also \cite[Section 5.1]{Ku2})
A Fano threefold of Picard rank $1$, index $2$ and degree $4$ is a smooth intersection of two quadrics in $\pP^5$. We let $V_4$ be such a threefold. There is a smooth curve $C$ of genus $2$ associated to $V_4$. We briefly recall its construction. Let $V$ be a complex vector space of dimension $6$ and $A$ a vector space of dimension $2$. A pair of quadrics gives a map $A\to S^2V^*$, so $\pP(A)$ parametrizes a family of quadrics in $\pP(V)$. There are $6$ degenerate quadrics in this family, giving $6$ points on $\pP(A)\simeq \pP^1$. $C$ is defined to by the double cover of $\pP(A)$ ramified at the $6$ points. Clearly $C$ is a curve of genus $2$. We use $\tau:C\to C$ to denote its hyperelliptic involution.\\

By looking at the spinor bundles on quadrics in $\pP(A)$, one can show there is a vector bundle $\mathcal{S}$  of rank $2$ on $C\times V_4$. The Fourier-Mukai functor with kernel $\mathcal{S}$ connects $C$ with $V_4$ in the following way:
\begin{thm}\cite[Theorem 2.7]{BO}
    The Fourier-Mukai functor $\Phi_\mathcal{S}:\mathcal{D}^b(C)\to \mathcal{D}^b(V_4)$ provides an equivalence of $\mathcal{D}^b(C)$ onto $\mathcal{B}_{V_4}$.
\end{thm}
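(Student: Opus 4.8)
The plan is to verify the two assertions contained in the statement separately: that $\Phi_{\mathcal{S}}$ is fully faithful, and that its essential image is precisely $\mathcal{B}_{V_4}$. Write $\mathcal{S}_y := \mathcal{S}|_{\{y\}\times V_4}$ for a closed point $y\in C$. Since $\mathcal{S}$ is a rank $2$ vector bundle on $C\times V_4$, the object $\Phi_{\mathcal{S}}(\cO_y)$ is just the rank $2$ bundle $\mathcal{S}_y$ on $V_4$, concentrated in degree $0$; geometrically it is the restriction to $V_4$ of one of the two spinor bundles of the quadric $Q_{[y]}$ in the pencil lying over the image of $y$ in $\pP(A)$, the hyperelliptic involution $\tau$ interchanging the two spinor structures over a smooth quadric, consistently with the ramification of $C\to\pP(A)$ over the six degenerate members.

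First I would prove full faithfulness via the Bondal--Orlov point criterion. The skyscraper sheaves $\{\cO_y\}_{y\in C}$ form a spanning class of $\mathcal{D}^b(C)$, so $\Phi_{\mathcal{S}}$ is fully faithful if and only if $\Ext^\bullet_{V_4}(\mathcal{S}_{y_1},\mathcal{S}_{y_2})=0$ for all $y_1\neq y_2$, while on the diagonal $\Hom(\mathcal{S}_y,\mathcal{S}_y)=\C$ and $\Ext^i(\mathcal{S}_y,\mathcal{S}_y)=0$ for $i<0$ and for $i>\dim C=1$. The conditions for $i<0$ are automatic since $\mathcal{S}_y$ is a sheaf, so the substance is: simplicity of $\mathcal{S}_y$; the vanishing of $\Ext^2$ and $\Ext^3$ on the diagonal, which by Serre duality on $V_4$ (with $\omega_{V_4}=\cO_{V_4}(-2)$) become $\Ext^1(\mathcal{S}_y,\mathcal{S}_y(-2))=0$ and $\Hom(\mathcal{S}_y,\mathcal{S}_y(-2))=0$; and the off-diagonal orthogonality. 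I would extract all of these from the cohomology of spinor bundles on the four-dimensional quadrics $Q_\lambda$ --- their Kapranov exceptional collection and the defining short exact sequences relating $S^{\pm}$, $\cO$ and $\cO(1)$ --- restricted to the divisor $V_4\subset Q_\lambda$ through the Koszul resolution of $\cO_{V_4}$, combined with Serre duality.

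Next I would place the image in $\mathcal{B}_{V_4}$. Because $\mathcal{B}_{V_4}={}^{\perp}\langle\cO_{V_4},\cO_{V_4}(1)\rangle$ and the objects $\mathcal{S}_y$ generate $\mathrm{Im}(\Phi_{\mathcal{S}})$ as a thick subcategory, this amounts to the cohomology vanishings $H^\bullet(V_4,\mathcal{S}_y)=0$ and $H^\bullet(V_4,\mathcal{S}_y(-1))=0$, again a spinor-bundle computation on $V_4$. To promote the inclusion $\mathrm{Im}(\Phi_{\mathcal{S}})\subseteq\mathcal{B}_{V_4}$ to an equality, I would use that a fully faithful Fourier--Mukai functor has image an admissible subcategory $\mathcal{A}$ (its two adjoints are themselves Fourier--Mukai), so $\mathcal{A}$ is admissible inside $\mathcal{B}_{V_4}$ and there is a semiorthogonal decomposition $\mathcal{B}_{V_4}=\langle\mathcal{A}^{\perp_{B}},\mathcal{A}\rangle$ with $\mathcal{A}^{\perp_B}=\mathcal{A}^\perp\cap\mathcal{B}_{V_4}$; the goal becomes $\mathcal{A}^{\perp_B}=0$. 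A convenient measure is Hochschild homology, which is additive over semiorthogonal decompositions: from the Hodge numbers of $V_4$ (Fano, Picard number $1$, $h^{1,1}=1$, $h^{1,2}=h^{2,1}=2$) one gets $\mathrm{HH}_0(\mathcal{B}_{V_4})=2$ and $\mathrm{HH}_{\pm1}(\mathcal{B}_{V_4})=2$, exactly matching $\mathrm{HH}_\bullet(\mathcal{D}^b(C))$ for the genus $2$ curve $C$. Full faithfulness gives $\mathrm{HH}_\bullet(\mathcal{A})=\mathrm{HH}_\bullet(\mathcal{D}^b(C))$, whence $\mathrm{HH}_\bullet(\mathcal{A}^{\perp_B})=0$; it then remains to exclude a nonzero ``phantom'' complement, which in this concrete case I would do by a direct generation argument --- exhibiting $\cO_\Delta$ on $V_4\times V_4$ as built from $\mathcal{S}\boxtimes(-)$ together with $\cO_{V_4}$ and $\cO_{V_4}(1)$ (a resolution of the diagonal) --- thereby showing $\langle\mathrm{Im}(\Phi_{\mathcal{S}}),\cO_{V_4},\cO_{V_4}(1)\rangle=\mathcal{D}^b(V_4)$ and hence $\mathcal{A}^{\perp_B}=0$.

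I expect the main obstacle to be twofold, and both parts geometric. The off-diagonal vanishing $\Ext^\bullet_{V_4}(\mathcal{S}_{y_1},\mathcal{S}_{y_2})=0$ for $y_1\neq y_2$ is where the global structure of the pencil genuinely enters: one must control how the spinor bundle of one quadric, restricted to the common intersection $V_4$, pairs with that of another quadric, or with the opposite spinor sheet over the same quadric (the case governed by $\tau$), and I anticipate this needs the explicit correspondence on $C\times V_4$ and the Clifford-algebra description of the two spinor families rather than any formal input. The essential-surjectivity step is the second difficulty, since the formal orthogonality bookkeeping reduces everything to the nonvanishing of Hochschild homology for nonzero admissible subcategories --- which can fail for phantoms --- so the rigorous conclusion rests on the explicit resolution of the diagonal. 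By contrast, the diagonal Ext computations and the vanishings placing each $\mathcal{S}_y$ in $\mathcal{B}_{V_4}$ should be routine consequences of Kapranov's resolutions and Serre duality.
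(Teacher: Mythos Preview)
The paper does not give its own proof of this theorem; it is simply quoted from Bondal--Orlov \cite{BO} and used as a black box. There is therefore nothing in the paper to compare your argument against.

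That said, your outline is essentially the original Bondal--Orlov strategy: full faithfulness via the point criterion on the spanning class $\{\cO_y\}$, reducing to Ext computations among the spinor bundles $\mathcal{S}_y$ on $V_4$; containment of the image in $\mathcal{B}_{V_4}$ via the cohomology vanishings $H^\bullet(\mathcal{S}_y)=H^\bullet(\mathcal{S}_y(-1))=0$; and essential surjectivity via an explicit resolution of the diagonal on $V_4\times V_4$ built from $\mathcal{S}$ together with $\cO_{V_4}$ and $\cO_{V_4}(1)$. Your Hochschild-homology bookkeeping is correct ($\mathrm{HH}_0=2$, $\mathrm{HH}_{\pm1}=2$ on both sides) and your acknowledgement that it does not by itself rule out a phantom complement is well placed; the resolution of the diagonal is indeed what settles the matter in \cite{BO}. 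One small notational slip: you write $\mathcal{B}_{V_4}={}^{\perp}\langle\cO_{V_4},\cO_{V_4}(1)\rangle$, but the paper's characterization is $\Hom(\cO_{V_4},F[\bullet])=\Hom(\cO_{V_4}(1),F[\bullet])=0$, i.e.\ the right orthogonal in the usual convention; your listed vanishings are nonetheless the correct ones for that definition, so the issue is purely notational.
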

From now on, we use $\Phi$ to denote $\Phi_\mathcal{S}$ for simplicity.\

Another way to understand the relation between $C$ and $V_4$ and the vector bundle $\mathcal{S}$ is due to Mukai. It is shown that $V_4$ is the moduli space of rank $2$ bundles on $C$ with a fixed determinant $\xi$ of odd degree. Then $\mathcal{S}$ is the universal family. We follow \cite{Ku2}'s convention and assume $\mathrm{deg}\xi=1$. Then
\begin{align*}
    \mathrm{det}(\mathcal{S})=\xi\boxtimes\cO_{V_4}(-1).
\end{align*}

We can also describe the Fano variety of lines $F(V_4)$ using this functor. First note it is straightforward to check that the ideal sheaf $I_l$ of a line is an object in $\mathcal{B}_{V_4}$. Then
\begin{thm}\cite[Lemma 5.5]{Ku2}\label{ideal}
    There is an isomorphism $G: F(V_4)\xrightarrow{\sim}\mathrm{Pic}^0(C)$ given by
    \begin{align*}
        G(l)=\Phi^{-1}(I_{l}[-1]).
    \end{align*}
    In particular $\Phi^{-1}(I_{l}[-1])$ is a degree $0$ line bundle on $C$.
\end{thm}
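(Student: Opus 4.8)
The plan is to transport everything to the curve $C$ through the equivalence $\Phi$, using that it is a Fourier--Mukai transform (Orlov) and that it sends skyscrapers on $C$ to rank-$2$ bundles on $V_4$ (restrictions of $\mathcal S$). First I would check $I_l\in\mathcal B_{V_4}$ by applying $\Hom(\cO_{V_4},-)$ and $\Hom(\cO_{V_4}(1),-)$ to the structure sequence $0\to I_l\to\cO_{V_4}\to\cO_l\to 0$: on a Fano threefold of index $2$ one has $H^\bullet(\cO_{V_4})=\C$ and $H^\bullet(\cO_{V_4}(-1))=0$ (Kodaira vanishing, $\omega_{V_4}=\cO_{V_4}(-2)$, Serre duality), while $\cO_l\simeq\cO_{\pP^1}$ and $\cO_l(-1)\simeq\cO_{\pP^1}(-1)$, so a short diagram chase yields $\Hom(\cO_{V_4}(j),I_l[i])=0$ for all $i\in\Z$ and $j=0,1$; thus $F:=\Phi^{-1}(I_l[-1])\in\mathcal D^b(C)$ is well defined. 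Next I would pin down its numerical invariants: since $\Phi$, hence $\Phi^{-1}$, is Fourier--Mukai, its action on Chern characters is computed from the kernel $\mathcal S$ by Grothendieck--Riemann--Roch, and feeding in $\mathrm{ch}(I_l)=1-[l]-[\mathrm{pt}]$ (using $c_1(N_{l/V_4})=0$) should give $\mathrm{rk}\,F=1$ and $\deg F=0$.

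The hard step is to upgrade $F$ from a complex with the right numerics to a genuine line bundle of degree $0$. Since $\mathcal S$ is locally free, $\Phi(\C_c)=\mathcal S|_{\{c\}\times V_4}=:\mathcal S_c$ is an honest rank-$2$ bundle on $V_4$ with $c_1(\mathcal S_c)=-1$, so for all $c\in C$ and all $j\in\Z$
\[
    \Hom_C(\C_c,F[j])\;=\;\Hom_{V_4}(\mathcal S_c,I_l[j-1])\;=\;\Ext^{j-1}_{V_4}(\mathcal S_c,I_l).
\]
The crux will be to prove $\Ext^i_{V_4}(\mathcal S_c,I_l)=0$ for $i\ne 0$ and $\Hom_{V_4}(\mathcal S_c,I_l)=\C$, via the structure sequence of $I_l$, the explicit cohomology of the bundles $\mathcal S_c$ and of their restrictions $\mathcal S_c|_l$, and Serre duality to handle $\Ext^2$ and $\Ext^3$ (with $\chi(\mathcal S_c,I_l)=1$ as a sanity check). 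Granting this, $\Hom_C(\C_c,F[j])$ is $\C$ for $j=1$ and $0$ otherwise for every $c$, which on the smooth curve $C$ forces $F$ to be a rank-$1$ torsion-free sheaf concentrated in degree $0$, i.e.\ a line bundle, necessarily of degree $0$ by the first paragraph. I expect this $\Ext$ computation — where the specific geometry of $\mathcal S_c$ and the jumping behaviour of $\mathcal S_c|_l$ genuinely enter — to be the main obstacle; the rest is formal.

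Finally I would promote $l\mapsto[F]$ to a morphism and show it is an isomorphism. Over the universal line $\mathcal L\subset F(V_4)\times V_4$ (ideal sheaf $\mathcal I$), apply the relative Fourier--Mukai transform over $F(V_4)$ associated to $\Phi^{-1}$ to $\mathcal I[-1]$; by cohomology and base change together with the previous step (every fibre is a line bundle) the output is a line bundle on $F(V_4)\times C$, flat over $F(V_4)$, with fibre $\Phi^{-1}(I_l[-1])$ over $[l]$, and the universal property of the Jacobian $\mathrm{Pic}^0(C)$ then produces $G\colon F(V_4)\to\mathrm{Pic}^0(C)$ with $G(l)=[\Phi^{-1}(I_l[-1])]$. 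Injectivity is immediate because $\Phi^{-1}$ is an equivalence ($\Phi^{-1}(I_{l_1}[-1])\simeq\Phi^{-1}(I_{l_2}[-1])\Rightarrow I_{l_1}\simeq I_{l_2}\Rightarrow l_1=l_2$); and since $F(V_4)$ is classically a smooth projective irreducible surface, its image is a closed irreducible $2$-dimensional subvariety of the abelian surface $\mathrm{Pic}^0(C)$, hence all of it, so $G$ is a finite, birational (bijective, characteristic $0$) morphism onto a smooth variety, therefore an isomorphism. Along the way we have shown $\Phi^{-1}(I_l[-1])\in\mathrm{Pic}^0(C)$, as asserted.
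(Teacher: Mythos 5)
First, a point of comparison: the paper does not prove this statement at all --- it is imported verbatim from \cite[Lemma 5.5]{Ku2} --- so there is no internal proof to measure you against. Your outline is nevertheless a reasonable reconstruction of how such a result is proved: check $I_l\in\mathcal{B}_{V_4}$ (your diagram chase with the two twists of the structure sequence is correct), pin down the numerical class of $F=\Phi^{-1}(I_l[-1])$ by Grothendieck--Riemann--Roch, show $F$ is a genuine line bundle by testing against skyscrapers, and globalize over $F(V_4)$. The skyscraper test is the right tool: on a smooth curve every object of $\mathcal{D}^b(C)$ is the direct sum of its shifted cohomology sheaves, so $\Hom_C(\C_c,F[j])=\C$ exactly for $j=1$, for every $c$, does force $F$ to be a line bundle in degree $0$. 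The endgame (injectivity from full faithfulness of $\Phi$, surjectivity by properness plus a dimension count against the abelian surface $\mathrm{Pic}^0(C)$, then Zariski's main theorem) is also sound, granting the classical fact that $F(V_4)$ is a smooth irreducible projective surface.

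As written, however, the proposal is not yet a proof, for two reasons. The substantive one is the step you yourself defer: the computation $\Ext^i(\mathcal{S}_c,I_l)=0$ for $i\neq 0$ and $\Hom(\mathcal{S}_c,I_l)=\C$ is where all the geometric content sits, and it is genuinely not automatic. One needs $h^0(\mathcal{S}_c^*)=4$ and $H^{\geq 1}(V_4,\mathcal{S}_c^*)=0$, the splitting type $\mathcal{S}_c^*|_l\simeq\cO_l\oplus\cO_l(1)$ (this part does follow from global generation of $\mathcal{S}_c^*$), and, crucially, surjectivity of the restriction $H^0(\mathcal{S}_c^*)\to H^0(\mathcal{S}_c^*|_l)$, equivalently $h^0(\mathcal{S}_c^*\otimes I_l)=1$, i.e.\ that the sections of $\mathcal{S}_c^*$ vanishing along $l$ (whose zero loci are conics containing $l$) form only a one-dimensional space. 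The Euler characteristic $\chi(\mathcal{S}_c^*\otimes I_l)=1$ does not by itself separate $h^0$ from $h^1$, so this must actually be argued; until it is, the heart of the theorem is missing. The second, minor, point is that $\mathrm{ch}(I_l)=1-[l]$, not $1-[l]-[p]$: precisely because $\deg N_{l/V_4}=0$, the point-class correction in GRR for the embedding $l\hookrightarrow V_4$ vanishes (sanity check: $\chi(\cO_l)=\int [l]\cdot\mathrm{td}(\mathcal{T}_{V_4})=[l]\cdot h=1$). With your value the formula of Lemma \ref{RR} has no solution (it forces both $\deg F=0$ and $\deg F=3$); with the corrected one it gives $\mathrm{rk}\,F=1$ and $\deg F=0$ as desired.
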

This result will be crucial in our analysis of strictly semistable sheaves.\\

Finally we provide some topological information that will be useful later on. Let $[h],[l],[p]$ be the class of a hyperplane section, a line and a point respectively. Then
\begin{align*}
     H^2(V_4,\Z)\simeq \Z[h], \quad H^4(V_4,\Z)\simeq \Z[l], \quad H^6(V_4,\Z)\simeq \Z[p]
\end{align*}
with $h\cdot l=p,h^2=4l,h^3=4p$.\\
The natural embedding $V_4\subset \pP^5$ provides a very ample divisor $\cO_{V_4}(1)$. We will always use this polarization when talking about stability. A general section of $|\cO_{V_4}(1)|$ is a del-Pezzo surface of degree $4$, hence isomorphic to $\pP^2$ blown up at $5$ points in general position.\\
We compute the Todd class of $V_4$
\begin{align*}
    \mathrm{td}(\mathcal{T}_{V_4})&=1+h+\frac{7}{3}l+p.
\end{align*}
We also recall the Grothendieck-Riemann-Roch for the functor $\Phi$.
\begin{lem}\cite[Lemma 5.2]{Ku2}\label{RR}
    For any $F\in \mathcal{D}^b(C)$ we have
    \begin{align*}
        \mathrm{ch}(\Phi(F))=(2\mathrm{deg}(F)-\mathrm{rk}(F))-\mathrm{deg}(F)h+\mathrm{rk}(F)l+\frac{\mathrm{deg}(F)}{3}p.
    \end{align*}
\end{lem}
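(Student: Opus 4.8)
The plan is to turn the statement into a short linear-algebra problem on $H^{ev}(V_4;\mathbb{Q})$ fed by two test computations. Since $\Phi$ is an exact functor of triangulated categories it descends to $[\Phi]\colon K_0(C)\to K_0(V_4)$, and $\mathrm{ch}$ is additive, so $F\mapsto\mathrm{ch}(\Phi(F))$ is a homomorphism $K_0(C)\to H^{ev}(V_4;\mathbb{Q})$. Two objects of equal $(\mathrm{rk},\mathrm{deg})$ differ in $K_0(C)$ by a sum of classes $[\cO_c]-[\cO_{c'}]$ (by the determinant description of $K_0$ of a curve together with Abel--Jacobi), and each such class maps to $0$ because $\mathrm{ch}(\Phi(\cO_c))=\mathrm{ch}(\mathcal{S}_c)$, with $\mathcal{S}_c:=\mathcal{S}|_{\{c\}\times V_4}$, is independent of $c$ (a flat family over the connected curve $C$). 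Hence $\mathrm{ch}(\Phi(F))$ depends only on $\mathrm{rk}(F)$ and $\mathrm{deg}(F)$, and there are universal classes $A,B$ with
\[
\mathrm{ch}(\Phi(F))=\mathrm{rk}(F)\,A+\mathrm{deg}(F)\,B,\qquad A=\mathrm{ch}(\Phi(L_0)),\quad B=\mathrm{ch}(\Phi(\cO_c)),
\]
where $L_0$ is any degree $0$ line bundle. Comparing with the asserted formula, it suffices to prove $A=-1+l$ and $B=2-h+\tfrac13 p$.

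For $A$ I would invoke Theorem~\ref{ideal}: since $G\colon F(V_4)\xrightarrow{\sim}\mathrm{Pic}^0(C)$ is an isomorphism, every degree $0$ line bundle equals $\Phi^{-1}(I_l[-1])$ for a unique line $l$, so $\Phi(L_0)\cong I_l[-1]$. A line is a $\pP^1$, hence $\chi(\cO_l)=1$, and Hirzebruch--Riemann--Roch with $\mathrm{td}(\mathcal{T}_{V_4})=1+h+\tfrac73 l+p$ forces $\mathrm{ch}(\cO_l)=l$ (its degree-$6$ part must vanish). Thus $A=\mathrm{ch}(I_l[-1])=-\bigl(1-\mathrm{ch}(\cO_l)\bigr)=-1+l$. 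For $B=\mathrm{ch}(\mathcal{S}_c)$ (using $\Phi(\cO_c)=\mathcal{S}_c$), two coefficients are immediate: $\mathcal{S}_c$ has rank $2$, and restricting $\mathrm{det}(\mathcal{S})=\xi\boxtimes\cO_{V_4}(-1)$ to $\{c\}\times V_4$ gives $\mathrm{det}(\mathcal{S}_c)=\cO_{V_4}(-1)$, whence $B=2-h+b_2 l+b_3 p$ with only $b_2,b_3$ undetermined.

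To fix $b_2,b_3$ I would use that $\Phi$ is fully faithful (an equivalence onto $\mathcal{B}_{V_4}$ by \cite{BO}), so it preserves Euler pairings: $\chi_{V_4}(\Phi F,\Phi F')=\chi_C(F,F')$. Riemann--Roch on the genus $2$ curve gives $\chi_C(F,F')=\mathrm{rk}(F)\mathrm{deg}(F')-\mathrm{rk}(F')\mathrm{deg}(F)-\mathrm{rk}(F)\mathrm{rk}(F')$, while Hirzebruch--Riemann--Roch writes the left-hand side as $\langle\mathrm{ch}(\Phi F),\mathrm{ch}(\Phi F')\rangle$ with $\langle X,Y\rangle:=\int_{V_4}X^\vee Y\,\mathrm{td}(\mathcal{T}_{V_4})$. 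Taking $(F,F')=(\cO_c,\cO_c)$ yields $\langle B,B\rangle=0$, and $(F,F')=(L_0,\cO_c)$ yields $\langle A,B\rangle=1$. Expanding these with $A=-1+l$ and the partial form of $B$, using $h^2=4l$, $h\cdot l=p$ and $\int_{V_4}p=1$, gives $4b_2=0$ and $\tfrac43-b_2-b_3=1$, so $b_2=0$ and $b_3=\tfrac13$. Substituting $A$ and $B$ into the displayed formula recovers the claim, and the further identity $\langle A,A\rangle=-1$ serves as a consistency check on $A$.

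The main obstacle is the single genuinely geometric input concealed inside this otherwise formal argument: the identification of $\Phi$ of a degree $0$ line bundle with a shifted ideal sheaf of a line (Theorem~\ref{ideal}) together with $\mathrm{det}(\mathcal{S})$. The alternative, head-on route is Grothendieck--Riemann--Roch for $p_{V_4}\colon C\times V_4\to V_4$, writing $\mathrm{ch}(\Phi(F))=p_{V_4*}\bigl(\mathrm{ch}(\mathcal{S})\cdot p_C^*(\mathrm{ch}(F)\,\mathrm{td}(\mathcal{T}_C))\bigr)$; there the real work is extracting $\mathrm{ch}(\mathcal{S})$, in particular $c_2(\mathcal{S})$, from the spinor-bundle construction. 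One should note that along this route the $H^1(C)\otimes H^3(V_4)$ K\"unneth components of $\mathrm{ch}(\mathcal{S})$ never contribute, since $\mathrm{ch}(F)\,\mathrm{td}(\mathcal{T}_C)$ lives in $H^0(C)\oplus H^2(C)$ and such terms die under fibre integration; this is exactly why only $\mathrm{rk}(F)$ and $\mathrm{deg}(F)$ enter, and it is what the Euler-pairing route avoids computing altogether.
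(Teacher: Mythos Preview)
Your argument is correct. Note, however, that the paper does not supply its own proof of this lemma: it is simply quoted from \cite[Lemma~5.2]{Ku2}, so there is no in-paper proof to compare against. In Kuznetsov's original development the computation is carried out by the direct Grothendieck--Riemann--Roch route you sketch in your final paragraph, pushing $\mathrm{ch}(\mathcal{S})\cdot p_C^*(\mathrm{ch}(F)\,\mathrm{td}(\mathcal{T}_C))$ forward along $p_{V_4}$ after determining enough of $\mathrm{ch}(\mathcal{S})$.

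Your route is genuinely different and in some ways slicker: by reducing to the two test classes $A=\mathrm{ch}(\Phi(L_0))$ and $B=\mathrm{ch}(\Phi(\cO_c))$, reading off rank and $c_1$ of $\mathcal{S}_c$ from $\mathrm{det}(\mathcal{S})=\xi\boxtimes\cO_{V_4}(-1)$, and then pinning down the remaining two coefficients of $B$ via the Euler pairing (which full faithfulness of $\Phi$ lets you compute on $C$), you avoid ever needing $c_2(\mathcal{S})$ explicitly. The trade-off is the geometric input you draw on for $A$, namely Theorem~\ref{ideal} ($\Phi(L_0)\cong I_l[-1]$). You flag this yourself, and it deserves emphasis: in \cite{Ku2} that statement is Lemma~5.5, proved \emph{after} Lemma~5.2 and using it. So while your argument is logically sound within the present paper (where both results are imported as established facts), it would be circular as a replacement proof inside Kuznetsov's paper. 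If you want something self-contained, either run GRR directly, or replace the appeal to Theorem~\ref{ideal} by a third Euler-pairing equation: since $\Phi$ lands in $\mathcal{B}_{V_4}$, one has $\chi_{V_4}(\cO_{V_4},\Phi(L_0))=0$, which with $A=-1+a_2 l+a_3 p$ (rank and $c_1$ being forced by the known rank-$0$ and degree-$2$ parts of the formula, or by $\langle B,A\rangle=-1$) already gives one linear relation among $a_2,a_3$; a second comes from $\chi_{V_4}(\cO_{V_4}(1),\Phi(L_0))=0$.
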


\subsection{Stability of sheaves} Let $X$ be a smooth projective variety of dimension $n$ and $\cO_X(1)$ a fixed ample line bundle. Let $E$ be a coherent sheaf of rank $r$, then the slope of $E$ is defined as:
\begin{align*}
    \mu(E)=\frac{c_1(E)\cdot c_1(\cO_X(1))^{n-1}}{r\cdot c_1(\cO_X(1))^n }.
\end{align*}
The sheaf $E$ is called \emph{(semi)stable} if it is torsion free and for any torsion free subsheaf $F\subset E$, we have 
\begin{align*}
    \frac{\chi(F(n))}{\mathrm{rk}(F)}(\leq)<\frac{\chi(E(n))}{\mathrm{rk}(E)}
\end{align*}
for $n\gg0$. \\
The sheaf $E$ is called \emph{$\mu$-(semi)stable} if it is torsion free and for any torsion free subsheaf $F\subset E$, we have 
\begin{align*}
    \mu(F)(\leq)<\mu(E).
\end{align*}
We have the following implications:
\begin{align*}
    \text{$\mu$-stable}\Rightarrow \text{stable} \Rightarrow \text{semistable}\Rightarrow\text{$\mu$-semistable}
\end{align*}

When $X$ is a smooth projective curve, any torsion free sheaf is locally free. We have the following criterion for semistability which is due to Faltings:
\begin{lem}\cite[Exercise 2.8]{P}\label{scurve}
    Let $F,G$ be vector bundles on a curve $X$ such that $H^i(F\otimes G)=0$ for $i=0,1$, then both $F$ and $G$ are semistable.
\end{lem}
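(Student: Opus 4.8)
The statement is Faltings' cohomological semistability criterion, and the plan is to prove it by combining Riemann--Roch with a destabilizing-subbundle argument. Since the hypothesis $H^0(F\otimes G)=H^1(F\otimes G)=0$ is symmetric in $F$ and $G$, it suffices to prove that $F$ is semistable. First I would record the numerical consequence of the vanishing. Writing $g$ for the genus of $X$, the hypothesis forces $\chi(F\otimes G)=0$, and Riemann--Roch on the curve gives $\chi(F\otimes G)=\deg(F\otimes G)+\mathrm{rk}(F\otimes G)(1-g)$. Expanding $\deg(F\otimes G)=\mathrm{rk}(G)\deg(F)+\mathrm{rk}(F)\deg(G)$ and dividing by $\mathrm{rk}(F)\,\mathrm{rk}(G)$, this rearranges to the key identity $\mu(F)+\mu(G)=g-1$.

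Next I would argue by contradiction. Suppose $F$ is not semistable, so there is a proper subsheaf of strictly larger slope; replacing it by its saturation (same rank, and degree at least as large, hence slope at least as large) I may assume it is a subbundle $F'\subset F$ with $\mu(F')>\mu(F)$ whose quotient $F''=F/F'$ is again locally free. Tensoring the short exact sequence $0\to F'\to F\to F''\to 0$ by $G$ stays exact, and the associated long exact cohomology sequence begins $0\to H^0(F'\otimes G)\to H^0(F\otimes G)$. Since $H^0(F\otimes G)=0$ by hypothesis, I conclude $H^0(F'\otimes G)=0$, and therefore $\chi(F'\otimes G)=-h^1(F'\otimes G)\le 0$.

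On the other hand, the same Riemann--Roch computation applied to $F'\otimes G$ gives $\chi(F'\otimes G)=\mathrm{rk}(F')\,\mathrm{rk}(G)\bigl(\mu(F')+\mu(G)+1-g\bigr)$, and substituting the key identity in the form $\mu(G)+1-g=-\mu(F)$ turns the parenthesis into $\mu(F')-\mu(F)>0$. This contradicts $\chi(F'\otimes G)\le 0$, so $F$ must be semistable. Interchanging the roles of $F$ and $G$ — using that a subbundle $G'\subset G$ gives $F\otimes G'\subset F\otimes G$, whence $H^0(F\otimes G')=0$ — yields semistability of $G$ by the identical argument. The only point requiring care is the passage to a saturated subbundle, so that the quotient is locally free and the cohomology sequence may be read off term by term; since saturation only increases degree it preserves the destabilizing slope inequality, so no finer choice of subobject is needed. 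I expect this saturation and Riemann--Roch bookkeeping, rather than any genuine difficulty, to be the main thing to get right.
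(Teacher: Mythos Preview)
Your argument is correct: the Riemann--Roch identity $\mu(F)+\mu(G)=g-1$ together with the inclusion $H^0(F'\otimes G)\hookrightarrow H^0(F\otimes G)=0$ for a destabilizing $F'$ forces $\chi(F'\otimes G)$ to be simultaneously nonpositive and strictly positive, a contradiction. (The saturation step is harmless but not strictly needed, since tensoring by the locally free $G$ preserves exactness regardless of whether $F/F'$ is locally free.)

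There is nothing to compare against: the paper does not supply its own proof of this lemma, citing it instead as \cite[Exercise 2.8]{P}.
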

Let $C$ be the associated curve of $V_4$. Then $C$ has genus $2$. We use $M_C$ to denote the moduli space of semistable vector bundles of rank $2$ and degree $0$ on $C$. $M_C$ was studied in detail in \cite{NR}. Suppose $J^1$ is the moduli space of line bundles of degree $1$. Then $C$ is naturally embedded in $J^1$ as a divisor. We denote the corresponding divisor by $\Theta$. To understand $M_C$, it suffices to understand the $3$-dimensional subvariety $S\subset M_C$ consisting of bundles with trivial determinant. \cite{NR} showed $S$ is naturally isomorphic to the projective space associated to $H^0(J^1,2\Theta)$. From this it is not hard to conclude:
\begin{thm}\cite[Theorem 7.3]{NR}
    $M_C$ is canonically isomorphic to the space of positive divisors on $J^1$
algebraically equivalent to $2\Theta$. In particular, $M_C$ is a projective bundle over the Jacobian of $C$.
\end{thm}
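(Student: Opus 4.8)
The plan is to construct the classical theta map and then reduce the global statement to the fixed-determinant case recorded above. Write $\mathrm{Jac}(C)=\mathrm{Pic}^0(C)$. For $E\in M$ and $L\in J^1$ the sheaf $E\otimes L$ has rank $2$ and degree $2$, so $\chi(E\otimes L)=2+2(1-g)=0$ since $g=2$; hence the jumping locus
\begin{align*}
    \Theta_E=\{\,L\in J^1 : h^0(C,E\otimes L)\neq 0\,\}
\end{align*}
should be a divisor. To make this precise I would fix a Poincar\'e bundle $\mathscr{P}$ on $C\times J^1$ and form $\det\big(Rp_{2*}(p_1^*E\otimes\mathscr{P})\big)^{-1}$, a line bundle on $J^1$ carrying a canonical section whose zero scheme is $\Theta_E$. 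A Grothendieck--Riemann--Roch computation identifies its class as algebraically equivalent to $2\Theta$, the factor $2$ reflecting $\mathrm{rk}(E)=2$. Since the determinant of cohomology is additive in short exact sequences, for a strictly semistable $E$ built from degree-$0$ line bundles $L_1,L_2$ one gets $\cO(\Theta_E)=\cO(\Theta_{L_1})\otimes\cO(\Theta_{L_2})$, and each $\Theta_{L_i}$ is a translate of $\Theta$ (being the preimage of the Abel--Jacobi image $\Theta\subset J^1$ under $L\mapsto L_i\otimes L$), so the section is nonzero and $\Theta_E$ is a genuine effective divisor $\equiv 2\Theta$. Carrying this out in families over $M$ produces a morphism $\vartheta\colon M\to\mathcal{N}$, where $\mathcal{N}$ is the variety of effective divisors on $J^1$ algebraically equivalent to $2\Theta$.

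Next I would fibre both sides over $\mathrm{Jac}(C)$. Sending an effective divisor to the class of its line bundle exhibits $\mathcal{N}$ as a bundle over $\mathrm{Pic}^0(J^1)$, which the principal polarization identifies with $\mathrm{Jac}(C)$; the fibre over a class $\mathcal{M}$ is the linear system $|\mathcal{M}|$. Every such $\mathcal{M}$ has $c_1(\mathcal{M})=2c_1(\Theta)$, hence is ample with $\mathcal{M}^2=4\Theta^2=8$, so Riemann--Roch and Kodaira vanishing on the abelian surface $J^1$ give $h^0(\mathcal{M})=4$ and $\mathcal{N}\to\mathrm{Jac}(C)$ is a $\pP^3$-bundle. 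On the other side the determinant $\det\colon M\to\mathrm{Jac}(C)$ has fibre over $\xi$ the moduli space $S_\xi$ of semistable bundles with $\det=\xi$, and $S_{\cO_C}=S$. The same GRR computation records how $[\Theta_E]$ depends on $\det E$, and I would use it to show that the induced map of bases $\mathrm{Jac}(C)\to\mathrm{Pic}^0(J^1)\cong\mathrm{Jac}(C)$ is an isomorphism of abelian surfaces; thus $\vartheta$ covers an isomorphism on the base.

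It then remains to show $\vartheta$ is an isomorphism on each fibre. Over the trivial determinant the image of $S$ lies in $|\cO_{J^1}(2\Theta)|=\pP\big(H^0(J^1,2\Theta)\big)$, and the result of \cite{NR} quoted above says precisely that this map $S\to\pP\big(H^0(J^1,2\Theta)\big)$ is an isomorphism. For an arbitrary $\xi$ I would reduce to this case: multiplication by $2$ on $\mathrm{Jac}(C)$ is surjective, so pick $N$ with $N^{\otimes 2}=\xi$; then $E\mapsto E\otimes N$ is an isomorphism $S\xrightarrow{\sim}S_\xi$ and $\Theta_{E\otimes N}$ is a translate of $\Theta_E$, so $\vartheta|_{S_\xi}$ differs from the isomorphism $\vartheta|_{S}$ by a translation of $J^1$ and is again an isomorphism onto the corresponding $\pP^3$. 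Combined with the base isomorphism this makes $\vartheta$ bijective on points; as $\vartheta$ is a proper morphism from the irreducible variety $M$ onto the smooth, hence normal, variety $\mathcal{N}$, in characteristic $0$ it is birational, so finite and birational onto a normal target, hence an isomorphism. This gives the canonical identification $M\cong\mathcal{N}$, and the ``in particular'' is immediate from the $\pP^3$-bundle structure of $\mathcal{N}$ over $\mathrm{Jac}(C)$ established above.

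The main obstacle is the Grothendieck--Riemann--Roch step of the first paragraph and its bookkeeping in the second: one must verify that $\det\big(Rp_{2*}(p_1^*E\otimes\mathscr{P})\big)$ is genuinely a line bundle whose canonical section cuts out $\Theta_E$, that its class equals $2\Theta$, and---most delicately---compute the precise dependence of $[\Theta_E]$ on $\det E$ well enough to see that the induced base map is an isomorphism rather than a mere isogeny. Compatibility of the construction in families (so that $\vartheta$ is a morphism and not just a set map) and the additivity check guaranteeing $\vartheta$ is defined on the strictly semistable locus are the remaining technical points.
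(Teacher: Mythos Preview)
The paper does not prove this statement at all: it is quoted verbatim as \cite[Theorem~7.3]{NR} and used as input. There is therefore nothing to compare your argument against in the paper itself; the author treats the result as a black box, recording only the preceding fact (also from \cite{NR}) that the trivial-determinant locus $S$ is identified with $\pP\big(H^0(J^1,2\Theta)\big)$, and then invoking the full statement.

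As a sketch of how one might derive the full statement from the fixed-determinant case, your outline is reasonable and essentially follows the standard route via the theta map and determinant of cohomology. One point you should be more careful with: for a \emph{stable} $E$ you have not argued that the canonical section of $\det\big(Rp_{2*}(p_1^*E\otimes\mathscr{P})\big)^{-1}$ is nonzero, i.e.\ that there exists some $L\in J^1$ with $h^0(E\otimes L)=0$. You handled this for the strictly semistable $E$ by additivity, but for stable $E$ it requires a separate argument (of Raynaud type); without it $\Theta_E$ could a priori be all of $J^1$ and $\vartheta$ would not be defined there. In the genus-$2$, rank-$2$ situation this is true, but it is an ingredient, not a formality. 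The other obstacles you flag in your last paragraph---especially that the induced base map $\mathrm{Jac}(C)\to\mathrm{Pic}^0(J^1)$ is an isomorphism and not merely an isogeny---are genuine and would need to be carried out to make the argument complete.
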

As a consequence, we have
\begin{cor}
     $M_C$ is a non-singular projective algebraic variety of dimension $5$.
\end{cor}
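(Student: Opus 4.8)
The plan is to deduce the corollary directly from the structural statement just recalled. By \cite[Theorem 7.3]{NR}, $M$ is a projective bundle over the Jacobian of $C$; equivalently, $M$ is the variety of effective divisors on $J^1$ algebraically equivalent to $2\Theta$, fibred over $\mathrm{Pic}^0(J^1)\cong\mathrm{Jac}(C)$ by $D\mapsto\cO_{J^1}(D-2\Theta)$, with fibre over $[L]$ the complete linear system $\vert L(2\Theta)\vert$. Since $C$ has genus $2$, its Jacobian is an abelian variety of dimension $2$, hence a smooth projective surface; as a projective bundle over a smooth projective base, $M$ is therefore automatically smooth and projective, and the only thing left to pin down is the dimension of the fibres.

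For that I would compute $h^0$ of a line bundle on $J^1$ algebraically equivalent to $2\Theta$. The divisor $\Theta$ is a principal polarization on the abelian surface $J^1$, so $\Theta^2=2$. Any line bundle $N$ with $N\equiv\cO_{J^1}(2\Theta)$ has the form $N=\cO_{J^1}(2\Theta)\otimes P$ with $P\in\mathrm{Pic}^0(J^1)$, so $N$ is a translate of the ample bundle $\cO_{J^1}(2\Theta)$, in particular ample, and hence has vanishing higher cohomology. Riemann--Roch on $J^1$ then gives
\[
    h^0(J^1,N)=\chi(N)=\tfrac{1}{2}(2\Theta)^2=2\,\Theta^2=4 ,
\]
so every fibre is a $\pP^3$. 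Consequently $\dim M=\dim\mathrm{Jac}(C)+3=2+3=5$, which completes the proof.

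The argument is essentially bookkeeping on top of \cite{NR}; the only input that is not purely formal is the Riemann--Roch computation, and there the one point to be careful about is that $\Theta$ really is a principal polarization (so that $\Theta^2=2$) and that passing from $\cO_{J^1}(2\Theta)$ to an algebraically equivalent line bundle does not change $h^0$ --- both being standard facts about line bundles on abelian varieties.
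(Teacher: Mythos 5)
Your argument is correct and follows the same route as the paper, which simply reads the corollary off from \cite[Theorem 7.3]{NR}: $M$ is a projective bundle over the (smooth, $2$-dimensional) Jacobian of $C$ with fibre $|2\Theta|$. Your Riemann--Roch computation $h^0(N)=\tfrac{1}{2}(2\Theta)^2=4$ (using that $\Theta$ is a principal polarization on the abelian surface $J^1$, so $\Theta^2=2$) correctly supplies the fibre dimension $3$ that the paper leaves implicit, giving $\dim M=2+3=5$.
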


\subsection{Instanton bundles and Ulrich bundles} Let $Y$ be a Fano threefold of index $2$. By definition an minimal \emph{instanton bundle} is a stable vector bundle $E$ of rank $2$ with Chern classes $c_1(E)=0$, $c_2(E)=2$, enjoying the instantonic condition
\begin{align*}
    H^1(Y,E(-1))=0.
\end{align*}
We will see in Theorem \ref{auto} that the instantonic condition is automatically satisfied on $V_4$. We use $M_{V_4}$ to denote the moduli space of semistable sheaves of rank $2$  with Chern classes $c_1(E)=0$, $c_2(E)=2$ and $c_3(E)=0$. It is clear that the moduli space of minimal instanton bundles $M^{inst}(V_4)$ is an open subscheme of $M_{V_4}$.\

We also recall two equivalent definitions of an Ulrich bundle.
\begin{defns}
    Let $X\subset \pP^N$ be a smooth projective variety of dimension $n$ and degree $d$. An \emph{Ulrich bundle} $E$ is a vector bundle on $X$ satisfying 
    \begin{align*}
        H^*(X,E(-t))=0
    \end{align*}
    for all $t=1,\ldots,n$. Equivalently, it is a vector bundle of rank $r$ satisfying
    \begin{align*}
        H^i(X,E(t))=0
    \end{align*}
    for all $t\in \Z$ and $0<i<n$ and having Hilbert polynomial $P_E(t)=dr\binom{n+t}{t}$
\end{defns}
We list a few well-known facts about Ulrich bundles that will be useful to us (see \cite[Section 3,4]{Beau}):
\begin{itemize}
    \item There are no Ulrich line bundles on a variety $X$ with $\mathrm{Pic}(X)=\Z\cO_X(1)$ and degree $d>1$.
    \item An Ulrich bundle is semistable. If it is not stable, it is an extension of Ulrich bundles of smaller ranks. 
\end{itemize}

To see the relation between minimal instanton bundles and Ulrich bundles of rank $2$ on $V_4$, we first recall the following result:
\begin{prop}\cite[Proposition 3.4]{CKL}
    Let $E$ be an Ulrich bundle of rank $r$ on $V_4$, then $\mu(E)=1$.
\end{prop}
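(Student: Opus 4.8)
The plan is to reduce the assertion to one Chern-number identity and then extract it from Hirzebruch--Riemann--Roch. Since $\mathrm{Pic}(V_4)=\Z[h]$ we may write $c_1(E)=a[h]$ with $a\in\Z$, and with the intersection numbers $h^2=4l$, $h^3=4p$, $h\cdot l=p$ one gets
\begin{align*}
    \mu(E)=\frac{c_1(E)\cdot h^2}{r\cdot h^3}=\frac{a}{r}.
\end{align*}
Hence $\mu(E)=1$ is equivalent to $a=r$, i.e.\ $c_1(E)=r[h]$.

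To prove $a=r$ I would compute the Hilbert polynomial $t\mapsto\chi(E(t))$ in two ways. By the definition of an Ulrich bundle it equals $P_E(t)=4r\binom{t+3}{t}=\tfrac{2r}{3}(t+1)(t+2)(t+3)$; in particular its coefficient of $t^2$ is $\tfrac{2r}{3}\cdot 6=4r$. (Equivalently one can argue from the vanishings $H^*(V_4,E(-t))=0$ for $t=1,2,3$, which force the cubic $\chi(E(\,\cdot\,))$ to vanish at $-1,-2,-3$, together with the leading coefficient $\tfrac{1}{6}rh^3=\tfrac{2r}{3}$ given by Riemann--Roch.) On the other hand, Hirzebruch--Riemann--Roch with $\mathrm{td}(\mathcal{T}_{V_4})=1+h+\tfrac{7}{3}l+p$ and $\mathrm{ch}(E(t))=\mathrm{ch}(E)\,e^{th}$ gives
\begin{align*}
    \chi(E(t))=\int_{V_4}\mathrm{ch}(E)\,e^{th}\,\mathrm{td}(\mathcal{T}_{V_4}),
\end{align*}
and a short computation (using that the parts of $\mathrm{td}(\mathcal{T}_{V_4})$ of cohomological degree $\le 2$ are $1+h$, plus the intersection numbers above) shows its coefficient of $t^2$ equals $2(a+r)$.

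Comparing the two expressions for the coefficient of $t^2$ gives $2(a+r)=4r$, hence $a=r$ and $\mu(E)=1$. As a byproduct, matching the remaining coefficients of $\chi(E(t))$ also pins down $\mathrm{ch}_2(E)=r\,l$ and $\mathrm{ch}_3(E)=-\tfrac{r}{3}p$. I do not anticipate any genuine obstacle: the whole argument is a one-line Riemann--Roch comparison once set up. The only points needing care are that $c_1(E)$ is an integral multiple of $[h]$ (because $\mathrm{Pic}(V_4)=\Z[h]$), so that the comparison of coefficients is meaningful, and that one must invoke the full Ulrich normalization — either the exact Hilbert polynomial $P_E(t)=4r\binom{t+3}{t}$, or equivalently all three vanishings $H^*(V_4,E(-t))=0$ for $t=1,2,3$ — since it is precisely this that forces the coefficient of $t^2$ to be $4r$.
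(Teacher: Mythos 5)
Your argument is correct: the comparison of the $t^2$-coefficients of $\chi(E(t))$ computed from the Ulrich Hilbert polynomial $4r\binom{t+3}{3}$ and from Hirzebruch--Riemann--Roch does give $2(a+r)=4r$, hence $c_1(E)=r[h]$ and $\mu(E)=1$ (and your byproducts $\mathrm{ch}_2(E)=rl$, $\mathrm{ch}_3(E)=-\tfrac{r}{3}p$ also check out). The paper itself offers no proof, citing \cite[Proposition 3.4]{CKL}, and the cited argument is exactly this kind of Riemann--Roch comparison (equivalently, using that the vanishings $H^*(E(-t))=0$ for $t=1,2,3$ force the roots $-1,-2,-3$), so your route is essentially the standard one.
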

As a result, if $E$ is a rank $2$ Ulrich bundle, $E(-1)$ is a semistable rank $2$ bundle with Chern class $c_1=0$. Moreover, we have the Hilbert polynomial
\begin{align*}
    P_{E(-1)}(t)=8\binom{t+2}{3}.
\end{align*}
Using Riemann-Roch, it is not hard to see $c_2(E(-1))=2$. 
Combine this with the following result:
\begin{lem}\cite{Ku2}
    Let $E$ be an minimal instanton bundle on a Fano threefold of index $2$. Then 
    \begin{align*}
            H^*(E(t))=0
    \end{align*}
for $t=0,-1,-2$.
\end{lem}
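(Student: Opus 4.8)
The plan is to reduce the statement, via Serre duality and Riemann--Roch, to the single vanishing $H^1(E)=0$, and to get the latter by restriction to a hyperplane section. Since $Y$ has index $2$, $\omega_Y=\cO_Y(-2)$; and since $Y$ is Fano, $c_1(E)=0$ forces $\det E=\cO_Y$, so $E^\vee\cong E$. Serre duality then reads $h^i(E(t))=h^{3-i}(E(-2-t))$, so the twists $t=0$ and $t=-2$ are exchanged and $t=-1$ is fixed. A short Riemann--Roch computation gives $\chi(E(t))=2\chi(\cO_Y(t))-2(t+1)$ (using $c_2(E)\cdot H=2$), and since $\chi(\cO_Y)=1$, $\chi(\cO_Y(-1))=0$, $\chi(\cO_Y(-2))=-1$ (Kodaira vanishing and Serre duality), this yields $\chi(E(t))=0$ for $t=0,-1,-2$.

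First I would dispose of $H^0$ and $H^3$. Being stable, $E$ is $\mu$-semistable, so for $t<0$ a nonzero section of $E(t)$ would give a rank-one subsheaf $\cO_Y\subset E(t)$ of slope $0>\mu(E(t))=t$, which is impossible; hence $H^0(E(-1))=H^0(E(-2))=0$. For $t=0$, a nonzero section $s\colon\cO_Y\to E$ is injective, since its image $\cO_Y/\ker s$ is a subsheaf of the torsion-free $E$ and so torsion-free, forcing $\ker s=0$; but the formula above gives $\chi(\cO_Y(n))-\tfrac12\chi(E(n))=n+1>0$ for $n\ge0$, so the subsheaf $\cO_Y\subset E$ has strictly larger reduced Hilbert polynomial than $E$, contradicting stability. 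Thus $H^0(E)=0$, and dually $H^3(E(t))=0$ for $t=0,-1,-2$; moreover $H^2(E(-1))=H^1(E(-1))^\vee=0$ by the instantonic hypothesis. Since $\chi(E(t))=0$ we have $h^1(E(t))=h^2(E(t))$ for each of the three twists, and as $h^2(E)=h^1(E(-2))$ and $h^2(E(-2))=h^1(E)$ by Serre duality, the whole statement reduces to proving $H^1(E)=0$.

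For this I would restrict to a general member $S\in|\cO_Y(1)|$, a del Pezzo surface with $\omega_S=\cO_S(-1)$ and $\chi(\cO_S)=1$. The bundle $E_S:=E|_S$ has $\det E_S=\cO_S$, so $E_S^\vee\cong E_S$, and $c_2(E_S)=2$, hence Riemann--Roch on $S$ gives $\chi(E_S)=0$; moreover $E_S$ is $\mu$-semistable, being the restriction of a $\mu$-semistable bundle to a general hyperplane section. Since $\mu(E_S(-1))=-1<0$, $\mu$-semistability gives $H^0(E_S(-1))=0$, whence $H^2(E_S)=H^0(E_S(-1))^\vee=0$ by Serre duality on $S$; and the restriction sequence $0\to E(-1)\to E\to E_S\to0$, together with $H^0(E)=0$ and $H^1(E(-1))=0$, forces $H^0(E_S)=0$. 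Therefore $\chi(E_S)=-h^1(E_S)=0$, that is, $H^1(E_S)=0$, and the segment $H^1(E(-1))\to H^1(E)\to H^1(E_S)$ of the long exact sequence of the same triangle, with vanishing ends, gives $H^1(E)=0$. Feeding this back through $\chi(E(t))=0$ and the Serre-duality relations of the previous paragraph then gives $H^2(E)=H^1(E(-2))=H^2(E(-2))=0$ as well, which completes the proof.

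The step I expect to be the main obstacle is precisely this surface restriction: one must invoke, with some care since the linear system $|\cO_Y(1)|$ has low degree, that $E_S$ is $\mu$-semistable for general $S$, and then keep track of the three facts $\chi(E_S)=0$, $H^0(E_S)=0$, $H^2(E_S)=0$ so that the Euler characteristic alone pins down $h^1(E_S)$. Everything else is Serre duality, slope (semi)stability, and Riemann--Roch bookkeeping.
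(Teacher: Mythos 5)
Your argument is correct. Note that the paper gives no internal proof of this lemma at all---it is quoted directly from [Ku2]---so what you have written is a self-contained justification of a statement the paper only cites. Your chain of reductions is sound: self-duality $E^\vee\cong E$ from $c_1(E)=0$, Serre duality with $\omega_Y=\cO_Y(-2)$, the Riemann--Roch identity $\chi(E(t))=2\chi(\cO_Y(t))-2(t+1)$, vanishing of $H^0$ and $H^3$ from stability, and $H^\bullet(E(-1))=0$ from the instantonic condition correctly reduce everything to $H^1(E)=0$; the restriction to a general $S\in|\cO_Y(1)|$ then closes the loop via $\chi(E_S)=\chi(E)-\chi(E(-1))=0$, $H^0(E_S)=0$ from the cohomology sequence, and $H^2(E_S)=H^0(E_S(-1))^*=0$ from $\mu$-semistability of $E_S$. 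The one step that needs a citation rather than an assertion is precisely that $\mu$-semistability of the restriction: this is Maruyama's restriction theorem for $\mu$-semistable sheaves of rank less than $\dim Y$ ([M, Theorem 3.1]), which is exactly the tool this paper invokes in the proof of Theorem 3.2. The only caveat against the full generality you claim (``a Fano threefold of index $2$'') is that for degrees $1$ and $2$ the class $\cO_Y(1)$ is not very ample, so ``general hyperplane section'' requires a little extra care there; for $V_4$, the only case used in this paper, the argument is airtight. Your route runs parallel to, but is independent of, the paper's Theorem 3.2, which establishes the stronger $V_4$-specific vanishing $H^1(E(n))=0$ for all $n$ by restricting to a quartic del Pezzo surface and quoting Druel's surface lemma; your proof has the advantage of using only Serre duality, Riemann--Roch and the restriction theorem, with no input from the classification of bundles on del Pezzo surfaces.
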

We obtain
\begin{prop}
      A vector bundle $E$ on $V_4$ is an minimal instanton bundle if and only if $E(1)$ is a stable Ulrich bundle of rank 2.
\end{prop}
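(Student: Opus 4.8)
The plan is to prove the two implications separately; both reduce to rewriting the relevant cohomology vanishing after a twist by $\cO_{V_4}(1)$ and using that (semi)stability of a sheaf is preserved under twisting by a line bundle.

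First I would show that a minimal instanton bundle $E$ on $V_4$ produces a stable Ulrich bundle $E(1)$. Since $V_4$ is a Fano threefold of index $2$, the lemma of Kuznetsov recalled above applies and gives $H^*(E(t))=0$ for $t=0,-1,-2$; equivalently $H^*(E(1)(-t))=0$ for $t=1,2,3$. As $\dim V_4=3$, this is exactly the defining vanishing of an Ulrich bundle, so $E(1)$ is an Ulrich bundle of rank $2$, and it is stable because $E$ is.

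For the converse, suppose $E(1)$ is a stable Ulrich bundle of rank $2$. By the Proposition on slopes of Ulrich bundles, $\mu(E(1))=1$; since twisting by $\cO_{V_4}(1)$ raises the slope of a sheaf by $1$, this gives $\mu(E)=0$, and as $\mathrm{Pic}(V_4)=\Z\cO_{V_4}(1)$ we conclude $c_1(E)=0$. The Ulrich vanishing $H^*(E(1)(-2))=H^*(E(-1))=0$ contains in particular the instantonic condition $H^1(E(-1))=0$. To identify $c_2$, I would use that the Hilbert polynomial of the rank $2$ Ulrich bundle $E(1)$ is $8\binom{t+3}{3}$, hence $P_E(t)=8\binom{t+2}{3}$, and compare with the Riemann--Roch expansion of $\chi(E(t))$ on $V_4$ (with the Todd class recorded in Section 2.3) to read off $c_2(E)=2$, exactly as in the discussion preceding the Kuznetsov lemma; note $c_3(E)=0$ is automatic since $E\cong E^\vee$ for a rank $2$ bundle with vanishing $c_1$. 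Finally $E=(E(1))(-1)$ is stable, so $E$ is a stable rank $2$ bundle with $c_1=0$, $c_2=2$ satisfying the instantonic condition, i.e. a minimal instanton bundle.

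This is really a bookkeeping argument and I do not expect a genuine obstacle. The only points that deserve care are: using the \emph{cohomological} form of the Ulrich hypothesis, namely $H^*(E(1)(-t))=0$ for $t=1,2,3$, from which both halves of the Kuznetsov vanishing and the instantonic condition fall out immediately, rather than the ``interior vanishing plus Hilbert polynomial'' reformulation; and keeping track of how slope and Chern classes transform under the twist by $\cO_{V_4}(1)$, which is the only place a sign could slip.
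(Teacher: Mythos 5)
Your argument is correct and matches the paper's: the paper likewise combines Kuznetsov's vanishing $H^*(E(t))=0$ for $t=0,-1,-2$ with the slope computation $\mu(E)=1$ for Ulrich bundles and the Hilbert polynomial $P_{E(-1)}(t)=8\binom{t+2}{3}$ to identify the two notions up to the twist by $\cO_{V_4}(1)$. No substantive difference.
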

If we use $M_2^{sU}$ to denote the moduli space of stable Ulrich bundles of rank $2$. It follows immediately from this result that:
\begin{align*}
   M^{sU}_2\simeq M^{inst}(V_4) 
\end{align*}

To describe $M^{inst}(V_4)$, we first recall that by definition a second Raynaud bundle on $C$ is the (shift of the) Fourier-Mukai transform of
the bundle $\cO_{\mathrm{Pic}(C)}(-2\Theta)$ with the kernel given by the Poincare bundle, where $\Theta$ is the theta divisor of $\mathrm{Pic}(C)$ (see \cite{Ku2}\cite{P}). It is a semistable rank $4$ vector bundle of degree $4$ on $C$.
Kuznetsov gave the following description of the moduli space $M^{inst}_n(V_4)$ of instanton bundles of charge $n$ 
\begin{thm}\cite[Theorem 5.10]{Ku2}
   Let $\mathcal{R}$ be a second Raynaud bundle. The moduli space $M^{inst}_n(V_4)$ of instantons of charge $n$ is isomorphic to the moduli space of simple vector bundles $\mathcal{F}$ on $C$ of rank $n$ and degree $0$ such that 
   \begin{align*}
       &\mathcal{F}^*\simeq \tau^*\mathcal{F},\\
       &H^0(C,\mathcal{F}\otimes \mathcal{S}_y)=0 \text{ for all } y\in V_4,\\
       &\dim \Hom(\mathcal{F},\mathcal{R})=\dim\Ext^1(\mathcal{F},\mathcal{R})=n-2.
   \end{align*}
\end{thm}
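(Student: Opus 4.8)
The idea is to transport an instanton bundle across the equivalence $\Phi\colon\mathcal{D}^b(C)\simeq\mathcal{B}_{V_4}$, after first pushing it into $\mathcal{B}_{V_4}$ by a mutation inside the semiorthogonal decomposition $\mathcal{D}^b(V_4)=\langle\mathcal{B}_{V_4},\cO_{V_4},\cO_{V_4}(1)\rangle$. First I would check that an instanton bundle $E$ of charge $n$ is right orthogonal to $\cO_{V_4}(1)$, i.e.\ $H^\bullet(E(-1))=0$: the group $H^1$ vanishes by the instantonic condition, while $H^0$, $H^2$, $H^3$ vanish because $E$ is $\mu$-stable of slope $0$, self-dual ($E\simeq E^*$ since $c_1(E)=0$ and $\mathrm{Pic}(V_4)=\Z$), and $\omega_{V_4}=\cO_{V_4}(-2)$. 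So $E\in\langle\mathcal{B}_{V_4},\cO_{V_4}\rangle$, and the left mutation $\mathbb{L}_{\cO_{V_4}}E$, defined by the triangle $\mathrm{R}\Gamma(E)\otimes\cO_{V_4}\to E\to\mathbb{L}_{\cO_{V_4}}E$, lies in $\mathcal{B}_{V_4}$. I would then put $\mathcal{F}:=\Phi^{-1}(\mathbb{L}_{\cO_{V_4}}E)[-1]$; substituting $\mathrm{ch}(E)=2-nl$ and $\chi(E)=2-n$ into the Grothendieck--Riemann--Roch formula of Lemma~\ref{RR} forces $\mathcal{F}$ to be numerically a rank-$n$, degree-$0$ vector bundle. (For $n=2$ one has $\mathrm{R}\Gamma(E)=0$, so $\mathbb{L}_{\cO_{V_4}}E=E$ and $\mathcal{F}=\Phi^{-1}(E)[-1]$, in agreement with the earlier statement of the paper.)

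The core of the argument is then to upgrade these numerics to the conditions in the statement. Simplicity of $\mathcal{F}$ is equivalent to simplicity of $E$, because $\Phi$ is fully faithful and $\mathbb{L}_{\cO_{V_4}}$ is fully faithful on $\langle\mathcal{B}_{V_4},\cO_{V_4}\rangle$, and $E$ stable implies $E$ simple. That $\mathcal{F}$ is an honest sheaf rather than a genuine two-term complex I expect to be exactly equivalent to $H^0(C,\mathcal{F}\otimes\mathcal{S}_y)=0$ for all $y\in V_4$: the fibrewise behaviour of the kernel $\mathcal{S}$ translates this vanishing into the statement that the cohomology sheaves of $\Phi^{-1}(\mathbb{L}_{\cO_{V_4}}E)$ sit in a single degree. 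The relation $\mathcal{F}^*\simeq\tau^*\mathcal{F}$ is the image of $E\simeq E^*$ under $\Phi$, via a duality for the kernel of the shape $\mathcal{S}^*\simeq(\tau\times\id)^*\mathcal{S}\otimes(\text{line bundle})$, itself a consequence of $\det\mathcal{S}=\xi\boxtimes\cO_{V_4}(-1)$ together with how $\xi$ transforms under the hyperelliptic involution in Mukai's realisation of $V_4$ as a moduli space. Finally, the Raynaud bundle $\mathcal{R}$ should be taken (up to shift) as the $\Phi$-preimage of an object of $\mathcal{B}_{V_4}$ obtained by mutating the exceptional pair $\cO_{V_4},\cO_{V_4}(1)$ into $\mathcal{B}_{V_4}$; with this choice $\Hom^\bullet_C(\mathcal{F},\mathcal{R})$ computes a $\Hom$-complex on $V_4$ that records how far $E$ itself (as opposed to $\mathbb{L}_{\cO_{V_4}}E$) is from lying in $\mathcal{B}_{V_4}$, and the displayed numerical conditions then drop out of $\chi(E)=2-n$, the fact that $\mathcal{R}$ is forced to have slope $1$ so that $\chi_C(\mathcal{F},\mathcal{R})=0$, and the extreme-degree vanishings already used (from $\mu$-stability, self-duality and Serre duality).

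The remaining step is the inverse construction: starting from a simple rank-$n$, degree-$0$ bundle $\mathcal{F}$ satisfying the three conditions, form $\Phi(\mathcal{F})[1]\in\mathcal{B}_{V_4}$, apply the right mutation $\mathbb{R}_{\cO_{V_4}}$ (the $\mathcal{R}$-condition ensuring this is an actual sheaf, the $\mathcal{S}_y$-vanishing that it is locally free), and verify that the resulting rank-$2$ sheaf $E$ has $c_1=0$ (from $\mathcal{F}^*\simeq\tau^*\mathcal{F}$) and $c_2=n$ (Riemann--Roch), is stable, and satisfies $H^1(E(-1))=0$ (since $E\perp\cO_{V_4}(1)$ by construction). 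Because $\Phi$ is a Fourier--Mukai functor and the mutation functors are given by kernels on $V_4\times V_4$, the whole correspondence is induced by a Fourier--Mukai-type kernel and so works in families; this promotes the object-level bijection to a morphism of moduli schemes admitting an inverse, which is therefore an isomorphism. I expect the genuinely hard part to be the technical heart of the middle step: proving that the transported, mutated object is locally free precisely under the stated $\mathcal{S}_y$-vanishing, pinning down $\mathcal{R}$ so that the value $n-2$ in the $\Hom$ and $\Ext^1$ conditions is exactly right, and carrying everything through in families so that one gets an isomorphism of schemes rather than only a bijection of closed points.
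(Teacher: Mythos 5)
This statement is not proved in the paper at all: it is quoted verbatim from Kuznetsov \cite[Theorem 5.10]{Ku2} and used as a black box, so there is no internal argument to compare your proposal against. Judged against Kuznetsov's actual proof, your outline does follow the right strategy: check $H^\bullet(E(-1))=0$ so that $E\in\langle\mathcal{B}_{V_4},\cO_{V_4}\rangle$, project into $\mathcal{B}_{V_4}$ by mutating through $\cO_{V_4}$, transport across $\Phi$, and read off the numerical invariants from Lemma \ref{RR} (your computation $\mathrm{ch}(\mathbb{L}_{\cO_{V_4}}E)=n-nl=\mathrm{ch}(\Phi(\mathcal{F})[1])$ is correct, as is the identification of $\mathcal{R}$ with the $\Phi$-preimage of a mutation of the exceptional pair).

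One step as you state it is wrong, and it is not cosmetic: $\mathbb{L}_{\cO_{V_4}}$ is \emph{not} fully faithful on $\langle\mathcal{B}_{V_4},\cO_{V_4}\rangle$ --- it is the projection functor onto $\mathcal{B}_{V_4}$ and kills $\cO_{V_4}$. Since for charge $n>2$ one has $H^1(E)\simeq\C^{n-2}\neq 0$, the instanton $E$ does not lie in ${}^{\perp}\cO_{V_4}$, so simplicity of $\mathcal{F}=\Phi^{-1}(\mathbb{L}_{\cO_{V_4}}E)[-1]$ does not follow formally from simplicity of $E$. Kuznetsov handles this by passing to the acyclic extension $0\to E\to\tilde{E}\to H^1(E)\otimes\cO_{V_4}\to 0$ and proving directly that $\tilde{E}$ is simple and lies in $\mathcal{B}_{V_4}$; this is also exactly where the condition $\dim\Hom(\mathcal{F},\mathcal{R})=\dim\Ext^1(\mathcal{F},\mathcal{R})=n-2$ comes from (it records $\dim H^1(E)=n-2$, i.e.\ the size of the extension needed to undo the mutation). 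The rest of your sketch (the $\mathcal{S}_y$-vanishing as the fibrewise local-freeness criterion, $\mathcal{F}^*\simeq\tau^*\mathcal{F}$ from self-duality of $E$ and the duality of the kernel $\mathcal{S}$, and the family/kernel argument for the scheme-level isomorphism) matches Kuznetsov's proof in outline, but as you acknowledge these are precisely the points that carry the technical weight and are only asserted, not proved, in your proposal. For the purposes of the present paper, though, the correct "proof" is simply the citation.
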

\begin{rem}
Apply this theorem to $n=2$ and note that in this case the last equation shows that $\mathcal{F}$ is semistable by Lemma \ref{scurve}, so we see that
\begin{align*}
   M^{sU}_2\simeq M^{inst}(V_4) \subset M_C.
\end{align*}
See also \cite[Theorem 4.14]{CKL} for a similar result from the perspective of Ulrich bundles.
\end{rem}

\subsection{Curves and surfaces of low degrees}
In this section we recall some results about varieties of (almost) minimal degrees in projective spaces. Recall a variety $V\subset \pP^n$ is said to be \emph{non-degenerate} if $V$ is not contained in any hyperplane in $\pP^n$.\\

Regarding the degree of a curve we have first the classical Castelnuovo Bound:
\begin{thm}[Castelnuovo Bound]
    Let $C\subset \pP^n$ be an irreducible non-degenerate smooth curve of degree $d$ and genus $g$. Let $m,\epsilon$ be the quotient and remainder when dividing $d-1$ by $n-1$, and $\pi(d,n)=(n-1)m(m-1)/2+m\epsilon$, then
    \begin{align*}
        g\leq \pi(d,n).
    \end{align*}
\end{thm}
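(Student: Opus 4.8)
The plan is to run Castelnuovo's classical hyperplane-section argument. Let $H\subset\pP^n$ be a general hyperplane and set $\Gamma=C\cap H$, a reduced set of $d$ points in $H\cong\pP^{n-1}$. The first step is to show that $\Gamma$ is in \emph{linearly general position}, i.e.\ every $n$ of its points span $H$ (in particular, using that $C$ is non-degenerate, $\Gamma$ spans $H$). Over $\C$ this is the uniform position principle: the monodromy of the family of hyperplane sections of an irreducible non-degenerate curve acts on the points of a general section as the full symmetric group, which forces general position; alternatively one argues directly from Bertini together with an irreducibility statement for the incidence variety of pointed hyperplane sections.

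The second step rewrites the genus as a sum of ``defects''. For every $k\ge 1$, multiplication by the linear form cutting out $H$ gives a short exact sequence $0\to\cO_C(k-1)\to\cO_C(k)\to\cO_\Gamma(k)\to 0$, hence an exact sequence $0\to H^0(\cO_C(k-1))\to H^0(\cO_C(k))\to H^0(\cO_\Gamma(k))$ with $H^0(\cO_\Gamma(k))\cong\C^d$. Thus $\delta_C(k):=h^0(\cO_C(k))-h^0(\cO_C(k-1))$ is the dimension of the image of $H^0(\cO_C(k))$ in $H^0(\cO_\Gamma(k))$, so $\delta_C(k)\le d$, with equality for $k\gg 0$ by Serre vanishing and Riemann--Roch on $C$. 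Telescoping $h^0(\cO_C(k))=1+\sum_{j=1}^k\delta_C(j)$ against $h^0(\cO_C(k))=dk+1-g$ for large $k$ yields
\begin{align*}
g=\sum_{k\ge 1}\bigl(d-\delta_C(k)\bigr).
\end{align*}
Moreover the composite $H^0(\cO_{\pP^n}(k))\to H^0(\cO_C(k))\to H^0(\cO_\Gamma(k))$ is just restriction of degree-$k$ forms on $\pP^n$ to $\Gamma$; since $\Gamma\subset H$ and $H^0(\cO_{\pP^n}(k))$ surjects onto $H^0(\cO_H(k))$, this image has dimension exactly $h_\Gamma(k)$, the Hilbert function of $\Gamma\subset\pP^{n-1}$. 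Hence $\delta_C(k)\ge h_\Gamma(k)$ and $g\le\sum_{k\ge1}\bigl(d-h_\Gamma(k)\bigr)$.

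The third and most substantial step is the combinatorial lower bound $h_\Gamma(k)\ge\min\bigl(d,\,k(n-1)+1\bigr)$ for points in linearly general position. I would prove it by showing that any subset $S\subset\Gamma$ with $|S|=N:=\min(d,k(n-1)+1)$ imposes independent conditions on $|\cO_{\pP^{n-1}}(k)|$: fixing $p\in S$, the remaining $N-1\le k(n-1)$ points partition into at most $k$ groups of at most $n-1$ points; by linear general position each group lies in a hyperplane avoiding $p$; and the product of these linear forms, padded with further hyperplanes missing $p$ so as to have degree exactly $k$, is a degree-$k$ form vanishing on $S\setminus\{p\}$ but not at $p$. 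Feeding this bound into the previous display, only the terms with $k\le m$ survive and one computes
\begin{align*}
g\le\sum_{k=1}^{m}\bigl(d-1-k(n-1)\bigr)=m(d-1)-(n-1)\tfrac{m(m+1)}{2}=\tfrac{(n-1)m(m-1)}{2}+m\epsilon=\pi(d,n),
\end{align*}
where we used $d-1=m(n-1)+\epsilon$ with $0\le\epsilon\le n-2$. The main obstacle is the first step: establishing that a generic hyperplane section is in linearly general position is precisely where irreducibility of $C$ and working over $\C$ are essential; once the uniform position principle and the elementary Castelnuovo lemma of the third step are granted, the rest is bookkeeping.
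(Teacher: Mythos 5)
Your argument is correct, but note that the paper does not actually prove this statement: it is recalled as a classical theorem with a pointer to Chapter III of [ACGH], and your write-up is precisely the standard Castelnuovo argument given there (uniform position for a general hyperplane section $\Gamma=C\cap H$, the telescoping identity $g=\sum_{k\ge1}(d-\delta_C(k))$, the bound $h_\Gamma(k)\ge\min(d,\,k(n-1)+1)$ for points in linearly general position, and the closing arithmetic with $d-1=m(n-1)+\epsilon$). All steps, including the padding of linear forms to degree exactly $k$ and the observation that the terms with $k>m$ vanish, are handled correctly, so there is nothing to fix.
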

In fact this bound is sharp, and extremal curves (curves with $g=\pi(d,n)$) can be explicitly described (See Chapter $3$ of \cite{ACGH}). This result was improved by Eisenbud and Harris:
\begin{thm}\cite[Theorem 3.15]{EH} \label{pi1}
    With the same notation as above, for any $d$ and $n\geq 4$, set $m_1$,$\epsilon_1$ to be the quotient and remainder when dividing $d-1$ by $n$. Let $\mu_1=1$ if $\epsilon_1=n-1$ and $0$ otherwise.
    Let
    \begin{align*}
        \pi_1(d,n)=\binom{m_1}{2}n+m_1(\epsilon_1+1)+\mu_1,
    \end{align*}
    then
    \begin{enumerate}
        \item if $g>\pi_1(d,n)$ and $d\geq 2n+1$, then $C$ lies on a surface of degree $n-1$; and 
        \item if $g=\pi_1(d,n)$ and $d\geq 2n+3$, then $C$ lies on a surface of degree $n$ or $n-1$.
    \end{enumerate}
\end{thm}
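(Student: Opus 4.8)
The plan is to sharpen the classical Castelnuovo argument, following the strategy of \cite{EH} and \cite[Ch.~III]{ACGH}. Fix a general hyperplane $H\subset\pP^n$ and set $\Gamma=C\cap H$, a set of $d$ points spanning $H\cong\pP^{n-1}$. The two standard inputs are: (i) Harris's \emph{uniform position principle}, which guarantees that $\Gamma$ is in uniform position, so that its Hilbert function $h_\Gamma(k)=\binom{n-1+k}{k}-h^0(\mathcal I_{\Gamma/H}(k))$ obeys Castelnuovo-type inequalities; and (ii) the identity $g=\sum_{k\ge1}\bigl(d-h_\Gamma(k)\bigr)$, obtained by comparing, for $k\gg0$, the cohomology of $\cO_C(k)$ with that of the sequences $0\to\cO_C(k-1)\to\cO_C(k)\to\cO_\Gamma\to0$. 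Thus everything reduces to bounding $h_\Gamma$ from below under the stated non-containment hypotheses.

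The classical estimate $h_\Gamma(k)\ge\min(d,\,k(n-1)+1)$ already gives $g\le\pi(d,n)$; to do better I would combine two ingredients. First, a \emph{lifting theorem}: when $d$ is large enough --- precisely $d\ge 2n+1$ in case~(1), $d\ge 2n+3$ in case~(2) --- a curve (resp.\ any subvariety) of degree $\le n-1$ (resp.\ $\le n$) through $\Gamma$ in $H$ lifts to a surface of the same degree through $C$ in $\pP^n$. Contrapositively, if $C$ lies on no surface of degree $n-1$ then $\Gamma$ lies on no curve of degree $n-1$ in $H$; in particular $\Gamma$ is not contained in a rational normal curve. Second, \emph{Castelnuovo's lemma and its propagation}: a set of $d\ge 2n+1$ points in uniform position in $\pP^{n-1}$ whose degree-$2$ Hilbert function attains the minimal value $2n-1$ must lie on a rational normal curve; hence if $\Gamma$ avoids the rational normal curve then $h_\Gamma(2)\ge 2n$, and propagating this gain through the $h$-vector --- again using uniform position --- yields $h_\Gamma(k)\ge\min(d,\,kn)$ for all $k\ge1$, the Hilbert function of $d$ general points on an elliptic normal curve of degree $n$ in $\pP^{n-1}$, with a one-unit correction in the boundary case $\epsilon_1=n-1$ that produces the term $\mu_1$.

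Substituting this improved bound into $g=\sum_{k\ge1}(d-h_\Gamma(k))$ and doing the elementary arithmetic with $d-1=m_1n+\epsilon_1$ gives $g\le\binom{m_1}{2}n+m_1(\epsilon_1+1)+\mu_1=\pi_1(d,n)$, which is case~(1). Case~(2) is the equality analysis of the same chain: if $g=\pi_1(d,n)$ and $d\ge 2n+3$, all the Castelnuovo inequalities are equalities, which forces $\Gamma$ to lie on a curve of degree $\le n$ in $H$; the lifting theorem, now applied with the stronger bound $d\ge 2n+3$, then puts $C$ on a surface of degree $n-1$ or $n$.

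The main obstacle is the lifting theorem. Unlike the uniform-position and Hilbert-function estimates, which are degree-by-degree combinatorics on $\Gamma$, lifting a low-degree hypersurface or subvariety from a general hyperplane section back to the whole curve is a genuinely global statement, proved by Laudal-type arguments (restriction of the Hartshorne--Rao module, or the geometry of secant loci, together with the refinements of Gruson--Peskine and Strano); it is exactly these arguments that force the thresholds $d\ge 2n+1$ and $d\ge 2n+3$. A secondary difficulty is the sharp form of Castelnuovo's lemma for points not on a rational normal curve and the careful propagation through the $h$-vector required to keep the loss to one unit per step, which is what pins down the constant and the correction term $\mu_1$ in $\pi_1(d,n)$.
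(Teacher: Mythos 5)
The paper offers no proof of this statement: it is quoted verbatim from the cited reference \cite[Theorem 3.15]{EH}, so there is no internal argument to compare against. Your outline is, in substance, the standard Castelnuovo--Eisenbud--Harris proof as it appears in that reference (and in the exercises of \cite[Ch.~III]{ACGH}): uniform position for the general hyperplane section $\Gamma$, the genus estimate via $\sum_{k\ge 1}(d-h_\Gamma(k))$, Castelnuovo's lemma forcing $\Gamma$ onto a rational normal curve when $h_\Gamma(2)$ is minimal, and a lifting theorem to pass from a low-degree curve through $\Gamma$ in $H$ to a surface through $C$. You also correctly locate the two places where the real work is, namely the lifting theorem (which is what produces the thresholds $d\ge 2n+1$ and $d\ge 2n+3$) and the propagation of the improved bound through the $h$-vector.

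Two cautions if you were to write this out. First, $g=\sum_{k\ge1}(d-h_\Gamma(k))$ is only an inequality $g\le\sum_{k\ge1}(d-h_\Gamma(k))$ in general (coming from $h^1(\cO_C(k-1))-h^1(\cO_C(k))\le d-h_\Gamma(k)$); equality is exactly what you exploit in the analysis of case (2), so the distinction matters there. Second, the propagation step is not the naive one: knowing $h_\Gamma(2)\ge 2n$ and adding $n-1$ per degree only gives $h_\Gamma(k)\ge 2n+(k-2)(n-1)$, which is weaker than the needed $\min(d,kn)$ for large $k$. To get the full bound one must use the stronger structure of $h$-vectors of points in uniform position (the first differences are non-increasing once they drop below the generic value, and a small difference forces $\Gamma$ onto a low-degree curve), which is where the hypothesis that $C$ lies on no surface of degree $n-1$ re-enters at every degree, not just at $k=2$. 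With those two points made precise, your plan is the proof in \cite{EH}.
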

We now look at surfaces of small degrees. It is well known that for an irreducible, non-degenerate variety $M\subset \pP^n$ of dimension $m$, the minimal degree is $n-m+1$. (See \cite{GH} Section 1.3). Minimal surfaces are well understood.
\begin{thm}\cite[Section 4.3]{GH}
    Every non-degenerate irreducible surface of degree $n-1$ in $\pP^n$ is either a rational normal scroll or the Veronese surface in $\pP^5$.
\end{thm}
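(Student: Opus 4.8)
The plan is to reproduce the classical del~Pezzo--Bertini classification of surfaces of minimal degree. Let $S\subset\pP^n$ be non-degenerate, irreducible, with $\deg S=n-1$, one more than its codimension. The building block is the curve case, which I would settle first: a non-degenerate irreducible reduced curve $\Gamma\subset\pP^r$ has $\deg\Gamma\ge r$, with equality forcing $\Gamma$ to be a smooth rational normal curve. To see this, normalize $\Gamma$, pull $\cO(1)$ back to a degree-$r$ line bundle $L$ with $h^0(L)\ge r+1$ (non-degeneracy yields the injection $H^0(\pP^r,\cO(1))\hookrightarrow H^0(\Gamma,\cO_\Gamma(1))$), and invoke Clifford's inequality to exclude positive genus; then $|L|$ is the standard embedding of $\pP^1$, so $\Gamma$ was already a smooth rational normal curve.

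Next I would dispose of cones. The vertex of a cone surface must be a point $v$, and projection from $v$ presents $S$ as the cone over a non-degenerate curve of degree $n-1$ in $\pP^{n-1}$, hence over a rational normal curve --- a (degenerate) rational normal scroll. Moreover a \emph{singular} minimal-degree surface is automatically a cone: a point $x$ of multiplicity $m\ge2$ would make the projection of $S$ from $x$ have image of degree at most $(n-1)-m\le n-3$, below the minimal degree $n-2$ of a non-degenerate surface in $\pP^{n-1}$, so this image is a curve and $S$ a cone with vertex $x$. From now on $S$ is smooth and not a cone.

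A general hyperplane section $C=S\cap H$ is then irreducible (Bertini), non-degenerate (standard for general sections of a non-degenerate variety of dimension $\ge2$), of degree $n-1$ in $H\cong\pP^{n-1}$, hence a smooth rational normal curve; so $g(C)=0$ and $C^2=\cO_S(1)^2=n-1$. Adjunction on $C\subset S$ gives $-2=(K_S+C)\cdot C=K_S\cdot\cO_S(1)+(n-1)$, whence $K_S\cdot\cO_S(1)=-(n+1)<0$. Since $\cO_S(1)$ is ample, $2K_S$ is not effective, so $h^0(2K_S)=0$; and $h^1(\cO_S)=0$ follows from $0\to\cO_S(-1)\to\cO_S\to\cO_C\to0$ together with Kodaira vanishing, which gives $h^1(\cO_S(k))=0$ for $k\le-1$. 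By Castelnuovo's rationality criterion $S$ is a smooth rational surface, so its minimal model is $\pP^2$ or a Hirzebruch surface $\mathbb{F}_e$; and $S$ itself is one of these, proper blow-ups being excluded by a short numerical check (a contracted $(-1)$-curve would contradict ampleness of $\cO_S(1)$, or else the linear system fails to be very ample).

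It remains to identify the polarization. On $\pP^2$ a complete very ample $|D|$ with $D^2=n-1$ must be $|\cO_{\pP^2}(2)|$ (comparing $h^0$ excludes $\cO(k)$ with $k\ge3$), forcing $n=5$ and identifying $S$ with the Veronese surface in $\pP^5$. On $\mathbb{F}_e$, writing $\cO_S(1)$ in the basis of a fibre and a minimal section and imposing $\cO_S(1)^2=n-1$, $K_S\cdot\cO_S(1)=-(n+1)$ and very ampleness leaves exactly the class whose image is the rational normal scroll $S(a,b)$ with $a+b=n-1$ and $|a-b|=e$. I expect this last bookkeeping to be the main obstacle --- it is the only real computation, and it is where one checks that the Veronese is the \emph{unique} exception to the scroll description. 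An alternative that absorbs much of the case analysis into an induction is the inner-projection argument: projecting $S$ from a general point gives a non-degenerate surface of degree $n-2$ in $\pP^{n-1}$, again of minimal degree, to which one applies the inductive hypothesis and then lifts the conclusion back to $S$, with the Veronese arising precisely as the surface that is not such an inner projection.
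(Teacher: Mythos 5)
First, a remark on the comparison itself: the paper does not prove this statement. It is the classical del Pezzo--Bertini classification of surfaces of minimal degree, quoted as known background (the preceding sentence points to \cite{GH}, Section 1.3), so there is no in-paper argument to measure your sketch against; what follows assesses it on its own terms.

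Your skeleton is the standard one and the reductions are sound: the curve case via Clifford, the reduction to the smooth non-cone case by projecting from a point of multiplicity $m\geq 2$ (the image would be a non-degenerate surface of degree at most $n-3$ in $\pP^{n-1}$, below the minimal degree $n-2$, so it must be a curve and $S$ a cone), the computation $g(C)=0$ and $K_S\cdot\cO_S(1)=-(n+1)$, and rationality via Castelnuovo are all correct. The genuine gap is the step you yourself flag as ``the main obstacle'': passing from ``$S$ is a smooth rational surface with a very ample, linearly normal $\cO_S(1)$ of sectional genus $0$'' to ``$S$ is $\pP^2$ or a Hirzebruch surface with one of the listed polarizations.'' Your parenthetical justification --- that a contracted $(-1)$-curve would contradict ampleness of $\cO_S(1)$ --- is not an argument: a $(-1)$-curve has positive degree against a very ample divisor, so nothing is contracted and no contradiction arises; indeed $\mathbb{F}_1$, which is a blow-up of $\pP^2$, genuinely occurs in the answer as the cubic scroll in $\pP^4$, so non-minimal surfaces cannot be excluded wholesale. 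What actually rules out further blow-ups is a concrete computation with $H=aL-\sum m_iE_i$, using $(K_S+H)\cdot H=-2$, $h^0(H)=n+1$ and very ampleness to show that any additional exceptional configuration produces a curve of degree zero against $H$ (for instance the line through two blown-up points); alternatively one runs the inner-projection induction you mention, whose base cases and whose lifting step (where the Veronese appears as the one surface that is not an inner projection of a scroll) are themselves the substance of the theorem. As written, your proof stops exactly where the theorem begins; everything before that point is correct but is the routine part.
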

The case of degree $n$ surface in $\pP^n$ is a little more involved. 
\begin{thm}\cite[Theorem 8]{N}
    Every non-degenerate irreducible surface $S$ of degree $m\neq 8$ in $\pP^m$ is one of the following:
    \begin{enumerate}
        \item Projection of a minimal surface in $\pP^{m+1}$.
        \item A del-Pezzo surface with isolated double points.
        \item A cone with a smooth elliptic base curve.
    \end{enumerate}
\end{thm}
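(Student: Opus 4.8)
The plan is to control $S$ through a general hyperplane section and then to split according to whether the homogeneous coordinate ring of $S$ is Cohen--Macaulay — equivalently, whether $S$ is linearly normal — since the surfaces that are not arithmetically Cohen--Macaulay turn out to be exactly the projections of surfaces of minimal degree. Let $H\subset\pP^m$ be a general hyperplane and put $C=S\cap H\subset H\cong\pP^{m-1}$. By Bertini, $C$ is irreducible and non-degenerate of degree $m$, and it is smooth once one has reduced to the case that $S$ has at worst isolated singularities. Feeding $d=m$ and $n=m-1$ into the Castelnuovo Bound — here $d-1$ divided by $n-1$ leaves quotient and remainder $1$ — gives $g(C)\le\pi(m,m-1)=1$. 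Thus the sectional genus of $S$ is $0$ or $1$, and this value essentially dictates the trichotomy.

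Next I would dispose of the non-arithmetically-Cohen--Macaulay case, which should land in conclusion (1). If the coordinate ring of $S$ fails to be Cohen--Macaulay, then $S$ is not linearly normal; analysing $h^1(\pP^m,\mathcal{I}_S(k))$ via the cohomology of $\cO_C(k)$ together with the genus bound forces the deficiency to be minimal, whence $S$ is the isomorphic linear projection, from a point off the surface, of a non-degenerate surface $S'\subset\pP^{m+1}$. Such a projection preserves the degree, so $\deg S'=m=(m+1)-2+1$ is the minimal possible in $\pP^{m+1}$; by the structure theorem for minimal surfaces, $S'$ is a rational normal scroll or, when $m+1=5$, the Veronese surface. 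Since minimal surfaces have rational hyperplane sections, this is precisely the sectional-genus-$0$ situation, and it yields conclusion (1).

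The core of the argument is the arithmetically Cohen--Macaulay case, which should produce conclusions (2) and (3). Reading the dualizing sheaf off the minimal free resolution of the coordinate ring and using the degree together with $g(C)\le 1$, one is forced into $\omega_S\cong\cO_S(-1)$; that is, $\cO_S(1)$ is the anticanonical polarization and the sectional genus is $1$. Two subcases: if $S$ is a cone, projecting from the vertex locus decreases the degree in a controlled way, and, combined with the genus-one constraint, this identifies $S$ as the cone over the degree-$m$ elliptic normal curve $C\subset\pP^{m-1}$ — conclusion (3). If $S$ is not a cone, then $-K_S=\cO_S(1)$ is very ample with $(-K_S)^2=m$ and sectional genus $1$, so $S$ is a del Pezzo surface; such an anticanonical embedding exists only in bounded degree, the only admissible singularities are rational double points — in particular isolated — and their minimal resolution is a smooth weak del Pezzo surface. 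This is conclusion (2). The value $m=8$ has to be excluded because a sporadic additional surface of degree $8$ in $\pP^8$ escapes this trichotomy, and I would not pursue that boundary case.

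I expect the arithmetically Cohen--Macaulay case to be the main obstacle: extracting $\omega_S\cong\cO_S(-1)$ cleanly from the resolution, then ruling out, in the genus-one branch, every non-cone surface other than the (possibly singular) del Pezzo surfaces, and controlling the singularities as rational double points. By contrast, Steps 1 and 2 are essentially bookkeeping: once the sectional genus bound is in hand, the cohomological argument exhibiting the non-Cohen--Macaulay case as a projection and the degree count identifying $S'$ as a minimal surface are routine.
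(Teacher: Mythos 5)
The paper offers no proof of this statement: it is quoted directly from Nagata [N, Theorem 8], whose own argument runs through the theory of linear systems on rational surfaces rather than through hyperplane sections, so your sketch can only be judged on its own terms. Its skeleton --- Castelnuovo forces sectional genus at most $1$; a non-linearly-normal $S$ re-embeds by the complete system $|\cO_S(1)|$ into $\pP^{h^0-1}$, where the minimal-degree bound forces $h^0= m+2$, hence $S$ is an isomorphic projection of a surface of minimal degree in $\pP^{m+1}$; a linearly normal $S$ should be anticanonically embedded --- is the standard modern route to varieties of almost minimal degree (Fujita, Brodmann--Schenzel), and the projection step is indeed routine.

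However, several of the steps you wave through are where the content lives, and one is circular. (i) The general hyperplane section of $S$ need not be smooth: among the surfaces being classified are projections of scrolls acquiring a double line (the paper itself exploits exactly these in the long projection lemma of Section 3), and for such $S$ \emph{every} hyperplane section is singular; the ``reduction to isolated singularities'' you invoke cannot be performed, and you must instead use the arithmetic-genus form of Castelnuovo's bound for integral non-degenerate curves. (ii) ``Linearly normal $\Leftrightarrow$ arithmetically Cohen--Macaulay'' is not an equivalence you may assume, and $\omega_S\cong\cO_S(-1)$ cannot be read off a minimal free resolution before ACM-ness (in fact arithmetic Gorenstein-ness) of the linearly normal surfaces of degree $\mathrm{codim}+2$ is established --- that implication is a theorem and is the real heart of the classification; you also never exclude the a priori case ``linearly normal with sectional genus $0$''. (iii) Your argument gives no reason for $m=8$ to be exceptional: the two extra degree-$8$ surfaces are the $2$-uple embeddings of the quadric surfaces of $\pP^3$, which satisfy $\omega\cong\cO(-1)$ and so land squarely in your case (2) under the modern definition of a del Pezzo surface; they are exceptions only because Nagata's ``del Pezzo surface'' means an image of $\pP^2$ under a system of cubics. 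As written, your plan would prove a (true) variant of the statement rather than the one quoted, and the genuinely hard normality and singularity analysis in the linearly normal branch is deferred rather than addressed.
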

\begin{rem}
    When $m=8$ we have two more kinds of surfaces, we will not need this case. The reader can find more details in \cite[Theorem 8]{N}.
\end{rem}

\section{Classification of semistable rank $2$ sheaves with Chern classes $c_1=0$, $c_2=2$  and $c_3=0$ on $V_4$}
We first show that for stable rank $2$ vector bundles with Chern classes $c_1=0$, $c_2=2$ on $V_4$, the instantonic vanishing condition is satisfied..
\begin{lem}
    Let $S\subset \pP^4$ be a del Pezzo surface of degree $4$ and $E$ a $\mu$-semistable vector bundle of rank $2$ with Chern classes $c_1(E)=0$ and $c_2(E)=2$. If $h^0(E)=0$, then $h^1(E(n))=0$  for $n\in \Z$ and $h^2(E(n))=0$ for $n\geq -1$. If $h^0(E)\neq 0$, then $h^0(E)=1$, $h^1(E(n))=0$ for $n\leq -2$ and $n\geq 1$, $h^1(E(-1))=h^1(E)=1$ and $h^2(E(n))=0$ for $n\geq 0$.
\end{lem}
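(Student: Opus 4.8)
The plan is to split into the cases $h^0(E)=0$ and $h^0(E)\neq 0$, with Riemann--Roch, Serre duality and $\mu$-semistability (which follows from semistability) serving as a common backbone. Since $S$ is a smooth del Pezzo surface, $\mathrm{Pic}(S)$ is torsion-free and $S\subset\pP^4$ is anticanonically embedded, so $c_1(E)=0$ forces $\det E\cong\cO_S$, hence $E^\vee\cong E$, and $\omega_S=\cO_S(-1)$. Serre duality then reads $h^2(E(n))=h^0(E(-n-1))$ and $h^1(E(n))=h^1(E(-n-1))$, and Riemann--Roch on $S$ gives $\chi(E(n))=4n(n+1)$, in particular $\chi(E)=\chi(E(-1))=0$. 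Moreover $\mu$-semistability with $\mu(E)=0$ forces $h^0(E(m))=0$ for all $m\le -1$, since a nonzero section of $E(m)$ would embed a line bundle of strictly positive slope into $E$. Already this yields $h^2(E(n))=h^0(E(-n-1))=0$ for $n\ge 0$ in both cases, while $h^2(E(-1))=h^0(E)$; and when $h^0(E)=0$ it also gives $h^1(E)=h^1(E(-1))=0$ from $\chi=0$ (the $h^0$ and $h^2$ terms vanish), so that $h^2(E(n))=0$ for $n\ge -1$.

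In the case $h^0(E)\neq 0$, I would first show that a nonzero section of $E$ vanishes in codimension exactly two: it cannot vanish on a nonzero effective divisor $D$ (that would give $\cO_S(D)\hookrightarrow E$ with $D\cdot H>0=\mu(E)$), nor vanish nowhere (that would force $c_2(E)=0$). Hence there is an exact sequence $0\to\cO_S\to E\to I_Z\to 0$ with $Z$ zero-dimensional of length $c_2(E)=2$. Twisting by $\cO_S(n)$ and taking cohomology, everything reduces to the cohomology of $\cO_S(n)$ and $I_Z(n)$: on a del Pezzo surface $h^1(\cO_S(n))=0$ for all $n$ (Kodaira vanishing for $n\ge 0$, Serre duality for $n\le -1$) and $h^2(\cO_S)=0$; $h^0(I_Z)=0$ and $h^0(\cO_Z)=2$; and $\cO_S(n)$ is very ample for $n\ge 1$, so $Z$ imposes independent conditions and $h^1(I_Z(n))=0$. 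The long exact sequences then give $h^0(E)=h^0(\cO_S)=1$; $h^1(E(n))=0$ for $n\ge 1$, hence by Serre duality for $n\le -2$; $h^1(E)\cong h^1(I_Z)=h^0(\cO_Z)-h^0(\cO_S)=1$, hence $h^1(E(-1))=1$; and $h^2(E(n))=0$ for $n\ge 0$.

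In the case $h^0(E)=0$, by the reductions above it remains only to prove $h^1(E(n))=0$ for $n\ge 1$ (the cases $n\le -2$ then follow by Serre duality, and $n=0,-1$ are done). I would fix a smooth curve $C_1\in|\cO_S(1)|=|-K_S|$ (it exists by Bertini); it is an elliptic curve with $\omega_{C_1}=\cO_{C_1}$ and $\deg\cO_S(1)|_{C_1}=4$. From $0\to E(-1)\to E\to E|_{C_1}\to 0$ together with $h^0(E)=h^1(E(-1))=0$ one gets $h^0(C_1,E|_{C_1})=0$; since $E|_{C_1}$ has rank $2$ and degree $0$, this forces $E|_{C_1}$ to be semistable, because any destabilizing sub-line-bundle would have degree $\ge 1$ and hence a nonzero section. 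Consequently, for $n\ge 1$ the bundle $E|_{C_1}(n)$ is semistable of positive degree, so $h^1(C_1,E|_{C_1}(n))=h^0(C_1,(E|_{C_1})^\vee(-n))=0$ (the latter being a semistable bundle of negative degree). Feeding this into the cohomology sequence of $0\to E(n-1)\to E(n)\to E|_{C_1}(n)\to 0$ and inducting from $h^1(E)=0$ gives $h^1(E(n))=0$ for all $n\ge 0$, and Serre duality extends this to all $n\in\Z$.

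The only step that is not pure bookkeeping is the automatic semistability of the restriction $E|_{C_1}$ in the case $h^0(E)=0$: one must check that the vanishing $h^0(C_1,E|_{C_1})=0$ genuinely excludes every destabilizing line subbundle (using that a line bundle of degree $\ge 1$ on a genus-$1$ curve has a nonzero section), and that a smooth member $C_1$ of $|-K_S|$ is available. Once this is in place the hyperplane-section induction produces all the positive-twist vanishings on $S$, and everything else follows from Riemann--Roch, Kodaira vanishing, Serre duality and the long exact sequences of $0\to\cO_S\to E\to I_Z\to 0$ and of restriction to $C_1$.
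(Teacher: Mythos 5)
Your proof is correct. Note that the paper itself gives no argument for this lemma: its ``proof'' is the single line ``See [D, Lemma 2.2]'', deferring to Druel's treatment of the analogous statement for cubic surfaces. Your write-up supplies a complete, self-contained argument, and it runs along the standard lines one would expect (and that Druel's proof follows): Riemann--Roch giving $\chi(E(n))=4n(n+1)$, Serre duality with $\omega_S=\cO_S(-1)$ and $E^\vee\cong E$, vanishing of $h^0(E(m))$ for $m\le -1$ from $\mu$-semistability, the extension $0\to\cO_S\to E\to I_Z\to 0$ with $\ell(Z)=2$ when $h^0(E)\neq 0$, and restriction to a smooth anticanonical (elliptic) curve plus the induced semistability of $E|_{C_1}$ to run the induction when $h^0(E)=0$. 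All the individual steps check out, including the two points you flag as non-bookkeeping: a smooth member of $|-K_S|$ exists since $-K_S$ is very ample on a degree-$4$ del Pezzo surface, and a destabilizing line subbundle of $E|_{C_1}$ would have degree at least $1$ on a genus-$1$ curve and hence a section, contradicting $h^0(E|_{C_1})=0$.
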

\begin{proof}
    See \cite[Lemma 2.2]{D}
\end{proof}
    \begin{thm}\label{auto}
        Let $E$ be a stable rank $2$ vector bundle on $V_4$, with $c_1(E)=0$ and $c_2(E)=2$. Then $H^1(V_4,E(n))=0$ for all $n\in \Z$. In particular, $E$ is a minimal instanton bundle.
    \end{thm}
\begin{proof}
    Let $S\in |\cO_{V_4}(1)|$ be a general hyperplane section of $V_4$. Then $E_S$ is $\mu$-semistable with respect to the polarization $\cO_S(1)$ \cite[Theorem 3.1]{M}.\\
    Suppose $h^0(E_S)=0$. Consider the short exact sequence:
    \begin{align*}
        0\to E(n-1)\to E(n)\to E_S(n)\to 0.
    \end{align*}
    Since $h^1(E_S(n))=0$ for $n\in \Z$, we have $h^1(E(n))\leq h^1(E(n-1))$. Thus $h^1(E(n))=0$ for all $n\in\Z$ since $h^1(E(n))=0$ for $n\ll 0$.\\
    Suppose $h^0(E_S)\neq 0$, we will try to get a contradiction. We claim $E(2)$ is generated by global sections. Using the above exact sequence, we obtain $h^1(E(-n))=0$ for $n\geq 2$. Note $h^2(E)=h^1(E(-2))=0$ and $h^3(E)=h^0(E(-2))=0$. Since $\chi(E)=0$, we have $h^1(E)=0$ and the exact sequence:
    \begin{align}\label{C1}
        0\to E\to E(1)\to E_S(1)\to 0.
    \end{align}
    gives $h^1(E(1))=h^1(E_S(1))=0$. We have then $h^3(E(-1))=h^0(E(-1))=0$. Thus $E(2)$ is generated by global sections by Mumford-Castelnuovo criterion (Proposition \ref{mumford}).\\
    If there exists a nowhere vanishing section of $E(2)$, then $E$ is isomorphic to $\cO_{V_4}(2)\oplus\cO_{V_4}(-2)$, 
    which is absurd. We have then an exact sequence:
    \begin{align}\label{C2}
        0\to \cO_{V_4}(-4)\to E(-2)\to I_Y\to 0
    \end{align}
    where $Y\in V_4$ is a smooth curve of degree $c_2(E(2))=18$. We have $h^1(I_Y)=0$, so the curve $Y$ is connected. We have $\omega_Y=\cO_Y(2)$ and $g(Y)=19$. Finally, the curve $Y$ is non-degenerate since $E$ is stable. Using (\ref{C1}), (\ref{C2}), it is not hard to find $h^0(\cO_Y(1))=7$. Thus the curve $Y$ is the projection to $\pP^5$ of a non-degenerate curve in $\pP^6$ with degree $18$ and genus $19$. The next lemma shows this leads to a contradiction.
\end{proof}
\begin{lem}
    Let $\tilde{Y}\subset \pP^6$ be a non-degenerate curve of degree $18$ and genus $19$. Let $O\notin \tilde{Y}$ be a point such that the projection from $O$ induces an embedding of $\tilde{Y}$ into $\pP^5$. Then the image $Y$ of $\tilde{Y}$ cannot lie in the intersection $V_4$ of two quadrics in $\pP^5$.
\end{lem}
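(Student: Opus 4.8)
The plan is to use the refined Castelnuovo bound of Eisenbud--Harris (Theorem \ref{pi1}) to pin $\tilde{Y}$ to a surface of small degree, and then to exploit the fact that $V_4$ is cut out by quadrics to bound the degree of any curve on that surface lying on $V_4$. First I record two facts. Since the projection of a non-degenerate variety from a point is again non-degenerate, the image $Y\subset\pP^5$ is non-degenerate; in particular $Y$ lies in no $\pP^4\subset\pP^5$. Next I apply Theorem \ref{pi1} to $\tilde{Y}\subset\pP^6$ with $n=6$, $d=18$: dividing $d-1=17$ by $n=6$ gives $m_1=2$ and $\epsilon_1=5=n-1$, so $\mu_1=1$ and
\[
\pi_1(18,6)=\binom{2}{2}\cdot 6+2\cdot(5+1)+1=19=g(\tilde{Y}).
\]
Since also $d=18\geq 2n+3=15$, part (2) of Theorem \ref{pi1} applies and $\tilde{Y}$ lies on a surface $\tilde{S}\subset\pP^6$ of degree $5$ or $6$. (The ordinary Castelnuovo bound gives only $g\leq\pi(18,6)=21$, so it is the sharper bound $\pi_1$ that does the work here.) Replacing $\tilde{S}$ by an irreducible component through $\tilde{Y}$, I may assume $\tilde{S}$ is an irreducible surface of degree $\leq 6$; projecting from $O$ then yields an irreducible surface $S\subset\pP^5$ containing $Y$ with $\deg S\leq\deg\tilde{S}\leq 6$, as projection cannot raise the degree.

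Now suppose for contradiction that $Y\subset V_4=Q_1\cap Q_2$, where $Q_1,Q_2$ are the two quadrics. I split into cases according to which quadrics contain $S$. If $S\subset Q_1\cap Q_2=V_4$, then $S$ is an effective divisor on $V_4$; since $\mathrm{Pic}(V_4)=\Z[h]$ we have $S\in|kh|$ for some $k\geq 1$, hence $\deg S=4k$ (recall $h^3=4$). Thus $\deg S\leq 6$ forces $k=1$, so $S$ is a hyperplane section contained in some $\pP^4\subset\pP^5$, whence $Y$ is degenerate --- contradicting the first paragraph. Otherwise $S\not\subset Q_i$ for at least one index $i$; since $Y\subset V_4\subset Q_i$, the curve $Y$ is contained in the divisor $Q_i\cap S\in|\cO_S(2)|$, whose degree is $2\,H_S^2=2\deg S\leq 12$. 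But then $18=\deg Y\leq\deg(Q_i\cap S)\leq 12$, absurd. Every case is excluded, so $Y\not\subset V_4$.

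The numerical coincidence $\pi_1(18,6)=19$ is the crux: it is exactly what makes Eisenbud--Harris applicable and confines $\tilde{Y}$ to a surface of degree at most $6$. I expect the only points requiring care to be the bookkeeping around the projection --- that $\deg S\leq\deg\tilde{S}$ and that the component of $\tilde{S}$ meeting $\tilde{Y}$ is genuinely a surface --- together with the degeneracy step, which uses $\mathrm{Pic}(V_4)=\Z[h]$ to see that the only surface of degree $\leq 6$ sitting inside $V_4$ is a hyperplane section. It is worth noting that the full classification of surfaces of minimal and almost-minimal degree is \emph{not} needed: once a surface $S$ of degree $\leq 6$ through $Y$ is produced, the quadric-degree bound $2\deg S\leq 12<18$ closes the argument immediately.
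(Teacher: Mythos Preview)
Your argument is correct and considerably shorter than the paper's. Both proofs open with Eisenbud--Harris (Theorem~\ref{pi1}) to place $\tilde{Y}$ on an irreducible surface $\tilde{S}\subset\pP^6$ of degree $\leq 6$, but from there the paper lifts $V_4$ to its cone $\overline{V_4}\subset\pP^6$, shows $\tilde{S}\subset\overline{V_4}$ via the same quadric-cut bound you use, and then embarks on a lengthy case analysis through the explicit classification of degree-$5$ and degree-$6$ surfaces in $\pP^6$ (scrolls, del~Pezzo surfaces, elliptic cones, projections of scrolls from $\pP^7$), in each sub-case either showing that the projection fails to embed $\tilde{Y}$ or identifying $\pi(\tilde{S})$ and checking it is not a del~Pezzo hyperplane section of $V_4$. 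You bypass all of this by projecting first and arguing directly in $\pP^5$: once $\deg S\leq 6$, either $S\subset V_4$ forces $S\in|h|$ (so $Y$ lies in a hyperplane, contradicting non-degeneracy), or some $Q_i$ cuts $S$ in a curve of degree $\leq 12<18$. The insight the paper's longer route obscures is that in the first case one need not identify $S$ at all --- the mere fact that $S$ is a hyperplane section already makes $Y$ degenerate.

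One small point you should make explicit is that $S=\pi(\tilde{S})$ is genuinely $2$-dimensional. If $\dim S=1$, then the fibres of $\tilde{S}\dashrightarrow S$ are lines through $O$, so $\tilde{S}$ is a cone with vertex $O$; since $\pi|_{\tilde{Y}}$ is injective, $\tilde{Y}$ meets each such line in at most one point, forcing $\pi(\tilde{Y})=S$ and hence $\deg\tilde{S}=\deg Y=18>6$, a contradiction. The paper handles this case via the classification (the only cones of degree $\leq 6$ have rational or elliptic base), but your degree bound dispatches it just as well.
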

\begin{proof}
    We first apply  Theorem \ref{pi1} to $\tilde{Y}$. In this case, we have $d=18$ and $n=6$, then $m_1=2$ and $\epsilon_1=5=6-1$. So $\mu_1=1$. We compute $\pi_1(18,6)=6+2\times 6+1=19=g$. Thus by the second part of Theorem \ref{pi1}, $\tilde{Y}$ lies on a surface $S$ of $5$ or $6$ in $\pP^6$. \
    
    Degree $5$ surfaces in $\pP^6$ are the images of $\mathbb{F}_{1+2k}$, $k=0,1,2$ of the morphisms $\tau_k$ induced by complete linear systems $|C_0+(3+k)f|$, where $C_0$ is the unique section with $C_0^2=-1-2k$ and $f$ is a general fibre. We have $\tilde{Y}\in |3C_0+(12+3k)f|$. Note when $k=0,1$, $\tau_k$ is an closed embedding while $\tau_2$ contracts the section $C_0$ and the image is a cone. \
    
    Degree $6$ surfaces in $\pP^6$ are:
    \begin{enumerate}
        \item A cone over smooth elliptic curve of degree $6$ in $\pP^5$.
        \item A del Pezzo surface of degree $6$ with possibly isolated double points.
        \item A projection of $\mathbb{F}_{2k}\subset \pP^7$ (k=0,1,2,3), embedded via the complete linear system $|C_0+(3+k)f|$, from a point outside of the surface.
    \end{enumerate}
    Let $\pi$ be the projection from $O\in \pP^6$. Let $\pi(S)$ be the image of $S$ under the rational map $\pi$. If $\pi(S)$ is one dimensional, then $S$ is a cone with base $\tilde{Y}$, which is absurd, since $S$ can only be a cone over a rational or elliptic curve by the classification above. Thus $\pi(S)$ is two dimensional. If $S$ is a cone, then its vertex is different from the point $O$.\
    
    Now suppose $Y\subset \pP^5$ is contained in $V_4$ and use $\overline{V_4}\subset \pP^6$ to denote the cone with vertex $O$ and base $V_4$.\
    
    Suppose $\overline{V_4}$ does not contain the surface $S$. Recall $V_4$ is the complete intersection of two smooth quadrics $Q_0$,$Q_1$. Use $\overline{Q_i}$ to denote the cone with vertex $O$ and base $Q_i$. Then $\overline{V_4}$ is the intersection of $\overline{Q_i}$'s. Then $S$ is not contained in at least one of the $\overline{Q_i}'s$, say $\overline{Q_0}$. Then $\overline{Q_0}$ cut the surface $S$ at a curve of degree $10$ or $12$, which cannot contain $\tilde{Y}$, contradiction. So $S\subset \overline{V_4}$. We thus also have $\pi(S)\subset V_4$.\
    
    Suppose $O\notin S$. Denote the degree of $\pi$ by $d$, then $\pi(S)$ is a surface of degree $5/d$ or $6/d$. On the other hand, $\pi(S)$ corresponds to a Cartier divisor $\cO_{V_4}(l)$ where $l$ is an integer, thus has degree $4l$. We have now a contradiction since $4dl=5$ or $4dl=6$ have no integral solutions.\
    
    Suppose $O\in S$. If $S$ has degree $5$, then $S$ is one of the surfaces $\tau_k(\mathbb{F}_{1+2k})$ for $k=0,1,2$. The fibre $f$ passing through $O$ is contracted by $\pi$. But we have $\tilde{Y}.f=3$ and $\pi$ cannot induce a closed embedding on $\tilde{Y}$. If $S$ is a cone over a smooth elliptic curve $E$ of degree $6$ in $\mathbb{P}^5$. Then $S$ is the image of a ruled surface over $E$ by the morphism associated to a linear system numerically equivalent to $|C_0+6f|$, where $C_0$ is the section we obtained by blowing up the vertex ($C_0^2=-6$). Then $\tilde{Y}\equiv mC_0+18f$, where $m=3$ or $4$. Then again if $f$ is the general fibre passing through $O$, $\tilde{Y}.f=m>1$ and this contradicts the fact that $\pi$ induced a closed embedding on $\tilde{Y}$. If $S$ is a del Pezzo surface of degree $6$ with possible isolated double points, then $S$ is the image of $\pP^2$ blowing up three points, i.e. $S$ is the image of the linear system $|3H-E_1-E_2-E_3|$ where $H$ is the pull-back of a hyperplane section in $\pP^2$ and $E_i$ are the exceptional divisors. Note that the singularities occur when the points are not in general position. Then (compactification) of $\pi(S)\subset \mathbb{P}^5$ is the image of the blow-up of $S$ at $O$ by the morphism associated to $|3H-E_1-E_2-E_3-G|$, where $G$ is the exceptional divisor of the blow-up at $O$. Then $\pi(S)\subset \pP^5$ is one of the following:
    \begin{enumerate}[label=(\roman*)]
        \item The Veronese surface $\pP^2\subset \pP^5$.
        \item $\mathbb{F}_0 $ or $\mathbb{F}_2$ embedded into $\pP^5$ as a minimal surface.
        \item A del Pezzo surface of degree $5$ with possibly isolated double points.
    \end{enumerate}
    On the other hand, $\pi(S)\subset V_4$, and hence corresponds to a Cartier divisor of the form $\cO_{V_4}(l)$. Thus $\pi(S)$ has degree $4l$. This will immediately lead to a contradiction in the third case. In the first and second case, we will have $l=1$. But remember that a smooth hyperplane section of $V_4$ can only be a del Pezzo surfaces of degree $4$, which is impossible in the first two cases.\ 
    
    The case when $S$ is of type (3) is more complicated. Note when $k\neq 3$, the secant variety of $\mathbb{F}_{2k}\subset \pP^7$ is a proper subvariety of $\pP^7$ and projection from a point off the secant variety induces an isomorphism. Since a rational normal scroll does not have a tri-secant line (not contained in the surface), any secant line can only meet the scroll at two points transversely. Similarly, a tangent line (not contained in the surface) can be tangent at one point and does not meet the scroll again. Moreover, two distinct secant lines, a tangent line and a secant line or two tangent lines cannot meet at any points other than a point of the scroll unless the scroll intersects the plane spanned by these two lines in a conic. In conclusion, $S$ can either be smooth, or have only one double point or only one double line (which comes from a conic on $\mathbb{F}_{2k}$). Now suppose $O\in S$ is a smooth point. Then there exists a unique corresponding point, which we also call $O$ on $\mathbb{F}_{2k}$. Then $\pi(S)$ is the image of a codimension one base-point-free linear system in $|C_0+(3+k)f-E|$, where $E$ is the exceptional divisor obtained by blowing up $O$. Thus $\pi(S)\subset V_4$ is a Cartier divisor of degree $5$, which leads to a contradiction as before.\ 
    
    When $O$ is the double point, then there are either two distinct points or a point and a tangent direction at it on $\mathbb{F}_{2k}$ corresponding to $O$. For simplicity of argument, we will only discuss the case of two distinct points, the other case can be handled similarly. Denote the two points by $p_1,p_2$, then $\pi(S)$ is the image of $\mathbb{F}_{2k}$ via the complete linear system $|C_0+(3+k)f-E_1-E_2|$ where $E_1,E_2$ are the exceptional divisors obtained by blowing up $p_1,p_2$. Note by our choice, $p_1,p_2$ cannot lie on a line contained in $S$ nor a conic in $S$. When $k=0$, there are no conics on $S$. When $k=1$, $C_0$ is the only conic in $S$. When $k=2$, $C_0+f$ will be a conic for any general fibre $f$. When $k=3$, any two points not on the same line will be on a conic (which is the union of the lines containing each point). It is not hard to check that $\pi(S)$ is one of the following surfaces:
    \begin{enumerate}[label=(\Alph*)]
        \item Image of $\mathbb{F}_0$  associated to complete linear system $|C_0+2f|$.
        \item Image of $\mathbb{F}_2$  associated to complete linear system $|C_0+3f|$.
        \item Image of $\mathbb{F}_4$  associated to complete linear system $|C_0+4f|$.
    \end{enumerate}
    We present in Table $1$ what $\pi(S)$ is depending on the position of $p_1,p_2$.
            \begin{table}[ht]\label{tb}
            \caption{}
    \begin{center}
\begin{tabular}{|l|l|l|}
\hline
k & Position of $p_1,p_2$             & $\pi(S)$            \\
\hline
0&$p_1,p_2$ not on a $C_0$&A\\
\hline
0&$p_1,p_2$ on a $C_0$&B\\

\hline
1&$p_1\in C_0$ and $p_2\notin C_0$&B\\
\hline
1&$p_1,p_2\notin C_0$&A\\
\hline
2&$p_1,p_2\notin C_0$&B\\
\hline

\end{tabular}
\end{center}
\end{table}
Note all $\pi(S)$ are smooth surfaces in $\pP^5$ of degree $4$. On the other hand, $\pi(S)\subset V_4$ corresponds to a Cartier divisor $\cO_{V_4}(l)$. By looking at the degree, we see that $l=1$. But then $\pi(S)$ has to be a del Pezzo surface of degree $4$, which leads to a contradiction.\ 

When $O$ is a point on the double line, then $k=1,2$ or $3$. In each case, the strict transform of the conic under the blow-up of $E_1,E_2$ will be contracted by the system $|C_0+(3+k)f-E_1-E_2|$, and it is not hard to check that $\pi(S)$ is of type $(C)$ by computing the degree and the self-intersection of the strict transform for each $k$. Since $\pi(S)=(C)$ has degree $4$, it corresponds to the Cartier divisor $\cO_{V_4}(1)$, which means $\pi(S)\subset H$, where $H$ is a hypersurface in $\pP^5$. This is absurd since $(C)$ is non-degenerate. This finishes the proof.
\end{proof}

\cite{D} classified semistable rank $2$ sheaves with Chern classes $c_1=0$, $c_2=2$  and $c_3=0$ on cubic threefolds. Now we classify semistable rank $2$ sheaves with Chern classes $c_1=0$, $c_2=2$  and $c_3=0$ on $V_4$, closely following the argument of \cite{D}. When the proof transfers almost verbatim, we will only point out the changes in our situation and refer the readers to \cite{D}.

\begin{prop}
  Let $E$ be a semistable rank $2$ sheaf with Chern classes $c_1=0$, $c_2=2$  and $c_3=0$ on $V_4$. Let $F$ be the double dual of $E$. Then either $E$ is locally free or $F$ is locally free with second Chern class $c_2(F)=1$ and $h^0(F)=1$ or $F=H^0(F)\otimes\cO_{V_4}$.
\end{prop}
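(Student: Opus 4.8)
The plan is to study the double dual $F = E^{\vee\vee}$ and the canonical exact sequence $0 \to E \to F \to Q \to 0$, where $Q$ is a sheaf supported in dimension $\leq 1$ (since $E$ is torsion free of rank $2$ and $F$ is reflexive, hence locally free away from a set of codimension $\geq 3$, i.e. locally free on the threefold $V_4$). The Chern class bookkeeping is the backbone: from the sequence we get $c_1(F) = c_1(E) = 0$, and $c_2(F) = c_2(E) - c_2(Q)$ together with a relation on $c_3$. Since $Q$ is supported on a curve (possibly zero-dimensional or empty), $c_2(Q)$ is effective: writing $c_2(Q) = \ell[l]$ for some integer $\ell \geq 0$ with equality to $0$ iff $\dim \Supp Q \leq 0$. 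Hence $c_2(F) = 2 - \ell \leq 2$. So first I would establish $\ell \in \{0,1,2\}$, i.e. $c_2(F) \in \{0,1,2\}$, and handle each case.

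The case $\ell = 0$, i.e. $c_2(F) = c_2(E) = 2$: here $Q$ is supported in dimension $0$. I would argue that since $F$ is a rank $2$ bundle with $c_1 = 0$, $c_2 = 2$ and $E \subset F$ has the same Chern character in degrees $\leq 2$, and then use semistability of $E$ plus $\mu$-semistability of $F$ (slope $0$) to force $Q = 0$, so $E = F$ is locally free. The point is that if $Q \neq 0$ were a nonzero length sheaf, then $c_3$ would jump: $c_3(E) = c_3(F) - 2\,\mathrm{length}(Q)$ (up to sign conventions), and combined with the constraint $c_3(E) = 0$ and $c_3(F) = 0$ for a rank $2$ bundle with $c_1 = 0$ (odd Chern classes of such bundles vanish, as $F \cong F^\vee$), one gets $\mathrm{length}(Q) = 0$. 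This is the cleanest subcase.

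For $\ell = 1$, i.e. $c_2(F) = 1$: here $F$ is a $\mu$-semistable rank $2$ bundle with $c_1 = 0$, $c_2 = 1$ on $V_4$. I would invoke the classification/Riemann–Roch to pin down its cohomology: compute $\chi(F) = \chi(F(0))$ via Riemann–Roch using $\mathrm{td}(\mathcal T_{V_4}) = 1 + h + \tfrac{7}{3}l + p$ and $\mathrm{ch}(F) = 2 - l + \cdots$, showing $\chi(F) = 1$ (or similar), and then use Serre duality $F^\vee \cong F$ plus $\mu$-semistability to kill $h^2$ and $h^3$, concluding $h^0(F) = 1$. A section of $F$ with $c_2(F) = 1$ vanishes on a line (by Proposition on rank $2$ bundles and sections), consistent with the eventual classification via ideal sheaves of lines. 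The case $\ell = 2$, $c_2(F) = 0$: a $\mu$-semistable rank $2$ bundle with $c_1 = 0 = c_2$ on a Fano threefold with $\Pic = \Z$ must be trivial, i.e. $F = H^0(F) \otimes \cO_{V_4}$; here one uses that such a bundle is semistable of slope $0$ with vanishing discriminant, hence (by Bogomolov-type or direct restriction-to-surface arguments, or simply that $F$ and $F^\vee$ both have sections) a direct sum of copies of $\cO_{V_4}$.

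The main obstacle I anticipate is the case $\ell = 1$: ruling out the possibility that $F$ is unstable or has unexpected cohomology, and in particular confirming $h^0(F) = 1$ rather than larger. This requires the restriction-to-a-del-Pezzo-surface technique (as in the proof of Theorem~\ref{auto} and \cite[Lemma 2.2]{D}), combined with the Mumford–Castelnuovo criterion (Proposition~\ref{mumford}) to control $h^1$. A secondary subtlety is making sure the reflexive sheaf $F = E^{\vee\vee}$ on the smooth threefold $V_4$ is genuinely locally free — this follows from reflexive sheaves being locally free outside codimension $\geq 3$, so on a threefold $F$ is locally free away from finitely many points, and then a rank $2$ reflexive sheaf on a smooth threefold is in fact locally free by Serre's criterion since it is reflexive of depth $\geq 2$; I would cite \cite[Section 1]{H1} for this.
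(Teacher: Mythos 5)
Your outline has the right skeleton (double-dual sequence, Chern-class bookkeeping, case analysis via semistability), but it contains a genuine error that undercuts the whole argument: you assert that $F=E^{\vee\vee}$ is automatically locally free because it is reflexive on a smooth threefold. This is false. Reflexivity gives depth $\geq 2$ at every point and hence local freeness only outside a closed set of codimension $\geq 3$ --- on a threefold that is a \emph{finite set of points}, not the empty set. Local freeness would require depth $3$ at closed points, and rank-$2$ reflexive sheaves on smooth threefolds that are not locally free are ubiquitous (this is the entire subject of Hartshorne's work on stable reflexive sheaves on $\pP^3$); their failure of local freeness is measured by $c_3(F)>0$. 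The local freeness of $F$ is part of the \emph{conclusion} of the proposition and must be derived from $c_3(E)=0$. Your attempt to do this in the $c_2(F)=2$ case is circular: you get $c_3(F)=0$ from ``odd Chern classes of such bundles vanish since $F\cong F^\vee$,'' which already assumes $F$ is a bundle. The correct relation is $c_3(F)=c_3(E)+2\,\mathrm{length}(Q)=2\,\mathrm{length}(Q)\geq 0$, which by itself does not force $Q=0$; one needs the finer analysis. Similarly, in the $c_2(F)=1$ case you never actually establish $h^0(F)=1$ or local freeness --- you describe obstacles but do not overcome them.

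For comparison, the paper does not reprove this statement: it cites Druel's Proposition 3.1 verbatim and only verifies that the inputs transfer from the cubic to $V_4$, namely that $V_4$ has index $2$, that its general hyperplane sections are del Pezzo surfaces, and that the Euler characteristic inequalities $\chi(E(n))<2\chi(\cO_{V_4}(n))$ and $\chi(E(n))<2\chi(I_p(n))$ hold. Those inequalities are the mechanism by which Gieseker semistability excludes $\cO_{V_4}$ and $I_p$ as destabilizing subsheaves, and Druel's argument is organized around $h^0(F)$ (using restriction to the del Pezzo section and the Mumford--Castelnuovo criterion) rather than around the value of $c_2(F)$. Your proposal never invokes these inequalities, which are exactly the $V_4$-specific ingredients the paper identifies as necessary. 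To repair your write-up you would need to (i) drop the claim that reflexive implies locally free, (ii) carry $c_3$ of the reflexive sheaf $F$ and of the quotient $Q$ honestly through each case, and (iii) supply the $h^0$ analysis via restriction to a hyperplane section together with the semistability inequalities above.
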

\begin{proof}
    See \cite[Proposition 3.1]{D}. The proof of  \cite[Proposition 3.1]{D} used mainly general arguments about semistable sheaves on projective varieties and can be directly applied here. We highlight a few similarities between $V_4$ and a cubic threefold $V_3$ which allow us to mimic the argument:
    \begin{enumerate}
        \item Both $V_4$ and $V_3$ are Fano threefolds of index $2$.
        \item General hyperplane sections of both $V_4$ and $V_3$ are del-Pezzo surfaces in their anticanonical embedding.
        \item The following inequalities for Hilbert polynomials on $V_3$
        \begin{align*}
            \chi(E(n))&<2\chi(\cO_{V_3}(n))\\
            \chi(E(n))&<2\chi(I_p(n))
        \end{align*}
    \end{enumerate}
    remain true on $V_4$, in fact
    \begin{align*}
        \chi(E(n))&=\frac{4}{3}n^3+4n^2+\frac{8}{3}n\\
        \chi(\cO_{V_4}(n))&=\frac{2}{3}n^3+2n^2+\frac{7}{3}n+1\\
        \chi(I_p(n))&=\frac{2}{3}n^3+2n^2+\frac{7}{3}n+1
    \end{align*}
    where $p$ is a point.
\end{proof}

\begin{lem}
    Suppose $\theta$ is  the theta-characteristic of a smooth conic $C\subset V_4$. We consider the sheaf $E$ which is the kernel of the surjection $H^0(\theta(1))\otimes\cO_{V_4}\to\theta(1)$. Then $E$ is stable with Chern classes $c_1(E)=0$,$c_2(E)=2$ and $c_3(E)=0$.
\end{lem}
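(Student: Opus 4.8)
The plan has three stages: unwind the defining sequence, extract the Chern classes, and prove stability by controlling rank-one subsheaves. First, since $C\subset V_4\subset\pP^5$ is a smooth conic it is isomorphic to $\pP^1$ and has degree $2$, so $\cO_C(1)\cong\cO_{\pP^1}(2)$ and $\theta(1)\cong\cO_{\pP^1}(1)$; in particular $\theta(1)$ is globally generated with $h^0(\theta(1))=2$, so the evaluation map $H^0(\theta(1))\otimes\cO_{V_4}\to\theta(1)$ is genuinely surjective, and its kernel $E$ is torsion-free (being a subsheaf of $\cO_{V_4}^{\oplus 2}$) of rank $2$ (since $\theta(1)$ is torsion). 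So I would work throughout with $0\to E\to\cO_{V_4}^{\oplus 2}\to\theta(1)\to 0$.

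For the Chern classes, apply Grothendieck--Riemann--Roch to $i\colon C\hookrightarrow V_4$: as $C$ has degree $2$, genus $0$ and $\deg\theta(1)=1$, this gives $\mathrm{ch}(\theta(1))=2l$ with vanishing degree-three term; hence $\mathrm{ch}(E)=2-2l$, equivalently $c(\theta(1))=1-2l$ and $c(E)=(1-2l)^{-1}=1+2l$, i.e. $c_1(E)=0$, $c_2(E)=2$ and $c_3(E)=0$.

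The substance is stability. By definition it suffices to show $\chi(F(n))<\tfrac12\chi(E(n))$ for $n\gg 0$ for every rank-one subsheaf $F\subseteq E$; recall $\chi(E(n))=\tfrac43 n^3+4n^2+\tfrac83 n$ and $\chi(\cO_{V_4}(n))=\tfrac23 n^3+2n^2+\tfrac73 n+1$. Composing $F\hookrightarrow E\hookrightarrow\cO_{V_4}^{\oplus 2}$ gives $(s_1,s_2)$, not both zero; say $s_1\neq 0$. Since $E$, hence $F$, is torsion-free, $s_1$ is injective, so $F\cong\mathrm{im}(s_1)=I_W$ for a subscheme $W$, whence $c_1(F)\leq 0$ and $E$ is automatically $\mu$-semistable. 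If $c_1(F)<0$ then $\mu(F)<\mu(E)=0$ and the required inequality of reduced Hilbert polynomials holds for free. If $c_1(F)=0$, so $W$ has pure codimension $\geq 2$, then $\Hom(I_W,\cO_{V_4})=H^0(\cO_{V_4})=\C$, so $s_2=\lambda s_1$ for some $\lambda\in\C$ and $F\hookrightarrow E$ factors through $E\cap\big((1,\lambda)\cdot\cO_{V_4}\big)$, where $(1,\lambda)\cdot\cO_{V_4}$ denotes the sub-line-bundle $f\mapsto(f,\lambda f)$ of $\cO_{V_4}^{\oplus 2}$. Under the isomorphism $\cO_{V_4}\xrightarrow{\sim}(1,\lambda)\cdot\cO_{V_4}$, the composite to $\theta(1)$ becomes multiplication by a nonzero section of $\theta(1)$, which factors as $\cO_{V_4}\twoheadrightarrow\cO_C\hookrightarrow\theta(1)$; hence $E\cap\big((1,\lambda)\cdot\cO_{V_4}\big)\cong I_C$ and $F$ embeds into $I_C$, so $F\cong I_W$ with $W\supseteq C$. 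Therefore $\chi(\cO_W(n))\geq\chi(\cO_C(n))=2n+1$ for $n\gg 0$, giving $\chi(F(n))\leq\tfrac23 n^3+2n^2+\tfrac13 n<\tfrac12\chi(E(n))$ for $n\gg 0$. Hence $E$ is stable.

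I expect the crux to be this last reduction: showing that every degree-zero rank-one subsheaf of $E$ is trapped inside the ideal sheaf $I_C$ of the conic. It rests on $\Hom(I_W,\cO_{V_4})$ being one-dimensional for $W$ of codimension $\geq 2$, together with the explicit evaluation-kernel description of $E$, which identifies the rank-one subbundles of $\cO_{V_4}^{\oplus 2}$ with $\pP\big(H^0(\theta(1))\big)$ and makes their intersections with $E$ readable. Surjectivity of the evaluation map and the Chern-class bookkeeping are routine.
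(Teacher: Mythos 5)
Your proof is correct and follows essentially the same route as the paper, which simply defers to Druel's Lemma 3.4 and flags the inequality $2\chi(I_C(n))<\chi(E(n))$ as the only new input: your reduction of a degree-zero rank-one subsheaf to one trapped in $I_C$ (via $\Hom(I_W,\cO_{V_4})=\C$ and the identification of $E\cap\bigl((1,\lambda)\cdot\cO_{V_4}\bigr)$ with $I_C$) is exactly the mechanism behind that citation, and your Chern-class computation matches. The only cosmetic point is that rank-two proper subsheaves should be dismissed explicitly (their quotient is a nonzero torsion sheaf, so the reduced Hilbert polynomial drops), but this is routine.
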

\begin{proof}
    See \cite[Lemma 3.4]{D}. Again the arguments in \cite{D} can be applied because of the similarites highlighted in the previous proof. The only new fact we need is $2\chi(I_C(n))<\chi(E(n))$. This is true since
    \begin{align*}
        \chi(I_C(n))=\frac{2}{3}n^3+2n^2+\frac{1}{3}n.
    \end{align*}
\end{proof}
\begin{thm}
    Let $E$ be a semistable rank $2$ sheaf with Chern classes $c_1=0$, $c_2=2$  and $c_3=0$ on $V_4$. If $E$ is stable, then either $E$ is locally free or $E$ is associated to a smooth conic $Y\subset V_4$ such that we have the exact sequence:
     \begin{align*}
         0\to E\to H^0(\theta(1))\otimes \cO_{V_4}\to \theta(1)\to 0,
     \end{align*}
     where $\theta$ is the theta-characteristic of $Y$.\\
     If $E$ is strictly semistable, then  $E$ is the extension of two ideal sheaves of lines.
\end{thm}
\begin{proof}
    See \cite[Theorem 3.5]{D}. Again the arguments in \cite{D} can be applied due to the highlights in the previous two proofs. The only new fact we need in this proof is 
    \begin{align*}
        \chi(I_Z(n))=\frac{2}{3}n^3+2n^2+\frac{7}{3}n+1-l(Z),
    \end{align*}
    when $Z$ is a zero-dimensional subscheme.
\end{proof}
\section{Relation to semistable rank $2$ bundles on $C$}
We now use the classification of semistable rank $2$ sheaves with Chern classes $c_1=0$, $c_2=2$  and $c_3=0$ to understand their relation with semistable rank $2$ bundles on $C$.
\begin{lem}
    Let $E$ be a semistable rank $2$ sheaf with Chern classes $c_1=0$, $c_2=2$  and $c_3=0$ on $V_4$. Then $E\in \mathcal{B}_{V_4}$.
\end{lem}
\begin{proof}
    It suffices to show that $H^*(E(-1))=H^*(E)=0$. If $E$ is a stable vector bundle, the result follows from Theorem \ref{auto} and \cite[Lemma B.2]{Ku3}.\\
    If $E$ is associated to a smooth conic, we have the short exact sequence:
    \begin{align*}
        0\to E\to H^0(\theta(1))\otimes\cO_{V_4}\to \theta(1)\to 0
    \end{align*}
    Since $H^*(\cO_{V_4}(-1))=H^*(\theta)=0$, we immediately obtain $H^*(E(-1))=0$. On the other hand, we have $H^0(H^0(\theta(1))\otimes\cO_{V_4})=2$ and $H^0(\theta(1))=2$. It is clear that the map $H^0(H^0(\theta(1))\otimes\cO_{V_4}) \to H^0(\theta(1))$ is surjective. Moreover, $H^i(\cO_{V_4})=H^i(\theta(1))=0$ for all $i>0$. Thus $H^*(E)=0$.\\
    If $E$ is the extension of the ideal sheaves of lines in $V_4$, the result follows from the fact that $I_l\in \mathcal{B}_{V_4}$.
\end{proof}
\begin{rem}
    It is worth noting that when $E$ is associated to a smooth conic, the short exact sequence:
    \begin{align*}
         0\to E\to H^0(\theta(1))\otimes\cO_{V_4}\to \theta(1)\to 0
    \end{align*}
    expresses the fact that $E$ is the left mutation $\mathbf{L}_{\cO_{V_4}}(\theta(1))[-1]$. This will imply $E\in \cO_{V_4}^\perp$ (hence $H^*(E)=0$) immediately.
\end{rem}

\cite[Theorem 5.10]{Ku2} proved that for any minimal instanton bundle $E$ on $V_4$, $\Phi^*(E)[-1]$ is a simple rank $2$ degree $0$ vector bundles on $C$. We generalized this result in the following theorem:
\begin{thm}\label{rep}
    Let $E$ be a semistable rank $2$ sheaf with Chern classes $c_1=0$, $c_2=2$  and $c_3=0$ on $V_4$, then $\mathcal{F}\coloneqq\Phi^*(E)[-1]$ is a semistable vector bundle of rank $2$ and degree $0$ on $C$.
\end{thm}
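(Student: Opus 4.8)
The plan is to use the functor $\Phi^{-1}=\Phi^*$ together with the Grothendieck--Riemann--Roch formula of Lemma \ref{RR} to pin down the numerical invariants of $\mathcal{F}$, and then to derive semistability from the classification theorem together with the cohomological properties of instanton sheaves. Since $E\in \mathcal{B}_{V_4}$ by the previous lemma, the object $\mathcal{F}=\Phi^*(E)[-1]$ is well defined in $\mathcal{D}^b(C)$; on a smooth curve every object of the derived category splits as a direct sum of shifts of coherent sheaves, so to show $\mathcal{F}$ is an honest vector bundle I must show its cohomology is concentrated in degree $0$ and is locally free (the latter being automatic on a curve once there is no torsion, since we can argue that $\mathcal{F}$ is torsion-free). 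First I would compute $\mathrm{ch}(\mathcal{F})$: applying Lemma \ref{RR} with $F=\mathcal{F}$ and $\Phi(\mathcal{F})=E[1]$, so $\mathrm{ch}(E)=-\mathrm{ch}(\Phi(\mathcal{F}))$, and comparing with $\mathrm{ch}(E)=2-0\cdot h+2l+0\cdot p$ (read off from $c_1(E)=0$, $c_2(E)=2$, $c_3(E)=0$) forces $\mathrm{rk}(\mathcal{F})=2$ and $\deg(\mathcal{F})=0$. This settles the numerics.

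Next I would show $\mathcal{F}$ is a sheaf in degree $0$. The key inputs are the vanishing statements for instanton sheaves: for a stable bundle $E$ we have $H^*(E)=H^*(E(-1))=0$ by Theorem \ref{auto} and the earlier lemma, and the analogous vanishing holds when $E$ is associated to a conic or is an extension of ideal sheaves of lines (established in the proof of the previous lemma). Translating through the equivalence $\Phi$ and using adjunction, the cohomology sheaves $\mathcal{H}^i(\mathcal{F})$ can be detected by computing $\Hom(\mathcal{S}_y, \mathcal{F}[i])$ or equivalently the fibrewise cohomology $H^*(C, \mathcal{F}\otimes \mathcal{S}_y)$-type groups; the instantonic vanishing, combined with the description of $\mathcal{S}$ restricted to points of $V_4$, forces concentration in a single degree. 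Equivalently, and perhaps more cleanly, one argues that $\mathcal{H}^i(\mathcal{F})=0$ for $i\neq 0$ by using base change and the fact that $E$ and its twist have no higher cohomology; the shift $[-1]$ is precisely what is needed so that the surviving cohomology sits in degree $0$. Since $\mathcal{F}$ has rank $2$ and its degree-$0$ cohomology sheaf already accounts for the full rank, there is no room for torsion or for cohomology in other degrees, so $\mathcal{F}$ is a rank $2$ vector bundle.

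Finally, for semistability, I would invoke Lemma \ref{scurve} (Faltings' criterion): it suffices to produce a vector bundle $G$ on $C$ with $H^0(\mathcal{F}\otimes G)=H^1(\mathcal{F}\otimes G)=0$. The natural candidate comes from the bundle $\mathcal{S}$ itself --- for a suitable twist, $H^*(C,\mathcal{F}\otimes \mathcal{S}_y)$ vanishes for generic (or every) $y\in V_4$ precisely because $E=\Phi(\mathcal{F}[1])$ has the instanton vanishing $H^*(E)=H^*(E(-1))=0$; this is exactly the condition appearing in Kuznetsov's Theorem \cite[Theorem 5.10]{Ku2}. Unwinding the Fourier--Mukai kernel, $\Hom_{\mathcal{D}^b(V_4)}(\mathcal{S}_y, E)=\Hom_{\mathcal{D}^b(C)}(\cO_y, \mathcal{F}[1]\otimes(\text{twist}))$ type identities give the needed vanishing of $H^0$ and $H^1$ of $\mathcal{F}$ against the relevant bundle. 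I expect the main obstacle to be handling the two non-locally-free cases of the classification uniformly: for the conic case and the extension-of-ideal-sheaves case one must verify that the same cohomological vanishing persists, which requires tracking $\Phi^*$ through the exact sequences $0\to E\to H^0(\theta(1))\otimes\cO_{V_4}\to\theta(1)\to 0$ and $0\to I_{l_1}\to E\to I_{l_2}\to 0$, computing $\Phi^*(\theta(1))$ and $\Phi^*(I_l)$ (the latter is a degree $0$ line bundle by Theorem \ref{ideal}), and checking that the resulting extension of line bundles on $C$ is semistable of degree $0$ --- which for line bundles of degree $0$ on a curve is automatic, so the extension is too.
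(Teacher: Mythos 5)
Your overall architecture matches the paper's: split into the three cases of the classification, get the numerics from Lemma \ref{RR}, quote Kuznetsov plus Faltings (Lemma \ref{scurve}) in the locally free case, and observe that an extension of the degree $0$ line bundles $\Phi^*(I_{l_i})[-1]$ is automatically semistable in the strictly semistable case. Those parts are fine. But there are two genuine gaps in the remaining case, where $E$ is associated to a smooth conic. First, your argument that $\mathcal{F}$ is concentrated in degree $0$ does not work as stated: the vanishings $H^*(E)=H^*(E(-1))=0$ only say that $E\in\mathcal{B}_{V_4}$, i.e.\ that $E$ is orthogonal to $\cO_{V_4}$ and $\cO_{V_4}(1)$; they say nothing about $\Ext^\bullet(\mathcal{S}_y,E)$, which is what controls the fibres of $\mathcal{F}$. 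What is actually needed is a pointwise computation: the paper reduces $\mathcal{F}_x$ to $H^{-\bullet}(\pP^1,\mathcal{S}_x|_Y\otimes\cO_{\pP^1}(1))$ and uses that $\mathcal{S}_x^*$ is globally generated (so $\mathcal{S}_x|_Y\simeq\cO_{\pP^1}(-1)^{\oplus 2}$) to conclude $\mathcal{F}_x=\C^2[0]$ for every $x$. Without some such fibrewise input, "no room for torsion" is circular --- you are assuming the degree-$0$ cohomology already has full rank, which is what you need to prove.

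Second, your proposed route to semistability in this case fails numerically. Kuznetsov's Theorem 5.10 applies only to instanton \emph{bundles}, so it cannot be quoted here, and Faltings' criterion with $G=\mathcal{S}_y$ cannot succeed: $\mathcal{F}\otimes\mathcal{S}_y$ has rank $4$ and degree $2$ on a genus $2$ curve, so $\chi(\mathcal{F}\otimes\mathcal{S}_y)=-2\neq 0$ and $H^0$ and $H^1$ cannot both vanish (the auxiliary bundle that works in the locally free case is the Raynaud bundle $\mathcal{R}$, not $\mathcal{S}_y$). The paper instead proves semistability (in fact stability) of $\mathcal{F}=\Phi^*(\theta(1))[-2]$ directly: a destabilizing quotient would be a degree $0$ line bundle $\mathcal{L}$, and by adjunction $\Hom(\mathcal{F},\mathcal{L})=\Ext^1(\theta(1),I_l)$ for the line $l$ with $\Phi(\mathcal{L})=I_l[1]$ (Theorem \ref{ideal}); this group vanishes by chasing $0\to I_l\to\cO_{V_4}\to\cO_l\to 0$ and Serre duality. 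You would need to supply an argument of this kind; your sketch leaves the conic case essentially unproved.
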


\begin{proof}
    If $E$ is a vector bundle, by \cite[Theorem 5.10]{Ku2}, $\mathcal{F}=\Phi^*(E)[-1]$ is a rank $2$ degree $0$ vector bundle on $C$ such that
    \begin{align*}
        \Hom_C(\mathcal{F},\mathcal{R})=\Ext_C^1(\mathcal{F},\mathcal{R})=0,
    \end{align*}
    where $\mathcal{R}$ is a second Raynaud bundle. By Lemma \ref{scurve}, $\mathcal{F}$ is semistable.\\
    If $E$ is associated to a smooth conic $Y$, we have the exact triangle in $\mathcal{D}^b(V_4)$:
    \begin{align*}
        \theta(1)[-1]\to E\to H^0(\theta(1))\otimes\cO_{V_4}.
    \end{align*}
    Apply the functor $\Phi^*(\cdot)[-1]$ and noting $\Phi^*(\cO_{V_4})=0$, we obtain
    \begin{align*}
        \mathcal{F}= \Phi^*(\theta(1))[-2]
    \end{align*}
    Recall as pointed out in the proof of \cite[Theorem 5.10]{Ku2}, $\Phi^*$ is a Fourier-Mukai transform with the kernel $\mathcal{S}^*\otimes p_{V_4}^*\cO_{V_4}(-2)[3]$. Thus the fiber of the object $\mathcal{F}$ at a point $x\in C$ is given by 
    \begin{align*}
        \mathcal{F}_x&=H^{\bullet+1}(V_4,\mathcal{S}_x^*\otimes \theta(-1))\\
        &=H^{\bullet+1}(Y,\mathcal{S}_x^*|_Y\otimes \theta(-1))\\
        &=H^{-\bullet}(\pP^1,\mathcal{S}_x|_{Y}\otimes\cO_{\pP^1}(1))^*
    \end{align*}
    Now $\mathcal{S}_x|_Y$ is a rank $2$ bundle on $Y\simeq \pP^1$ with degree $-2$. Moreover, we note $H^0(V_4,\mathcal{S}^*_x)=\C^4$ and the induced map $\cO_{V_4}^{\oplus 4}\to \mathcal{S}^*_x$ is surjective (see \cite[Proposition 5.7]{Ku2}). Hence $\mathcal{S}^*_x|_Y$ as a sheaf on $Y$ is generated by global sections. Thus
    \begin{align*}
        \mathcal{F}_x=\C^2[0]
    \end{align*}
    for all $x\in C$. Hence $\mathcal{F}$ is a vector bundle of rank $2$. By Lemma \ref{RR}, we see that $\mathcal{F}$ has degree $0$.\\
    
   It remains to see that $\mathcal{F}$ is semistable. Note that a vector bundle $\mathcal{F}$ of rank $2$ and degree $0$ is not stable if and only if there is a nontrivial morphism $\mathcal{F}\to \mathcal{L}$, where $\mathcal{L}$ is a line bundle of degree $0$. By adjunction
    \begin{align*}
        \Hom(\mathcal{F},\mathcal{L})=\Hom(\Phi^*(\theta(1))[-2],\mathcal{L})&=\Hom(\theta(1),\Phi(\mathcal{L})[2])\\
        &=\Hom(\theta(1),I_l[1])\\
        &=\Ext^1(\theta(1),I_l)
    \end{align*}
    where $l$ is a line on $V_4$ by Theorem \ref{ideal}. Apply $\Ext^\bullet(\theta(1),-)$ to the short exact sequence $0\to I_l\to \cO_{V_4}\to \cO_l\to 0$, we have
    \begin{align*}
        \Hom(\theta(1),\cO_l)\to \Ext^1(\theta(1),I_l)\to \Ext^1(\theta(1),\cO_{V_4})
    \end{align*}
    Now the first space is zero since $\theta$ is supported on a smooth conic, while the last space is Serre dual to $\Ext^2(\cO_{V_4}(2),\theta(1))=H^2(\theta(-1))=0$. Thus $\Ext^1(\theta(1),I_l)=0$ and $\mathcal{F}$ is in fact stable in this case.

    If $E$ is the extension of ideal sheaves of lines in $V_4$, then we have the short exact sequence:
    \begin{align}
        0\to I_{l_1}\to E\to I_{l_2}\to 0
    \end{align}
    Applying the functor $\Phi^*(\cdot)[-1]$, we have exact triangle
    \begin{align}\label{aa}
        \Phi^*(I_{l_1})[-1]\to \mathcal{F}\to \Phi^*(I_{l_2})[-1]
    \end{align}
    By \cite[Lemma 5.5]{Ku2},  $\Phi^*(I_{l_i})[-1]$ are line bundles of degree $0$ on $C$, thus $\mathcal{F}$ is a strictly semistable rank $2$ bundle of degree $0$.
\end{proof}

\begin{prop}\label{ss}
  Let $E$ be a stable(strictly semistable) rank $2$ sheaf with Chern classes $c_1=0, c_2=2$ and $c_3=0$ on $V_4$, then $\Phi^*(E)[-1]$ is a stable(strictly semistable) vector bundle on $C$.
\end{prop}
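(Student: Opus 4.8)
The plan is to reduce everything to Theorem~\ref{rep} and to the classification of instanton sheaves, leaving only one genuinely new case. If $E$ is strictly semistable there is nothing left to prove: by the classification theorem $E$ fits in an extension $0\to I_{l_1}\to E\to I_{l_2}\to 0$, and applying $\Phi^*(-)[-1]$ to it, exactly as in the last paragraph of the proof of Theorem~\ref{rep}, displays $\mathcal{F}=\Phi^*(E)[-1]$ as an extension of the two degree-$0$ line bundles $\Phi^*(I_{l_i})[-1]$; a rank-$2$, degree-$0$ bundle possessing a degree-$0$ sub-line bundle is strictly semistable, so we are done in that case. If $E$ is stable and associated to a smooth conic, then the stability of $\mathcal{F}$ was already established inside the proof of Theorem~\ref{rep} (via $\Ext^1(\theta(1),I_l)=0$). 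Hence the only remaining case is $E$ a stable \emph{locally free} instanton bundle, which I treat by contradiction.

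So suppose $E$ is a stable instanton bundle but $\mathcal{F}=\Phi^*(E)[-1]$ is not stable. By Theorem~\ref{rep}, $\mathcal{F}$ is semistable of rank $2$ and degree $0$; being non-stable, it admits a saturated sub-line bundle $\mathcal{L}_1\subset\mathcal{F}$ of degree $0$ with locally free quotient $\mathcal{L}_2$, necessarily also of degree $0$, giving a short exact sequence $0\to\mathcal{L}_1\to\mathcal{F}\to\mathcal{L}_2\to 0$ on $C$. I now transport this through the equivalence $\Phi$. Shifting by $[1]$ and using $\Phi(\mathcal{F}[1])=\Phi(\Phi^*(E))=E$ (legitimate since $E\in\mathcal{B}_{V_4}$), one obtains an exact triangle $\Phi(\mathcal{L}_1[1])\to E\to\Phi(\mathcal{L}_2[1])\to\Phi(\mathcal{L}_1[1])[1]$ in $\mathcal{D}^b(V_4)$. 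By Theorem~\ref{ideal} the map $G$ is an isomorphism $F(V_4)\xrightarrow{\sim}\mathrm{Pic}^0(C)$ with $\Phi(G(l))=I_l[-1]$, so $\Phi(G(l)[1])=I_l$; hence $\Phi(\mathcal{L}_i[1])=I_{l_i}$ for unique lines $l_1,l_2\subset V_4$. The triangle thus reads $I_{l_1}\to E\to I_{l_2}\to I_{l_1}[1]$ with all three terms honest coherent sheaves, so passing to cohomology sheaves it is the triangle of a short exact sequence $0\to I_{l_1}\to E\to I_{l_2}\to 0$ on $V_4$.

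To finish, compare reduced Hilbert polynomials. From $0\to I_l\to\cO_{V_4}\to\cO_l\to 0$, together with $\cO_l(n)\cong\cO_{\pP^1}(n)$ (a line has degree $1$) and the value $\chi(\cO_{V_4}(n))=\tfrac{2}{3}n^3+2n^2+\tfrac{7}{3}n+1$ from Section~3, one computes $\chi(I_l(n))=\tfrac{2}{3}n^3+2n^2+\tfrac{4}{3}n=\tfrac12\,\chi(E(n))$, using $\chi(E(n))=\tfrac{4}{3}n^3+4n^2+\tfrac{8}{3}n$. Therefore $I_{l_1}$ is a proper torsion-free subsheaf of $E$ whose reduced Hilbert polynomial equals that of $E$, contradicting the stability of $E$; hence $\mathcal{F}$ is stable. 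I expect the only delicate point to be the derived-category bookkeeping in the middle step --- tracking the shifts and verifying that $\Phi$ sends a degree-$0$ line bundle (suitably shifted) to the ideal sheaf of a line --- after which the numerical coincidence $\chi(I_l(n))=\tfrac12\chi(E(n))$, which is precisely the reason a stable $E$ cannot be an extension of ideal sheaves of lines, closes the argument.
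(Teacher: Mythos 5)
Your proof is correct and follows essentially the same route as the paper: the paper also establishes semistability via Theorem~\ref{rep} and then shows, by transporting a Jordan--H\"older filtration of $\mathcal{F}$ through $\Phi$ and Theorem~\ref{ideal}, that a non-stable $\mathcal{F}$ forces $E$ to be an extension of two ideal sheaves of lines, hence strictly semistable. Your only additions are the (unnecessary but harmless) case split according to the classification and the explicit verification $\chi(I_l(n))=\tfrac12\chi(E(n))$, which the paper leaves implicit.
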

\begin{proof}
    By the above theorem, $\Phi^*(E)[-1]$ is always semistable. By the proof of the above theorem, we see that for a strictly semistable instanton $E$, $\Phi^*(E)[-1]$ is a strictly semistable vector bundle on $C$.
    Suppose for a semistable rank $2$ sheaf with Chern classes $c_1=0$, $c_2=2$  and $c_3=0$ $E$, $\mathcal{F}=\Phi^*(E)[-1]$ is a strictly semistable vector bundle on $C$. Let
    \begin{align*}
        0\to \mathcal{L}_1\to\mathcal{F}\to \mathcal{L}_2\to 0
    \end{align*}
    be a Jordan-H\"older filtration. Then $\mathcal{L}_1,\mathcal{L}_2$ have to be degree $0$ line bundles. Apply the functor $\Phi(\cdot)[1]$, by \cite[Lemma 5.5]{Ku2}, there exist two lines $l_1,l_2$ in $V_4$ such that 
    \begin{align*}
        0\to I_{l_1}\to E\to I_{l_2}\to 0
    \end{align*}
    is exact. Hence $E$ is a strictly semistable sheaf.
    
\end{proof}
By now we have established a well-behaved correspondence between (semi)stable rank $2$ sheaves with Chern classes $c_1=0$, $c_2=2$  and $c_3=0$ on $V_4$ and (semi)stable rank $2$ degree $0$ vector bundles on $C$. Next is to use this correspondence to analyze the two moduli spaces.

\section{Moduli Space of Instantons}
We start by showing the smoothness of $M_{V_4}$. To do this we first compute some related invariants.
\begin{lem}\label{smooth1}
    
    Let $\theta$ be the theta characteristic of a smooth conic $Y$ in $V_4$. Let $E$ be the kernel of the natural surjection $H^0(\theta(1))\otimes\cO_{V_4}\to \theta(1)$. Then $\Ext^2(E,E)=0$ and $\Ext^1(E,E)$ has dimension $5$.
\end{lem}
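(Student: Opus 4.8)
The plan is to sidestep any direct computation of $\Ext^\bullet(E,E)$ on $V_4$ and instead transport the problem to the associated curve $C$, where it reduces to Riemann--Roch. Since $E$ is an instanton sheaf it lies in $\mathcal{B}_{V_4}$, and by Theorem~\ref{rep} together with Proposition~\ref{ss} (see in particular the treatment of the conic case there) the object $\mathcal{F}\coloneqq\Phi^*(E)[-1]$ is a \emph{stable} vector bundle of rank $2$ and degree $0$ on $C$.

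First I would record that the equivalence $\Phi\colon\mathcal{D}^b(C)\xrightarrow{\sim}\mathcal{B}_{V_4}$, composed with the inclusion $\mathcal{B}_{V_4}\hookrightarrow\mathcal{D}^b(V_4)$, is fully faithful; hence for objects of $\mathcal{B}_{V_4}$ the $\Ext$-groups computed in $\mathcal{D}^b(V_4)$ agree with those computed on $C$. Since $E\in\mathcal{B}_{V_4}$ we have $E\cong\Phi(\Phi^*E)$, so
\begin{align*}
\Ext^i_{V_4}(E,E)\;\cong\;\Ext^i_C\big(\Phi^*(E),\Phi^*(E)\big)\;=\;\Ext^i_C(\mathcal{F},\mathcal{F})\qquad\text{for all }i,
\end{align*}
the common shift $[-1]$ being irrelevant to $\Ext$ in a fixed degree. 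As $C$ is a smooth projective curve and $\mathcal{F}$ is locally free, $\Ext^i_C(\mathcal{F},\mathcal{F})=H^i(C,\mathcal{F}^*\otimes\mathcal{F})=0$ for $i\geq 2$, which already gives $\Ext^2(E,E)=0$. For the first $\Ext$: stability of $\mathcal{F}$ forces it to be simple, so $\dim\Hom_C(\mathcal{F},\mathcal{F})=1$; and Riemann--Roch on the genus $2$ curve $C$ gives
\begin{align*}
\chi_C(\mathcal{F},\mathcal{F})=\mathrm{deg}(\mathcal{F}^*\otimes\mathcal{F})+\mathrm{rk}(\mathcal{F}^*\otimes\mathcal{F})\,(1-g)=0+4\cdot(1-2)=-4,
\end{align*}
since $\mathcal{F}^*\otimes\mathcal{F}$ is self-dual of rank $4$ and degree $0$. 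Hence $\dim\Ext^1_C(\mathcal{F},\mathcal{F})=1-(-4)=5$, and transporting back, $\dim\Ext^1(E,E)=5$.

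Once the two cited inputs are available the argument is immediate, so the only points needing care are exactly those: that $E$ belongs to $\mathcal{B}_{V_4}$, and that $\Phi^*(E)[-1]$ is genuinely stable (hence simple, with one-dimensional endomorphism space) rather than merely semistable. If one preferred to stay on $V_4$, one could instead apply $\Hom(E,-)$ to the defining sequence $0\to E\to H^0(\theta(1))\otimes\cO_{V_4}\to\theta(1)\to 0$, having first checked that $E^*\cong\cO_{V_4}^{\oplus 2}$ and identified the local $\Ext$-sheaves of $E$; there the real obstacle is computing $\Ext^\bullet_{V_4}(\theta(1),\theta(1))$, which forces one to pin down the normal bundle of the conic $Y\subset V_4$. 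The derived-category route avoids this entirely, so I would present it as the proof.
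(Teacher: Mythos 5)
Your proof is correct and uses the same key idea as the paper: transporting the computation to the curve $C$ via the equivalence $\Phi$ and the fact that $E\in\mathcal{B}_{V_4}$. The only (cosmetic) difference is that you carry out the entire computation on $C$ — getting $\Ext^{\geq 2}=0$, simplicity, and $\chi(\mathcal{F},\mathcal{F})=-4$ there — whereas the paper transports only $\Ext^2(E,E)$ to the curve and then handles $\Ext^3(E,E)$ by Serre duality, $\Hom(E,E)=\C$ by stability, and $\chi(E,E)=-4$ by Riemann--Roch directly on $V_4$; your version is, if anything, slightly more economical.
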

\begin{proof}
    By Theorem \ref{rep}, $E=\Phi(\mathcal{F})[1]$, where $\mathcal{F}$ is a rank $2$ bundle on $C$. Thus
    \begin{align*}
        \Ext^2(E,E)=\Ext^2(\Phi(\mathcal{F})[1],\Phi(\mathcal{F})[1])=\Ext_C^2(\mathcal{F},\mathcal{F}).
    \end{align*}
    The last space is $0$ since $C$ is a curve.
   
    Now $\Ext^3(E,E)\simeq\Hom(E,E(-2))^*=0$ and $\Hom(E,E)=\C$. By Riemann-Roch, $\chi(E,E)=-4$. Thus $\Ext^1(E,E)$ is five dimensional.
\end{proof}

\begin{lem}\label{smooth 2}
    Let $l_1,l_2\subset V_4$ be two lines. Then $\Ext^2(I_{l_1},I_{l_2})=0$ and $\dim\Ext^1(I_{l_1},I_{l_2})=1$ if $l_1\neq l_2$ and $2$ if $l_1=l_2$.
\end{lem}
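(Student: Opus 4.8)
The plan is to transport the computation to the genus $2$ curve $C$ by means of the equivalence $\Phi$. First I would recall that $I_{l_i}\in\mathcal B_{V_4}$, so by Theorem \ref{ideal} we may write $I_{l_i}=\Phi(\mathcal L_i)[1]$, where $\mathcal L_i\coloneqq G(l_i)=\Phi^{-1}(I_{l_i}[-1])$ is a line bundle of degree $0$ on $C$. Since $G$ is an isomorphism, $\mathcal L_1\cong\mathcal L_2$ if and only if $l_1=l_2$. Because $\Phi$ is fully faithful and shifting both arguments by $[1]$ does not change $\Hom$-groups, we obtain for all $k$ an identification
\begin{align*}
    \Ext^k(I_{l_1},I_{l_2})\cong\Ext^k_C(\mathcal L_1,\mathcal L_2)\cong H^k\!\left(C,\mathcal L_1^{-1}\otimes\mathcal L_2\right).
\end{align*}

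Next, since $C$ is a smooth projective curve, $H^k(C,-)=0$ for $k\geq 2$, which immediately gives $\Ext^2(I_{l_1},I_{l_2})=0$. For $\Ext^1$, set $N\coloneqq\mathcal L_1^{-1}\otimes\mathcal L_2$, a line bundle of degree $0$ on $C$. Riemann--Roch on the genus $2$ curve $C$ gives $h^0(N)-h^1(N)=\deg N+1-g=-1$. If $l_1\neq l_2$, then $N$ is a nontrivial degree $0$ line bundle, so $h^0(N)=0$ and hence $h^1(N)=1$, i.e.\ $\dim\Ext^1(I_{l_1},I_{l_2})=1$. If $l_1=l_2$, then $N=\cO_C$, so $h^0(N)=1$ and $h^1(N)=2$, i.e.\ $\dim\Ext^1(I_{l_1},I_{l_2})=2$. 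This is exactly the assertion.

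I do not expect any real obstacle here: the content is entirely contained in Theorem \ref{ideal} together with the fact that $\Phi$ is an equivalence onto $\mathcal B_{V_4}$. The only minor point requiring care is bookkeeping of the shift $[1]$ in the identification $\Ext^\bullet(\Phi(-)[1],\Phi(-)[1])\cong\Ext^\bullet_C(-,-)$, and the observation that distinct lines correspond under $G$ to distinct (hence non-isomorphic, since both have degree $0$) line bundles on $C$.
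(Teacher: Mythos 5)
Your proof is correct, and it takes a mildly but genuinely different route from the paper's. The paper uses the equivalence $\Phi$ only to kill $\Ext^2(I_{l_1},I_{l_2})$ (by identifying it with an $\Ext^2$ on the curve $C$); it then stays on the threefold, showing $\Ext^3(I_{l_1},I_{l_2})\simeq\Hom(I_{l_2},I_{l_1}(-2))^*=0$ by Serre duality and computing $\chi(I_{l_1},I_{l_2})=-1$ by Riemann--Roch on $V_4$, from which the two values of $\dim\Ext^1$ follow once one also knows $\hom(I_{l_1},I_{l_2})=0$ or $1$ according to whether $l_1\neq l_2$ or $l_1=l_2$ (a step the paper leaves implicit, using that ideal sheaves of lines are stable of the same reduced Hilbert polynomial, hence simple with no nonzero maps between non-isomorphic ones). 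You instead push the entire computation to $C$: all $\Ext$-groups become $H^k(C,\mathcal L_1^{-1}\otimes\mathcal L_2)$ for a degree $0$ line bundle on a genus $2$ curve, and Riemann--Roch on $C$ together with the vanishing of $h^0$ for a nontrivial degree $0$ line bundle gives everything at once. Your version is arguably cleaner and more self-contained: it needs neither Serre duality on $V_4$ nor the simplicity/stability of $I_l$, only the fact from Theorem \ref{ideal} that $G$ is an isomorphism onto $\mathrm{Pic}^0(C)$ (so distinct lines give non-isomorphic degree $0$ line bundles). The paper's version has the minor advantage of exhibiting the threefold-side Euler characteristic $\chi(I_{l_1},I_{l_2})=-1$ directly, which is the deformation-theoretic quantity one often wants to see. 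Your bookkeeping of the shift $[1]$ is correct.
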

\begin{proof}
    Since $I_{l_1}, I_{l_2}\in \mathcal{B}_{V_4}$, we have 
    \begin{align*}
        \Ext^2(I_{l_1}, I_{l_2})&=\Ext_{\mathcal{D}^b(C)}^2(\Phi^{-1}(I_{l_1}[-1]),\Phi^{-1}(I_{l_2}[-1]))\\
        &=\Ext^2_C(\mathcal{L}_1,\mathcal{L}_2)
    \end{align*}
    where $\mathcal{L}_i$ is the  line bundle corresponding to $l_i$ as in Theorem \ref{ideal}. Since $C$ is a curve, we see that the above extension group is $0$.\\
    Moreover, $\Ext^3(I_{l_1},I_{l_2})\simeq\Hom(I_{l_2},I_{l_1}(-2))^*=0$. By Riemann-Roch, $\chi(I_{l_1},I_{l_2})=-1$. Thus the lemma follows.
\end{proof}

Let $N\geq 1$ be an integer and $V$ a complex vector space. Let $Q$ be the Quot scheme parametrizing quotients $V\otimes \cO_{V_4}(-N)\to E$ of rank $2$ on $V_4$ and Chern classes $c_1(E)=0$, $c_2(E)=2$ and $c_3(E)=0$. Let $L$ denote the natural polarization on $Q$ (See \cite[Section 1]{Si}). The group $G=PGL(V)$ acts on $Q$ and a suitable power of $L$ is $G$-linearized. Let $Q_c^{ss}$ be the $PGL(V)$-semistable points corresponding to quotients without torsion and $Q_c$ the closure of $Q_c^{ss}$ in $Q$. When the integer $N$ and
the vector space V are suitably chosen the following properties are satisfied. The map $V\otimes \cO_{V_4}\to E(N)$ induces an isomorphism $V\simeq H^0(E(N))$ and $h^i(E(k))=0$ for $k\geq N$ and $i\geq 1$ and for all $E$ in $Q_c$. The point $[E]\in Q_c$ is semistable if and only if the sheaf $E$ is semistable if and only if $E\in Q_c^{ss}$. The stabilizer of $[E]$ in $GL (V)$ is identified with the group of
automorphisms of the sheaf $E$ and the moduli space is then the GIT quotient $Q_c^{ss}//G$.
\begin{lem}\label{smooth3}
With the above hypotheses, the scheme $Q^{ss}_c$ is smooth.
\end{lem}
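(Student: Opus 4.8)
The plan is to run the standard deformation-theoretic argument for Quot schemes and reduce smoothness to the vanishing $\Ext^2_{V_4}(E,E)=0$, which is essentially already contained in Theorem~\ref{rep}. Recall that a point of $Q$ is a surjection $\rho\colon V\otimes\cO_{V_4}(-N)\twoheadrightarrow E$ with kernel $K$, and that the deformation theory of the Quot scheme gives $T_{[\rho]}Q=\Hom_{V_4}(K,E)$ with obstructions in $\Ext^1_{V_4}(K,E)$. Since $Q^{ss}_c$ is an open subscheme of $Q$ (it is the locus of torsion-free, GIT-semistable quotients, both open conditions), it therefore suffices to check that $\Ext^1_{V_4}(K,E)=0$ for every $[\rho]\in Q^{ss}_c$: such a vanishing forces $\dim_{[\rho]}Q=\dim T_{[\rho]}Q$, hence smoothness of $Q$, and so of $Q^{ss}_c$, at $[\rho]$.

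First I would apply $\Hom_{V_4}(-,E)$ to the defining sequence $0\to K\to V\otimes\cO_{V_4}(-N)\to E\to 0$. Since $\Ext^i_{V_4}(V\otimes\cO_{V_4}(-N),E)=V^\vee\otimes H^i(V_4,E(N))$, and $H^i(V_4,E(N))=0$ for all $i\geq 1$ and all $E$ in $Q_c$ by the choice of $N$ and $V$ recalled above, the long exact sequence collapses to an isomorphism
\[
\Ext^1_{V_4}(K,E)\;\simeq\;\Ext^2_{V_4}(E,E).
\]
This reduces the lemma to proving $\Ext^2_{V_4}(E,E)=0$ for every semistable instanton sheaf $E$ — note that a point of $Q^{ss}_c$ is precisely a semistable rank $2$ sheaf with $c_1=0$, $c_2=2$, $c_3=0$, i.e.\ an instanton sheaf.

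For this last step I would argue uniformly, avoiding the case division of the classification. By the Lemma stating that every instanton sheaf lies in $\mathcal{B}_{V_4}$, we have $E\in\mathcal{B}_{V_4}$, and since $\mathcal{B}_{V_4}\subset\mathcal{D}^b(V_4)$ is a full admissible triangulated subcategory, $\Ext^2_{V_4}(E,E)=\Hom_{\mathcal{B}_{V_4}}(E,E[2])$. Transporting along the equivalence $\Phi\colon\mathcal{D}^b(C)\simeq\mathcal{B}_{V_4}$ and setting $\mathcal{F}=\Phi^*(E)[-1]$, so that $E\simeq\Phi(\mathcal{F})[1]$, this group becomes
\[
\Ext^2_{V_4}(E,E)\;\simeq\;\Ext^2_C(\mathcal{F},\mathcal{F}).
\]
By Theorem~\ref{rep}, $\mathcal{F}$ is a vector bundle on the smooth projective curve $C$, so $\Ext^2_C(\mathcal{F},\mathcal{F})=0$; for the conic case this is exactly the computation already made in Lemma~\ref{smooth1}. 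Combining the two reductions yields $\Ext^1_{V_4}(K,E)=0$ on all of $Q^{ss}_c$, hence $Q^{ss}_c$ is smooth.

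The argument is short, and its only genuinely load-bearing ingredients are Theorem~\ref{rep} and the bookkeeping of the Quot-scheme obstruction theory; the one point deserving care is the identification $\Ext^1_{V_4}(K,E)\simeq\Ext^2_{V_4}(E,E)$, which relies on the uniform cohomology vanishing $H^{\geq 1}(V_4,E(N))=0$ built into the choice of $N$ and $V$, together with the observation that passing to $\mathcal{F}=\Phi^*(E)[-1]$ produces an honest sheaf on a curve even for strictly semistable $E$, so that the obstruction vanishing holds on the whole of $Q^{ss}_c$ and not merely on its locally free locus.
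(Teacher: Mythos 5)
Your proposal is correct and follows essentially the same route as the paper: the same Quot-scheme obstruction theory reducing smoothness to $\Ext^1(K,E)=0$, the same long exact sequence using $h^{\geq 1}(E(N))=0$ to compare with $\Ext^2(E,E)$, and the same final vanishing via $E\simeq\Phi(\mathcal{F})[1]$ with $\mathcal{F}$ a sheaf on the curve $C$. The only cosmetic difference is that you upgrade the paper's injection $\Ext^1(K,E)\hookrightarrow\Ext^2(E,E)$ to an isomorphism, which is harmless since only the vanishing is needed.
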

\begin{proof}
    The tangent space of $Q^{ss}_c$ at a point $[E]$ is isomorphic to $\Hom(F,E)$ where $F$ is the kernel of the map $V\otimes \cO_{V_4}(-N)\to E$. The scheme $Q_c^{ss}$ is smooth at the point if $\Ext^1(F,E)=0$. Consider the exact sequence:
    \begin{align*}
        \Ext^1(V\otimes \cO_{V_4}(-N),E)\to \Ext^1(F,E)\to \Ext^2(E,E)
    \end{align*}
    We then obtain an inclusion $\Ext^1(F,E)\to \Ext^2(E,E)$ since $h^1(E(N))=0$. It suffices then to prove that $\Ext^2(E,E)=0$. But this follows from the fact $E=\Phi(\mathcal{F})[1]$ where $\mathcal{F}$ is a sheaf on $C$, as we have seen in the proof of Lemma \ref{smooth1} and \ref{smooth 2}.
\end{proof}
\begin{thm}\label{smooth}
     The moduli space $M_{V_4}$ of semistable sheaves of rank $2$ with Chern classes $c_1(E)=0,c_2(E)=2,c_3(E)=0$ on $V_4$ is smooth of dimension $5$.
\end{thm}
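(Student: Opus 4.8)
The plan is to exploit the GIT presentation $M^{inst}=Q^{ss}_c/\!/PGL(V)$ recalled above, the smoothness of $Q^{ss}_c$ (Lemma \ref{smooth3}), and Luna's \'etale slice theorem, treating the stable and the strictly semistable loci separately. First I would set up the slice at a point $[E_0]\in Q^{ss}_c$ with $E_0$ polystable (so its $PGL(V)$-orbit is closed and its stabilizer is $\mathrm{Aut}(E_0)/\C^*$, where $\C^*$ denotes the scalars). Writing $0\to F\to V\otimes\cO_{V_4}(-N)\to E_0\to 0$ and applying $\Hom(-,E_0)$, using $V\simeq H^0(E_0(N))$ and $\Ext^1(V\otimes\cO_{V_4}(-N),E_0)=0$, one obtains a four-term exact sequence $0\to\Hom(E_0,E_0)\to\Hom(V,V)\to\Hom(F,E_0)\to\Ext^1(E_0,E_0)\to 0$; since $\Hom(F,E_0)=T_{[E_0]}Q^{ss}_c$ and the image of $\Hom(V,V)$ is the tangent space to the orbit, the Luna slice is identified, as a representation of $\mathrm{Aut}(E_0)/\C^*$, with the vector space $\Ext^1(E_0,E_0)$ carrying its natural linear action (on which the scalars act trivially). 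Because $Q^{ss}_c$ is genuinely smooth, there is no quadratic-cone correction, and Luna's theorem gives that, \'etale-locally near $[E_0]$, $M^{inst}\cong\Ext^1(E_0,E_0)/\!/\mathrm{Aut}(E_0)$.

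On the stable locus this finishes things quickly. For $E$ stable, $\mathrm{Aut}(E)=\C^*$, so \'etale-locally $M^{inst}\cong\Ext^1(E,E)$. Every instanton sheaf satisfies $E\simeq\Phi(\mathcal F)[1]$ with $\mathcal F$ a rank $2$, degree $0$ sheaf on $C$ (Theorem \ref{rep}, using $E\in\mathcal{B}_{V_4}$), so $\Ext^i_{V_4}(E,E)\simeq\Ext^i_C(\mathcal F,\mathcal F)$ vanishes for $i\ge 2$; since $\mathcal F$ is stable (Proposition \ref{ss}), $\dim\Hom_C(\mathcal F,\mathcal F)=1$, and Riemann--Roch on the genus $2$ curve $C$ gives $\chi_C(\mathcal F,\mathcal F)=-4$, whence $\dim\Ext^1(E,E)=5$ (consistent with Lemma \ref{smooth1}). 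Thus $M^{inst}$ is smooth of dimension $5$ along the stable locus.

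The real work is the strictly semistable locus. By the classification of instanton sheaves (Section 3), a strictly semistable $E$ is an extension of two ideal sheaves of lines, hence $S$-equivalent to a polystable $E_0=I_{l_1}\oplus I_{l_2}$ with $l_1,l_2\in F(V_4)$; so the non-stable points of $M^{inst}$ are precisely the classes $[E_0]$, and I must show smoothness of dimension $5$ at each. If $l_1\ne l_2$, then $I_{l_1},I_{l_2}$ are non-isomorphic stable sheaves of the same slope, so $\mathrm{Aut}(E_0)=\C^*\times\C^*$, and by Lemma \ref{smooth 2}
\[
\Ext^1(E_0,E_0)=\Ext^1(I_{l_1},I_{l_1})\oplus\Ext^1(I_{l_2},I_{l_2})\oplus\Ext^1(I_{l_1},I_{l_2})\oplus\Ext^1(I_{l_2},I_{l_1}),
\]
of dimensions $2,2,1,1$, the torus acting trivially on the first two summands and through opposite characters on the last two; hence the GIT quotient is $\C^2\times\C^2\times\big((\C\oplus\C)/\!/\C^*\big)\cong\C^5$, which is smooth. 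If $l_1=l_2=l$, then $E_0=I_l\otimes\C^2$, $\mathrm{Aut}(E_0)=GL_2$, and $\Ext^1(E_0,E_0)=\Ext^1(I_l,I_l)\otimes\mathrm{End}(\C^2)\cong\mathrm{End}(\C^2)^{\oplus 2}$ (using $\dim\Ext^1(I_l,I_l)=2$), with $GL_2$ acting by simultaneous conjugation; by the classical description of the invariants of a pair of $2\times 2$ matrices, $\C[\mathrm{End}(\C^2)^{\oplus 2}]^{GL_2}$ is the polynomial ring generated by $\mathrm{tr}A,\mathrm{tr}B,\mathrm{tr}A^2,\mathrm{tr}AB,\mathrm{tr}B^2$ (algebraically independent, as the quotient has dimension $8-3=5$), so again the quotient is $\cong\C^5$, smooth of dimension $5$. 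Putting the two loci together proves the theorem. The hard part is exactly this last step: getting the \'etale-local model right at the polystable points and recognizing the pair-of-matrices quotient $\mathrm{End}(\C^2)^{\oplus 2}/\!/GL_2$ as affine $5$-space.
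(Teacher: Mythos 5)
Your argument is correct and is essentially the paper's own proof: the paper simply refers to \cite[Theorem 4.6]{D}, and Druel's proof there is exactly this Luna \'etale-slice analysis at polystable points, handling the stable locus via $\Ext^1(E,E)\cong\C^5$ and the strictly semistable locus via the two local models $\C^4\times\bigl((\C\oplus\C)/\!/\C^*\bigr)$ for $I_{l_1}\oplus I_{l_2}$ with $l_1\neq l_2$ and $\mathrm{End}(\C^2)^{\oplus 2}/\!/GL_2\cong\C^5$ for $I_l^{\oplus 2}$. The inputs you invoke --- smoothness of $Q^{ss}_c$, the vanishing of $\Ext^2$, and the dimension counts --- are precisely what Lemmas \ref{smooth1}, \ref{smooth 2} and \ref{smooth3} of the paper establish in order to make that argument transfer to $V_4$.
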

\begin{proof}
    See \cite[Theorem 4.6]{D}. Let $x\in Q^{ss}_c$ and $E$ be the corresponding sheaf. Let $Q^{s}_c\subset Q_c$ be the set of stable  sheaves and $M^{s}$ be the moduli space of stable sheaves. The scheme $Q^{s}$ is a principal $G$-space over $M^{s}$ and hence $M^{s}$ is smooth by the above lemma. It remains to study the case when $E=I_{l_1}\oplus I_{l_2}$ where $l_1,l_2$ are two lines in $V_4$. The orbit $O(x)$ of $x$ under $G$ is closed. The stabilizer $G_x$ of $x$ is a reductive group and there exists an affine subscheme  $W\subset Q^{ss}_c$ containing $x$ which is locally closed and stable under $G_x$ so that the morphism $W//G_x\to Q^{ss}_c//G$ is \'etale(\cite{L}). Let $(W, x)$ be the germ from $W$ to $x$ and let $F$ be the restriction to $X\times (W, x)$ of the tautological quotient on $X \times Q$. Then $((W,x),F)$ is a space of versal deformation for the sheaf $E$.(\cite{Og} Proposition 1.2.3). The germ $W$ is then smooth at $x$ by Lemma \ref{smooth 2}. Since the morphism $W//G_x\to Q^{ss}_c//G$ is \'etale, it suffices to show the quotient $W//G_x$ is smooth at $[x]$. Now there exists a $G_x$ linear morphism $(W,x)\to (T_xW,0)$ \'etale at $x$ so that the induced morphism $W//G_x\to T_xW//G_x$ is \'etale at $[x]$(\cite{L}). It is therefore sufficient to prove that the quotient $T_xW//G_x$ is smooth at 0.\
    
    Suppose $l_1$ and $l_2$ are distinct. The tangent space $T_xW=\Ext^1(E,E)$ is of dimension $6$ and $G_x=G_m\times G_m$ acts on the space by formula (\cite{Og} Lemma 1.4.16)
    \begin{align*}
        (t,t')\cdot(\sum_{i,j}e_{i,j})=e_{1,1}+t/t'e_{1,2}+t'/te_{2,1}+e_{2,2}.
    \end{align*}
    It is easy to verify the quotient $T_xW//G_x$ is the affine space $\mathbb{A}^5$ and in particular smooth at $0$.\
    
    Suppose $l_1$ and $l_2$ are the same and use $l$ to denote the line. The tangent space $\Ext^1(E,E)$ is then of dimension $8$ and $G_x=\mathrm{PGL}(2)$. Let $T=\Ext^1(I_l,I_l)$ and let $U$ be a vector space of dimension $2$. The group $G_x$ acts on $T_xW=T\otimes \mathrm{End}(U)$ by conjugation on $\mathrm{End}(U)$(\cite{Og} Lemma 1.4.6) The quotient $T_x//G_x$ is then isomorphic to $\mathbb{A}^5$(\cite{La}, III, case 2) and in particular smooth at $0$.
\end{proof}
We now construct a morphism from $M_{V_4}$ to $M_C$ ,the moduli space of semistable vector bundles of rank $2$ and degree $0$ on $C$. $M_{V_4}$ is the GIT quotient  $Q^{ss}_c//G$. Let $\mathcal{E}$ be a universal family on $Q^{ss}_c\times V_4$. For any $t\in Q^{ss}_c$, by Lemma \ref{rep} and Lemma \ref{ss}, the map
\begin{align*}
    \Psi:Q^{ss}_c&\to M_C\\
    t&\mapsto [\Phi^*(\mathcal{E}_t)[-1]]
\end{align*}
is well defined. Since $\Phi^*(-)[-1]$ is Fourier-Mukai, $\Psi$ is algebraic. To see that this morphism is invariant under $G$, it suffices to check that $\Psi(t)=\Psi(t_0)$ for any $t$ representing a strictly semistable sheaf $\mathcal{E}_t$, an extension of $I_{l_1}$ and $I_{l_2}$, with $t_0$ representing $I_{l_1}\oplus I_{l_2}$. Applying the functor $\Phi^*(-)[-1]$ to the short exact sequence
\begin{align*}
    0\to I_{l_1}\to E\to I_{l_2}\to 0
\end{align*}
we obtain 
\begin{align*}
    0\to \Phi^*(I_{l_1})[-1]\to \Phi^*(E)[-1]\to \Phi^*(I_{l_2})[-1]\to 0
\end{align*}
Recall that $\Phi^*(I_{l_i})[-1]$ are line bundles of degree $0$ thus $\Phi^*(E)[-1]$ lies in the $S$-equivalence class of $\Phi^*(I_{l_1})[-1]\oplus\Phi^*(I_{l_2})[-1]$.
As a result $\Psi$ descends to a morphism $\psi:M_{V_4}\to M_C$.
\begin{thm}
    $\psi\colon M_{V_4}\to M_C$ is an isomorphism. As a result, the moduli space of semistable rank $2$ sheaves with Chern classes $c_1=0$, $c_2=2$  and $c_3=0$ on $V_4$ is a projective bundle over the Jacobian of $C$.
\end{thm}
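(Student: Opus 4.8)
The plan is to prove that $\psi$ is a bijection on closed points and then invoke the classical fact that over $\C$ a bijective proper morphism onto a normal variety is an isomorphism; the desired description of $M^{inst}$ as a projective bundle then follows from \cite[Theorem 7.3]{NR}. Both $M^{inst}$ and $M$ are smooth and projective (by Theorem \ref{smooth} and the Corollary to \cite[Theorem 7.3]{NR}), which is what makes this strategy available.

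For surjectivity I would \emph{not} try to prove directly that $\Phi(\mathcal{F})[1]$ is a sheaf for an arbitrary stable bundle $\mathcal{F}$ on $C$; this is the delicate point, since the fibres of $p_{V_4}\colon C\times V_4\to V_4$ are curves, so a priori $\Phi(\mathcal{F})[1]$ is only a two-term complex in degrees $-1,0$, and the vanishing $H^0(C,\mathcal{F}\otimes\mathcal{S}_y)=0$ needed to kill the $(-1)$-term can fail. Instead I would argue by properness: by the Remark following \cite[Theorem 5.10]{Ku2}, the open subscheme $MI_2(V_4)\subset M^{inst}$ of minimal instanton \emph{bundles} is nonempty and is carried by $\psi$ onto an open subscheme $U$ of $M$; since $M$ is irreducible (a projective bundle over $\mathrm{Pic}^0(C)$), $U$ is dense, so $\psi$ is dominant. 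As $\psi$ is proper it is closed, and a closed dominant morphism is surjective. (A posteriori this shows $\Phi(\mathcal{F})[1]$ is genuinely an instanton sheaf for every stable $\mathcal{F}$, since $\Phi$ is an equivalence.)

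For injectivity, suppose $\psi([E_1])=\psi([E_2])$. If $E_1$, hence also $E_2$ by Proposition \ref{ss}, is stable, then $\Phi^*(E_1)[-1]\cong\Phi^*(E_2)[-1]$, and applying the equivalence $\Phi(-)[1]$ gives $E_1\cong E_2$. If $E_i$ is strictly semistable, then by the classification underlying Theorem \ref{rep} it is an extension of two ideal sheaves of lines $I_{l_1^{(i)}},I_{l_2^{(i)}}$ and is $S$-equivalent to $I_{l_1^{(i)}}\oplus I_{l_2^{(i)}}$, so $\psi([E_i])$ is the $S$-equivalence class of $G(l_1^{(i)})\oplus G(l_2^{(i)})$, where $G$ is the isomorphism of Theorem \ref{ideal}. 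Equality of these classes in $M$ forces the unordered pairs $\{l_1^{(1)},l_2^{(1)}\}$ and $\{l_1^{(2)},l_2^{(2)}\}$ to coincide, whence $[E_1]=[E_2]$ in $M^{inst}$. Thus $\psi$ is bijective; one may also note that on the stable locus $\psi$ is étale, since $\Phi$ being an equivalence and $\mathcal{B}_{V_4}$ admissible give $\Ext^1_{V_4}(E,E)\cong\Ext^1_C(\mathcal{F},\mathcal{F})$, matching tangent spaces.

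Finally, $\psi$ is bijective and proper, hence finite; since $M$ is irreducible and smooth (so normal) and we work over $\C$, it follows that $M^{inst}$ is connected, hence irreducible by Theorem \ref{smooth}, and that $\psi$ has degree one, i.e. is birational; a finite birational morphism onto a normal variety is an isomorphism, so $M^{inst}\cong M$. By \cite[Theorem 7.3]{NR}, $M$ is a projective bundle over $\mathrm{Pic}^0(C)$, and since $h^0(J^1,2\Theta)=2^{2}=4$ for the genus $2$ curve $C$ the fibres are $\pP^3$'s, consistent with $\dim M^{inst}=5$. The main obstacle is the surjectivity of $\psi$; routing it through properness rather than an explicit inverse construction is what makes the argument go through.
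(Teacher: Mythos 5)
Your proposal is correct and reaches the same conclusion by the same overall skeleton (injectivity, surjectivity, properness, then a Zariski-type argument over the normal variety $M$), but the surjectivity step is genuinely different from the paper's. The paper first upgrades $\psi$ from proper and quasi-finite to finite, so that $\psi(M^{inst})$ is a closed subvariety of dimension $5=\dim M^{inst}$ (using Theorem \ref{smooth}), which forces it to be all of the irreducible $5$-dimensional $M$; this is self-contained given the $\Ext$-computations and needs only that $M^{inst}\neq\emptyset$ (clear from extensions of ideal sheaves of lines). You instead argue that $\psi(MI_2(V_4))$ is a dense open subset of $M$ and that the closed image of a proper dominant map is everything; this works, but it leans on two inputs the paper never establishes explicitly: that $MI_2(V_4)$ is \emph{nonempty} (existence of minimal instanton bundles on $V_4$, which is true but is imported from \cite{Fa}, \cite{Ku2} rather than proved here) and that its image in $M$ is genuinely \emph{open} (the Remark after Kuznetsov's theorem asserts an inclusion $MI_2(V_4)\subset M$; openness requires knowing the defining conditions, including $\mathcal{F}^*\simeq\tau^*\mathcal{F}$, cut out an open locus). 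The paper's dimension count sidesteps both issues, so if you keep your route you should cite the existence result explicitly. Your endgame also differs mildly: you get birationality directly from bijectivity in characteristic zero and invoke ``finite birational onto normal is an isomorphism,'' whereas the paper first shows $\psi$ is \'etale, hence an open immersion, on the stable locus and then applies the same Zariski-type conclusion; both are valid since $M$ is smooth. Your injectivity argument and the $\pP^3$-bundle identification via \cite[Theorem 7.3]{NR} match the paper's.
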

\begin{proof}
    Since both $M_{V_4}$ and $M_C$ are projective, $\psi$ is proper. We claim $\psi$ is injective. Let $\psi([E_1])=\psi([E_2])$. By Proposition \ref{ss}, either both $E_i$ are stable or both $E_i$ are strictly semistable. If $E_i$ are stable, then $\psi([E_1])=\psi([E_2])$ implies $E_1\simeq E_2$, i.e. $[E_1]=[E_2]$. If $E_i$ are strictly semistable, the injectivity follows from Theorem \ref{ideal}. 
    
    Any proper quasi-finite morphism is finite, so $\psi$ is a finite morphism. Thus the image $\psi(M_{V_4})$ has dimension $5$, which must be all of $M_C$. So $\psi$ is surjective. Since $\psi$ is injective and $M_C$ is integral, we see that $M_{V_4}$ must be connected.
    Along with Theorem \ref{smooth}, we know that $M_{V_4}$ is a smooth variety.  
    
    Let $[E]\in M_{V_4}$ be a stable point, then the tangent space at $[E]$ is given by $\Ext^1(E,E)$. The tangent space at $\psi(E)$ (which is also stable) is given by $\Ext^1(\Phi^*(E)[-1],\Phi^*(E)[-1])$.
    Since $\Phi^*:\mathcal{B}_{V_4}\to\mathcal{D}^b(C)$ is an equivalence, $\psi$ induces isomorphism between tangent spaces, so $\psi$ is \'etale when restricted to the open stable locus. Since $\psi$ is also injective and we are working over $\C$, $\psi$ is an open immersion over the stable locus (See for example Stack Project $40.14$). Now $\psi$ is a bijective birational proper morphism, it has to be an isomorphism.

\end{proof}

\end{document}